\theoremstyle{plain}
\newtheorem{theorem}{Theorem}[section]
\newtheorem{lemma}[theorem]{Lemma}
\newtheorem{corollary}[theorem]{Corollary}
\newtheorem{remark}[theorem]{Remark}
\theoremstyle{definition}
\newtheorem{definition}[theorem]{Definition}
\newcommand\scalemath[2]{\scalebox{#1}{\mbox{\ensuremath{\displaystyle #2}}}}
\title{Brownian motion and stochastic areas on complex partial flag manifolds with blocks of equal size}
\author{Teije Kuijper\footnote{t.kuijper@math.au.dk}}
\begin{document}

\maketitle

\begin{abstract}
    We construct a Brownian motion on complex partial flag manifolds with blocks of equal size as a matrix-valued diffusion from a Brownian motion on the unitary group.
    This construction leads to an explicit expression for the characteristic function of the joint distribution of the stochastic areas on these manifolds.
    The limit law of these stochastic areas is shown to be a multivariate Cauchy distribution with independent and identically distributed entries.
    By relating the area functionals on flag manifolds to the winding functional on the complex Stiefel manifold, we establish new results about simultaneous Brownian windings on the complex Stiefel manifold.
    To establish these results, this work introduces a new family of diffusions, which generalise both the Jacobi processes on the simplex and the Hermitian Jacobi processes.
\end{abstract}

\tableofcontents

\section*{Introduction}
The stochastic area functional of a Brownian motion was first studied in the complex plane by Paul Lévy in \cite{Levy1951}.
In recent years a general theory has been developed, see \cite{Baudoin2024-is} for a survey, which has deep connections to many different areas of geometry and probability theory, in particular random matrix theory.
Especially the setting of flag manifolds has proven to be fruitful.

Flag manifolds play important roles in many areas of mathematics, see \cite{baudoin2025fullflag} and the references therein.
Among the partial flag manifolds, the ones with blocks of equal size have the most symmetry and are particularly suitable for calculations, because one is able to work with block matrices.
It can for example be shown that they are precisely the partial flag manifolds for which a suitably defined radial process is a diffusion.

The complex partial flag manifolds with blocks of equal size can be identified with the homogeneous spaces
\begin{align*}
    F_{m,2m,\dots ,km}(\mathbb{C}^{n}) =\frac{\mathbf{U}(n)}{\mathbf{U}(m)^{k+1}}
\end{align*}
for $k,m\in\mathbb{N}$, where $n:=(k+1)m$ and $\mathbf{U}(n)$ is the unitary group.
Note that we recover the full flag manifolds when $m=1$.
The partial flag manifolds have a Kähler structure \cite{Borel1954}, which is essential for investigating the Brownian motion on $F_{m,2m,\dots ,km}(\mathbb{C}^n)$, its associated stochastic area functionals, its associated winding functionals and its connection to Brownian winding on the complex Stiefel manifolds.

The squared radial processes are defined as the processes
\begin{align*}
    \Lambda_j (t):=(I_m+w^*_j(t)w_j(t))^{-1}
\end{align*}
for $1\leq j\leq k+1$ and $t\geq 0$, where $w(t)=(w_1(t),\dots ,w_{k+1}(t))$ is a Brownian motion on $F_{m,2m,\dots ,km}(\mathbb{C}^n)$ given in affine coordinates.
The stochastic area processes and the squared radial processes together form a diffusion.
The squared radial processes are therefore studied in detail and also form a diffusion by themselves.
This work introduces a $(k+1)$-parameter family of diffusions, the Jacobi processes on the simplex of Hermitian matrices, of which the squared radial process is a special case.
These processes generalise simultaneously the Hermitian Jacobi processes \cite[\textsection 9.4.3]{doume2005} and the Jacobi processes on the simplex \cite[\textsection 2.2]{baudoin2025fullflag}.
For as far as we are aware this family of diffusion has not appeared in the literature before.

The Jacobi processes on the simplex of Hermitian matrices are related to discrete \emph{random Positive Operator-Valued Measures (random POVMs)}.
In quantum theory, POVMs can be used to describe observables, see for example \cite[Chapter 3]{heinosaari2011mathematical}.
Random POVMs play a key role in quantum information theory \cite{Heinosaari2020}.
This viewpoint will not be explored in this paper.

This paper is part of a larger project, namely to determine the properties of horizontal Brownian motion, the area process and the winding process on the compact homogeneous symplectic manifolds, in particular the heat kernel, characteristic function and the asymptotics.
All compact homogeneous symplectic manifolds are Kähler, and turn out to be precisely the complex flag manifolds \cite{Borel1954}.
Similar studies have already been carried out for the complex projective spaces \cite{Baudoin2011TheSH, baudoin2024lawindexbrownianloops}, the complex Grassmannians \cite{Baudoin2021} and the complex full flag manifolds \cite{baudoin2025fullflag}.
The setup of this paper is very similar to \cite{baudoin2025fullflag} and generalises the results of that paper on the full flag manifolds to a more general class of partial flag manifolds, namely those with blocks of equal size.

The main novel results of this extension will now be discussed.
In \textsection\ref{subsec:generator-BM-flag}, a Brownian motion on $F_{m,2m,\dots ,km}(\mathbb{C}^n)$ is represented in terms of a unitary Brownian motion.
This leads to the expression of the Laplace--Beltrami operator on $F_{m,2m,\dots ,km}(\mathbb{C}^n)$ in affine coordinates of theorem \ref{thm:Laplace-Beltrami-operator-flags}.
In theorem \ref{thm:horizontal-MB-m} this Brownian motion is lifted to a horizontal Brownian motion on the unitary group in some trivialisation.
In contrast to the full flag manifold, the first coordinate of the Brownian motion in the trivialisation is not only determined by the area processes.

Theorem \ref{thm:Laplace-transform-area-functional-equal-size} expresses the characteristic function of the area processes conditional on the squared radial processes in terms of the heat kernel of the Jacobi processes on the simplex of Hermitian matrices.
This expression is then used in corollary \ref{cor:limit-area-process-on-Fm} to show that the large time limit of the area processes are independent and identically distributed Cauchy random variables of parameter $m(n-m)$.

Definition \ref{def:matrix-Jacobi-process-simplex} introduces the Jacobi operators on the simplex of Hermitian matrices.
A thorough study of these processes is planned for a follow up article.
In theorem \ref{thm:generator-process-of-interest} it is shown that the radial processes of a Brownian motion on $F_{m,2m,\dots ,km}(\mathbb{C}^n)$ form a Jacobi process on the simplex of Hermitian matrices of index $(m/2,\dots ,m/2)$.

In \textsection\ref{sec:canonical-torus-bundle} an explicit construction of the canonical torus bundle over $F_{m,2m,\dots ,km}(\mathbb{C}^n)$ is given in terms of the quotient of Lie groups
\begin{align*}
    P_{m,2m,\dots ,km}(\mathbb{C}^n) =\frac{\mathbf{U}(n)}{\mathbf{SU}(m)^{k+1}} .
\end{align*}
Note that this is only non-trivial for $m>1$.
Using a trivialisation of this torus bundle, the winding forms and processes are defined in definition \ref{def:winding-form-blocks-of-equal-size} as the differential of the first coordinates of this trivialisation.
In corollary \ref{cor:connection-process-BM}, the winding processes are shown to be related to the area processes and their large time asymptotics are seen to be the same.
Furthermore, the winding processes are shown to be equal in distribution to the complex argument of the determinants of the blocks in the last row of a unitary Brownian motion.

By relating these winding processes to simultaneous winding on the Stiefel manifolds $V_{n,m}(\mathbb{C})$, with $n$ as above, the same large time asymptotics are shown to hold for the simultaneous winding as well in theorem \ref{thm:simulteneous-winding-Stiefel-manifold}.
This result generalises both \cite[\textsection 5]{baudoin2025fullflag} and \cite[\textsection 8.3.3]{Baudoin2024-is}.
Furthermore, a new interpretation of the latter result is given.

The paper is organised as follows: section \ref{chap:preliminaries} briefly reviews complex partial flag manifolds and introduces the canonical torus bundle over the partial flag manifold with blocks of equal size; section \ref{chap:Brownian-motion-F} constructs a Brownian motion on the partial flag manifolds with blocks of equal size as a projection of a unitary Brownian motion and analyses its radial dynamics; section \ref{chap:stochastic-area} defines and constructs horizontal Brownian motion on the unitary group with respect to the fibration defining the partial flag manifolds with blocks of equal size, furthermore it introduces the stochastic area processes, derives their simultaneous characteristic function and determines its asymptotics; and lastly section \ref{chap:simulteneous-winding-Stiefel} applies the previous results to Brownian winding on certain Stiefel manifolds.

\section{Preliminaries}\label{chap:preliminaries}
In this section the construction of the geometric object of interest, specific partial flag manifolds are recalled and the canonical torus bundle is constructed over them.
The canonical torus bundle places our investigation into the general context discussed in \cite[\textsection 3.5]{Baudoin2024-is}.

\subsection{Complex flag manifolds}\label{sec:complex-flag-manifold}
In this section we review the geometric structure of certain complex flag manifolds.
This subsection closely follows \cite[\textsection 2.1]{baudoin2025fullflag}, where the same constructions are carried out for the special case of the full flag manifold.

\begin{definition}
    A \emph{flag} of $\mathbb{C}^n$ is a sequence
    \begin{align*}
        \{ 0\} =:W_0\subsetneq W_1\subsetneq\dots\subsetneq W_k\subsetneq W_{k+1} :=\mathbb{C}^n
    \end{align*}
    of complex subspaces of $\mathbb{C}^n$.
    The number $k$ is called the \emph{length} of the flag and the $k$-tuple
    \begin{align*}
        (\dim (W_1),\dots ,\dim (W_k))
    \end{align*}
    the \emph{signature} of the flag.
    Furthermore the numbers $(n_1,\dots ,n_k,n_{k+1})$, with $n_{j}:=\dim (W_j) -\dim (W_{j-1})$, are called the \emph{block sizes} of the flag.
\end{definition}

\begin{definition}
    The \emph{(complex) partial flag manifold $F_{m,2m\dots ,km}(\mathbb{C}^{(k+1)m})$ of length $k$ with blocks of size $m$} is the collection of flags of $\mathbb{C}^{(k+1)m}$ of signature $(m,2m\dots ,km)$.
\end{definition}

From now on we will fix $m,k\in\mathbb{Z}_{\geq 1}$ and define $n:=(k+1)m$.
Similarly to \cite[\textsection 2.1]{baudoin2025fullflag} we can make the identification
\begin{align}\label{eq:fundamental-identification}
    F_{m,2m,\dots ,km}(\mathbb{C}^{n})\cong\mathrm{GL}_{n}(\mathbb{C})\backslash T^m_n(\mathbb{C}),
\end{align}
where $T^m_n(\mathbb{C})$ is the group of matrices $M\in\mathrm{GL}_n(\mathbb{C})$ such that $M_{ij} =0$ when $i>\ell m$ whenever $(\ell -1)m < j\leq \ell m$.
If we write the matrix in terms of blocks of size $m\times m$, the matrices in $T_n^m (\mathbb{C})$ will be upper triangular.
In particular, the complex dimension is given by
\begin{align*}
    n^2 -m^2\sum_{j=1}^{k+1}j =n^2 -m^2\frac{(k+2)(k+1)}{2} =\frac{m^2 k(k+1)}{2}
    =\frac{n(n-m)}{2} .
\end{align*}

The identification \eqref{eq:fundamental-identification} defines a smooth structure on $F_{m,2m,\dots ,km}(\mathbb{C}^n)$, which is the unique one making the canonical projection into a smooth submersion.
This identification can be interpreted by seeing the first $jm$ columns of an invertible matrix as a basis for $W_j$.
Two invertible matrices then give rise to the same flag if they differ by right multiplication of an upper triangular block matrix in $T_n^m(\mathbb{C})$.

Similarly, if we restrict ourselves to orthonormal bases we obtain the identification, which is the compact realisation of \eqref{eq:fundamental-identification},
\begin{align}\label{eq:fibration-Um}
    F_{m,2m,\dots ,km}(\mathbb{C}^{n})\cong\mathbf{U}(n)/\mathbf{U}(m)^{k+1},
\end{align}
where
\begin{align*}
    \mathbf{U}(n):=\{ M\in\mathbb{C}^{n\times n}\mid M^*M=I_n\}
\end{align*}
is the \emph{unitary group} and $\mathbf{U}(m)^{k+1}$ is identified with the set of diagonal block matrices in $\mathbf{U}(n)$.
The Riemannian metric on $F_{m,2m,\dots ,km}(\mathbb{C}^n)$ is the unique one making the canonical projection $\pi :\mathbf{U}(n)\rightarrow\mathbf{U}(n)/\mathbf{U}(m)^{k+1}$ into a Riemannian submersion, where $\mathbf{U}(n)$ is equipped with its bi-invariant metric induced by the Killing form.

Recall that the Grassmannian $Gr_{n,m}(\mathbb{C})$ is the space of all linear subspaces in $\mathbb{C}^n$ of dimension $m$.
Analogues to the full flag manifold, one can see $F_{m,2m,\dots ,km}(\mathbb{C}^n)$ as an algebraic subvariety of $Gr_{n,m}(\mathbb{C})\times\dots\times Gr_{n,m}(\mathbb{C})$ with the help of the Riemannian immersion
\begin{align*}
    (W_1,\dots ,W_k)\mapsto (W_1, W_2\cap W_1^{\perp} ,\dots ,W_{k}\cap W_{k-1}^{\perp} ,W_{k}^{\perp}) .
\end{align*}

We will parametrise (a dense subset of) $F_{m,2m,\dots ,km}(\mathbb{C}^n)$ using \emph{local affine coordinates} on the domain
\begin{align*}
    \mathcal{D}_m :=\scalemath{0.96}{\left\{\begin{pmatrix} A_{11} &\dots &A_{1(k+1)} \\ \vdots & \ddots & \vdots \\ A_{(k+1)1} & \dots & A_{(k+1)(k+1)}\end{pmatrix}\in\mathbf{U}(n)\Biggm| A_{\ell j}\in\mathbb{C}^{m\times m} ,\det (A_{(k+1)j})\neq 0, 1\leq \ell ,j\leq k+1\right\}}
\end{align*}
by the smooth map $p:\mathcal{D}_{m}\rightarrow\mathbb{C}^{(n-m)\times m}\times\dots\times\mathbb{C}^{(n-m)\times m}$ defined by
\begin{align}\label{eq:Riemannian-submersion-coordinates}
    p\left(\begin{pmatrix} A_{11} &\dots &A_{1(k+1)} \\ \vdots & \ddots & \vdots \\ A_{(k+1)1} & \dots & A_{(k+1)(k+1)}\end{pmatrix}\right) :=\left(\begin{pmatrix} A_{11} \\ \vdots \\ A_{k1}\end{pmatrix}A_{(k+1)1}^{-1} ,\dots ,\begin{pmatrix} A_{1(k+1)} \\ \vdots \\ A_{k(k+1)}\end{pmatrix}A_{(k+1)(k+1)}^{-1}\right) .
\end{align}
It is not difficult to see that for every $M, N\in\mathcal{D}_{m}$, $p(M)=p(N)$ is equivalent to $N=Mg$ for some $g\in \mathbf{U}(m)^{k+1}$ (any two such matrices differ by a diagonal unitary block matrix).
Since $p$ is a submersion from $\mathcal{D}_m$ onto its image $\mathcal{O}_m :=p(\mathcal{D}_m)$, one deduces that there exists a diffeomorphism $\Psi$ between an open dense subset of $\mathbf{U}(n)/\mathbf{U}(m)^{k+1}$ and $\mathcal{O}_m$ such that $\Psi\circ\pi =p$.
This gives rise to a local set of coordinates on $F_{m,2m,\dots ,km}(\mathbb{C}^n)$.
These coordinates are compatible with the metric in the sense that $p$ is a Riemannian submersion, which implies that $\Psi$ is an isometry.
These coordinates are only implicit and we will mostly work with the related parametrisation $w_{j}:=(u_{1 j}u_{(k+1)j}^{-1} ,\dots ,u_{k j}u_{(k+1)j}^{-1})$ for $U=(u_{\ell j})_{1\leq\ell ,j\leq n}\in\mathbf{U}(n)$ and $1\leq j\leq k+1$ instead.

Note that $\mathcal{O}_m$ can explicitly be described as
\begin{align*}
    \mathcal{O}_m =\{ (w_1,\dots ,w_{k+1})\in\mathbb{C}^{km\times m}\times\dots\times\mathbb{C}^{km\times m}\mid w_{j}^*w_{\ell} =-I_m ,1\leq j<\ell\leq k+1\} ,
\end{align*}
which yields a parametrisation of a dense open set of $F_{m,2m,\dots ,km}(\mathbb{C}^{n})$ by the algebraic manifold $\mathcal{O}_m$.
This parametrisation and the corresponding block notation for $\mathbf{U}(n)$ will be used in the subsequent sections, unless noted otherwise. 

\subsection{The canonical torus bundle}\label{sec:canonical-torus-bundle}
For $m>1$, the fibration \eqref{eq:fibration-Um} is not a torus bundle over the flag manifold $F_{m,2m,\dots ,km}(\mathbb{C}^n)$.
To define the winding forms over $F_{m,2m,\dots ,km}(\mathbb{C}^n)$, one can construct a torus bundle over $F_{m,2m,\dots ,km}(\mathbb{C}^n)$.
The construction can straightforwardly be generalised to general partial flag manifolds.

The \emph{canonical torus bundle} over $F_{m,2m,\dots ,km}(\mathbb{C}^n)$ is defined as the quotient space
\begin{align*}
    P_{m,2m,\dots ,km}(\mathbb{C}^n) :=\mathbf{U} (n)/\mathbf{SU}(m)^{k+1} ,
\end{align*}
where
\begin{align*}
    \mathbf{SU}(m):=\{ U\in\mathbf{U}(m)\mid\det (U)=1\}
\end{align*}
is the \emph{special unitary group}.

The inclusion of Lie groups $\mathbf{SU}(m)^{k+1}\subseteq \mathbf{U}(m)^{k+1}\subseteq\mathbf{U}(n)$ gives rise to the following commutative diagram of totally geodesic fibrations
\begin{equation}\label{eq:cd-canonical-torus-bundle}\begin{tikzcd}
	& {\mathbf{SU}(m)^{k+1}} \\
	{\mathbf{U}(m)^{k+1}} & {\mathbf{U}(n)} \\
	{\mathbf{U}(1)^{k+1}} & {P_{m,2m,\dots ,km}(\mathbb{C}^n)} & {F_{m,2m,\dots ,km}(\mathbb{C}^n)}
	\arrow[from=1-2, to=2-1]
	\arrow[from=1-2, to=2-2]
	\arrow[from=2-1, to=2-2]
	\arrow[from=2-1, to=3-1]
	\arrow["{\pi_P}", from=2-2, to=3-2]
	\arrow["{\pi_{F}}", from=2-2, to=3-3]
	\arrow[from=3-1, to=3-2]
	\arrow["\pi", from=3-2, to=3-3]
\end{tikzcd} .\end{equation}
Explicitly, the action of $\mathbf{U}(1)^{k+1}$ on $P_{m,2m,\dots ,km}(\mathbb{C}^n)$ is obtained by projecting the action on $\mathbf{U}(n)$ given by right multiplication down to $P_{m,2m,\dots ,km}(\mathbb{C}^n)$, where we see $\mathbf{U}(1)^{k+1}$ as the set of all diagonal $(k+1)\times (k+1)$ unitary matrices tensored with $I_m$.

The vertical space $\mathcal{V}_{F} :=\ker (d\pi_F)$ of the fibration $\mathbf{U}(m)^{k+1}\rightarrow\mathbf{U}(n)\rightarrow F_{m,2m,\dots ,km}(\mathbb{C}^n)$ is given by $(\mathcal{V}_{F})_M=M\mathfrak{u}(m)^{k+1}$.
Its orthogonal complement, the horizontal space $\mathcal{H}_F$ of the fibration, by
\begin{align*}
    (\mathcal{H}_{F})_M=&\left\{ N\in T_{M}\mathbf{U}(n)\mid \mathrm{Tr}(N^*MA) =0\textrm{ for all } A\in\mathfrak{u}(m)^{k+1}\right\} \\
    =&\left\{ N\in T_{M}\mathbf{U}(n)\biggm| \sum_{\ell =1}^{k+1}(N_{\ell j})^*M_{\ell j}=0\textrm{ for }1\leq j\leq k+1\right\} .
\end{align*}
We can identify the tangent space $TF_{m,2m,\dots ,km}(\mathbb{C}^n)$ with $\mathcal{H}_F$ with the help of the map $d\pi_F$.
Similarly, $T_MP_{m,2m,\dots ,km}(\mathbb{C}^n)$ can be identified with the orthogonal complement of $(\mathcal{V}_P)_M =M\mathfrak{su}(m)^{k+1}$ using $d\pi_P$.
Using these identifications the vertical space $\mathcal{V}$ of the fibration
\begin{align}\label{eq:fibration-canonical-torus-bundle}
    \mathbf{U}(1)^{k+1}\rightarrow P_{m,2m,\dots ,km}(\mathbb{C}^{n})\rightarrow F_{m,2m,\dots ,km}(\mathbb{C}^n) ,
\end{align}
is given by $\mathcal{V}_M =iM_1\mathbb{R}\oplus\dots\oplus iM_{k+1}\mathbb{R}$, where $M_j$ denotes the $j^{\mathrm{th}}$-column of $M$ in block notation.
To see this note that the fibres are $(k+1)$-dimensional and that $iM_1\oplus 0\oplus\dots\oplus 0,\dots , 0\oplus\dots 0\oplus iM_{k+1}$ lie in both $T_MP_{m,2m,\dots ,km}(\mathbb{C}^n)$ and the kernel of $d\pi$.

We will consider the following vector fields on $P_{m,2m,\dots ,km}(\mathbb{C})$ given at a point $[M]$ by
\begin{align*}
    \frac{\partial f}{\partial\theta_j} ([M]):=\frac{d}{dt}\bigg|_{t=0} f\left(\left[ Me^{it\frac{E_{jj}}{m}}\right]\right) ,
\end{align*}
where $E_{jj}:=(\delta_{(j,j)}(p,q) I_m)_{1\leq p,q\leq k+1}$.
This is well defined because the commutator of two elements in $\mathbf{U}(n)$ lies in $\mathbf{SU}(n)$.
Note that the vector fields $\frac{\partial}{\partial\theta_j}$ for $1\leq j\leq k+1$ commute and form at any point a basis of the vertical space $\mathcal{V}$.

A local trivialisation of this torus bundle is given by projecting the map
\begin{align}\label{eq:trivialisation-canonical-torus-bundle}
\begin{split}
    \mathbb{R}^{k+1}\times\mathcal{O}_m & \rightarrow\qquad\qquad\qquad\qquad\qquad\qquad\qquad\qquad\mathcal{D}_m\\
    (\theta\quad ,\quad w)&\mapsto\scalemath{0.95}{
    \begin{pmatrix}
        e^{i\frac{\theta_1}{m} }w_{11}(I_m+w_{11}^*w_{11})^{-\frac{1}{2}} & \dots & e^{i\frac{\theta_{k+1}}{m} }w_{1(k+1)}(I_m+w_{(k+1)(k+1)}^*w_{(k+1)(k+1)})^{-\frac{1}{2}} \\
        \vdots & \ddots & \vdots \\
        e^{i\frac{\theta_1}{m} }w_{k1}(I_m+w_{11}^*w_{11})^{-\frac{1}{2}} &\dots &e^{i\frac{\theta_{k+1}}{m} }w_{k(k+1)}(I_m+w_{(k+1)(k+1)}^*w_{(k+1)(k+1)})^{-\frac{1}{2}} \\
        e^{i\frac{\theta_1}{m} }(I_m+w_{11}^*w_{11})^{-\frac{1}{2}} &\dots & e^{i\frac{\theta_{k+1}}{m} }(I_m+w_{(k+1)(k+1)}^*w_{(k+1)(k+1)})^{-\frac{1}{2}}
    \end{pmatrix}} ,
    \end{split}
\end{align}
down to a dense open subset of $P_{m,2m,\dots ,km}(\mathbb{C}^n)$.

\begin{definition}\label{def:winding-form-blocks-of-equal-size}
    The one-forms $(d\theta_1 ,\dots ,d\theta_{k+1})$ on $\pi_P (\mathcal{D}_m)$ are called the \emph{winding forms} around $P_{m,2m,\dots ,km}(\mathbb{C}^n)\backslash\pi_P (\mathcal{D}_m)$.
\end{definition}

\begin{lemma}\label{lemma:connection-form-canonical-torus-bundle}
    The $\mathbb{R}^{k+1}$-valued one form $\eta =(\mathrm{Tr} (\omega_1),\dots ,\mathrm{Tr} (\omega_{k+1}))$ on $\mathbf{U}(n)$, with
    \begin{align*}
        \omega_j :=\frac{i}{2}\sum_{\ell =1}^{k+1}( M_{\ell j}^*dM_{\ell j} -dM^*_{\ell j}M_{\ell j} )\textrm{ for } 1\leq j\leq k+1 ,
    \end{align*}
    is the connection form of the canonical torus bundle
    \begin{align*}
        \mathbf{U}(1)^{k+1}\rightarrow P_{m,2m,\dots ,km} (\mathbb{C}^n)\rightarrow F_{m,2m,\dots ,km}(\mathbb{C}^n) .
    \end{align*}
\end{lemma}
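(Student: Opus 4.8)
The plan is to verify the two defining properties of a principal connection form: first, that $\eta$ reproduces the generators of the $\mathbf{U}(1)^{k+1}$-action on the vertical space $\mathcal{V}$; and second, that $\eta$ is equivariant (here, since the structure group is abelian, simply invariant) under the torus action. Because the fibration factors through $\mathbf{U}(n)\to P_{m,2m,\dots,km}(\mathbb{C}^n)$, it suffices to carry out both computations at the level of the one-forms $\omega_j$ on $\mathbf{U}(n)$, and then descend. I would begin by recording that on $\mathbf{U}(n)$ a tangent vector at $M$ is of the form $N=MX$ with $X\in\mathfrak{u}(n)$ (equivalently $M^*N+N^*M=0$), and compute $\omega_j(N) = \tfrac{i}{2}\sum_{\ell}\big( (M_{\ell j})^*N_{\ell j} - (N_{\ell j})^*M_{\ell j}\big) = i\,\mathrm{Im}\,\mathrm{Tr}\big(\sum_\ell (M_{\ell j})^* N_{\ell j}\big)$, so that $\mathrm{Tr}(\omega_j)$ is a real linear functional of $N$; this makes the $\mathbb{R}^{k+1}$-valued nature of $\eta$ manifest.

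For the reproducing property, I would evaluate $\eta$ on the fundamental vector fields $\partial/\partial\theta_j$. The vertical vector associated to $E_{jj}/m$ is, at $M$, the column vector $iM_j/m$ sitting in the $j$-th block column (all other block columns zero), i.e. $N_{\ell j'} = \tfrac{i}{m}\delta_{jj'}M_{\ell j}$. Plugging into $\omega_{j'}$: the only nonzero contribution is at $j'=j$, where $\sum_\ell (M_{\ell j})^*(\tfrac{i}{m}M_{\ell j}) = \tfrac{i}{m}\sum_\ell (M_{\ell j})^*M_{\ell j}$, and since $M$ is unitary the block-column orthonormality gives $\sum_\ell (M_{\ell j})^*M_{\ell j} = I_m$. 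Hence $\omega_j(\partial/\partial\theta_j) = \tfrac{i}{2}\big(\tfrac{i}{m}I_m - \overline{(\tfrac{i}{m}I_m)}\big)$... more carefully, $\tfrac{i}{2}\big(\tfrac{i}{m}I_m - (-\tfrac{i}{m})I_m\big) = \tfrac{i}{2}\cdot\tfrac{2i}{m}I_m = -\tfrac{1}{m}I_m$, giving $\mathrm{Tr}(\omega_j(\partial/\partial\theta_j)) = -1$ (up to the sign convention for the Lie-algebra identification of $\mathbf{U}(1)^{k+1}$, which I would fix at the outset so that this comes out to the identity generator); for $j'\neq j$ the trace vanishes. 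One must also check that $\eta$ kills the full horizontal space $\mathcal{H}_F$: this is immediate from the second displayed description of $(\mathcal{H}_F)_M$ in the text, since $N\in\mathcal{H}_F$ means exactly $\sum_\ell (N_{\ell j})^*M_{\ell j}=0$ for every $j$, and $\omega_j(N)$ is $i$ times the imaginary part of the conjugate of this sum. Finally I would note that $\mathcal{H}_F\oplus\mathcal{V}$ spans the relevant tangent space, so $\eta$ is determined by these values — this is the content of $\eta$ being a connection: its kernel is a horizontal distribution complementary to the vertical one, and here that kernel is precisely $\mathcal{H}_F$.

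For invariance under the torus action, I would use the trivialisation \eqref{eq:trivialisation-canonical-torus-bundle}: right multiplication by $\mathrm{diag}(e^{i\phi_1},\dots,e^{i\phi_{k+1}})\otimes I_m$ sends $\theta_j\mapsto\theta_j + m\phi_j$ and fixes $w$, so the block entry $M_{\ell j}$ is multiplied by the scalar $e^{i\phi_j}$, independently of $\ell$. Then $(M_{\ell j})^*dM_{\ell j}$ becomes $e^{-i\phi_j}(M_{\ell j})^*\, d(e^{i\phi_j}M_{\ell j}) = (M_{\ell j})^*dM_{\ell j} + i\,d\phi_j\,(M_{\ell j})^*M_{\ell j}$; summing over $\ell$ and using unitarity of the block column again, the correction in $\mathrm{Tr}(\omega_j)$ is $\mathrm{Tr}\big(i\,d\phi_j\, I_m\big)\cdot(\text{const})$, which is a fixed multiple of $d\phi_j$ — a constant one-form pulled back from the group — and in particular vanishes along the action orbits for fixed group element (the transformation is by a constant, so $d\phi_j=0$ in the relevant sense), showing $\mathrm{Tr}(\omega_j)$ is invariant. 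Combining the two properties establishes that $\eta$ is a connection form, and the normalisation constant $\tfrac{i}{2}$ together with the $1/m$ in the trivialisation exponents is exactly what makes the fundamental-field values come out correctly.

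The main obstacle I anticipate is bookkeeping rather than conceptual: keeping the conventions consistent across three layers — the identification of $\mathrm{Lie}(\mathbf{U}(1)^{k+1})$ with $\mathbb{R}^{k+1}$, the factor of $m$ in the exponents $e^{i\theta_j/m}$ in \eqref{eq:trivialisation-canonical-torus-bundle}, and the factor $\tfrac{i}{2}$ in the definition of $\omega_j$ — so that the reproducing property lands exactly on the identity and not on a scalar multiple of it. A secondary subtlety is verifying that $\omega_j$ genuinely descends from $\mathbf{U}(n)$ to $P_{m,2m,\dots,km}(\mathbb{C}^n)$, i.e. that it annihilates $M\mathfrak{su}(m)^{k+1}$: for $X=(X_1,\dots,X_{k+1})$ with $X_j\in\mathfrak{su}(m)$ the vector $N=MX$ has $N_{\ell j} = \sum_p M_{\ell p}(X_j)_{pj}$ confined to the $j$-th block-column structure, and $\sum_\ell(M_{\ell j})^* N_{\ell j}$ becomes (using block-column orthonormality of $M$) essentially $X_j$ itself, whose imaginary-antisymmetric part has trace zero precisely because $X_j$ is traceless anti-Hermitian — this is where the restriction to $\mathbf{SU}(m)$ blocks, as opposed to $\mathbf{U}(m)$ blocks, is used.
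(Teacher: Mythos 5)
Your proposal is correct and conceptually parallels the paper's argument: both verify the defining properties of a connection form (that $\eta$ reproduces the generators $\partial/\partial\theta_j$ of the $\mathbf{U}(1)^{k+1}$-action, is invariant under it, and has $\mathcal{H}$ as its kernel). The paper shortcuts the groundwork by citing the known fact that $\omega_j$ is the connection form of the Stiefel fibration $\mathbf{U}(m)\to V_{n,m}(\mathbb{C})\to G_{n,m}(\mathbb{C})$, imports the kernel and equivariance properties from that source, and then computes the trace at the fundamental vector fields; you rederive everything from first principles, which is more elementary and also makes visible a point the paper leaves implicit, namely that $\mathrm{Tr}(\omega_j)$ descends to $P_{m,2m,\dots,km}(\mathbb{C}^n)$ precisely because matrices in $\mathfrak{su}(m)$ are traceless. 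You correctly flag that with the stated signs one obtains $\omega_j(\partial/\partial\theta_j)=-\tfrac{1}{m}I_m$, hence $\mathrm{Tr}(\omega_j(\partial/\partial\theta_j))=-1$, whereas the paper's proof asserts $+1$; this is a global sign absorbed in the identification of $\mathfrak{u}(1)^{k+1}$ with $\mathbb{R}^{k+1}$ and changes nothing downstream, but your computation is the careful one. Two small streamlinings: the invariance check is cleaner if you simply observe that for fixed $g\in\mathbf{U}(1)^{k+1}$ one has $R_g^*\omega_j=g_j^{-1}\omega_j g_j$ with $g_j$ a constant scalar block, so the trace is manifestly unchanged (no need to reason about $d\phi_j$); and to complete the descent argument you should also record the $\mathbf{SU}(m)^{k+1}$-invariance of $\mathrm{Tr}(\omega_j)$, which follows by exactly the same conjugation-plus-cyclicity-of-trace observation.
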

\begin{proof}
    The $\mathbf{U}(m)$-valued one-form $\omega_j$ is the connection form of the Stiefel fibration $\mathbf{U}(m)\rightarrow V_{n,m}(\mathbb{C})\rightarrow G_{n,m}(\mathbb{C})$, see \cite[Lemma 8.2.1.]{Baudoin2024-is}, where
    \begin{align*}
        V_{n,m}(\mathbb{C}):=\{ M\in\mathbb{C}^{n\times m}\mid M^*M=I_{m}\}
    \end{align*}
    is the Stiefel manifold.
    Therefore its kernel is the horizontal space of the Stiefel submersion $V_{n,m}(\mathbb{C})\rightarrow G_{n,m}(\mathbb{C})$, it is $\mathbf{U}(m)$-invariant and satisfies $\omega_j (\hat{X}) =X$ for all $X\in\mathfrak{u}(m)$, where
    \begin{align*}
        \hat{X}f(M):=\lim_{t\rightarrow 0}\frac{f(Me^{tX}) -f(M)}{t} .
    \end{align*}
    The latter shows that
    \begin{align*}
        \mathrm{Tr}\left(\omega_j\left(\frac{\partial}{\partial\theta_i}\right)\right) =\mathrm{Tr}\left(\frac{E_{jj}}{m}\right) =1.
    \end{align*}
    It is then easily shown that
    \begin{itemize}
        \item[i)] $g^*\eta =\eta$ for all $g\in\mathbf{U}(1)^{k+1}$;
        \item[ii)] $\mathrm{Tr}\left(\omega_j\left(\frac{\partial}{\partial\theta_i}\right)\right) =\delta_{ij}$;
        \item[iii)] $\ker(\eta ) =\mathcal{H}$.
    \end{itemize}
    From which it follows that $\eta$ is the connection form.
\end{proof}

In the parametrisation \eqref{eq:trivialisation-canonical-torus-bundle}, the connection form admits the following decomposition
\begin{align}\label{eq:fundamental-decomposition-m}
    \eta_j =d\theta_j -\frac{i}{2}\mathrm{Tr}\left(\left( I_m+\sum_{\ell =1}^{k+1}w_{\ell j}^*w_{\ell j}\right)^{-1}\sum_{\ell =1}^{k+1} (dw^*_{\ell j}w_{\ell j} -w^*_{\ell j}dw_{\ell j})\right)
\end{align}
as can be shown by a tedious, but straightforward, calculation.

\begin{remark}\label{rmk:torus-bundle-Boothby-Wang}
    The canonical torus bundle is related to the \emph{Boothby-Wang fibration} \cite{Boothby-Wang1958}.
     Explicitly, the Boothby-Wang fibration can be realised as
    \begin{align*}
        \mathbf{U}(1)\rightarrow \mathbf{U}(n)/\mathbf{S}(\mathbf{U}(m)^{k+1})\rightarrow F_{m,2m,\dots,km}(\mathbb{C}^{n}) ,
    \end{align*}
    where
    \begin{align*}
        \mathbf{S}(\mathbf{U}(m)^{k+1}) :=\{ U\in\mathbf{U}(m)^{k+1}\mid\det (U)=1\} .
    \end{align*}
    The connection form of the Boothby-Wang fibration can then be identified with the sum of the components of the connection form $\eta$ of lemma \ref{lemma:connection-form-canonical-torus-bundle}. 
\end{remark}

\section{Brownian motion on partial flag manifolds}\label{chap:Brownian-motion-F}
In this section we describe Brownian motion on the partial flag manifold $F_{m,2m,\dots ,km} (\mathbb{C}^n)$ in terms of a unitary Brownian motion.
We begin by briefly recalling the unitary Brownian motion and rewriting it in block form.
In the second half of this section we determine the generator of the radial process of the Brownian motion and show that it is a diffusion.
Moreover, the generator of this diffusion is determined and is a special case of what we will call a Jacobi process on the simplex of Hermitian matrices.

\subsection{Unitary Brownian motion}
Recall that the Lie algebra of the unitary group is given by
\begin{align*}
    \mathfrak{u}(n):=\{ A\in\mathbb{C}^{n\times n}\mid A^*=-A\} .
\end{align*}
Let $(A(t))_{t\geq 0}$ be Brownian motion on $\mathfrak{u}(n)$, for its explicit construction in terms of an Euclidean Brownian motion, see \cite[\textsection 3.1]{baudoin2025fullflag}.
Brownian motion on the unitary group $\mathbf{U}(n)$ is the solution $(U(t))_{t\geq 0}$ to the Stratonovich differential equation
\begin{align}\label{eq:Brownian_motion_on_Un}
    dU(t) =U(t)\circ dA(t) ,
\end{align}
where $(A(t))_{t\geq 0}$ is Brownian motion on the Lie algebra $\mathfrak{u}(n)$.
Recall the block notations $U(t)=(U_{ij}(t))_{1\leq i,j\leq k+1}$ and $A(t)=(A_{ij}(t))_{1\leq i,j\leq k+1}$ where $U_{ij},A_{ij}\in\mathbb{C}^{m\times m}$ for $1\leq i,j\leq k+1$.
Note that
\begin{align*}
    (dA_{ij}dA_{jr})_{pq} 
    =\sum_{\ell =1}^{m}(dA_{ij})_{p\ell}(dA_{jr})_{\ell q}
    =-\sum_{\ell =1}^{m}(dA_{ij})_{p\ell}(d\overline{A}_{rj})_{q\ell}
    =-2m \delta_{pq}\delta_{ir}dt
\end{align*}
and therefore
\begin{align*}
   dA_{ij}dA_{jr} = -2m\delta_{ir} I_{m} dt.
\end{align*}

\subsection{Generator of the Brownian motion on the flag manifold}\label{subsec:generator-BM-flag}
In this section we compute the generator of the Brownian motion on the partial flag manifold with blocks of equal size $F_{m,2m,\dots ,km}(\mathbb{C}^n)\cong \mathbf{U}(n)/\mathbf{U}(m)^{k+1}$.
We use the parametrisation explained in \textsection\ref{sec:complex-flag-manifold}.
Let $(U(t))_{t\geq 0}$ be a Brownian motion on $\mathbf{U}(n)$ started from a point $U(0)\in\mathcal{D}_m$.
Since the map $p:\mathcal{D}_m\rightarrow\mathcal{O}_m$ defined in \eqref{eq:Riemannian-submersion-coordinates} is a Riemannian submersion with totally geodesic fibres and since
\begin{align*}
    \mathbb{P}(\exists t\geq 0[U(t)\notin\mathcal{D}_m]) =0,
\end{align*}
we see that the process $(w(t))_{t\geq 0}$, defined by
\begin{align*}
    w_{\ell j}(t):=U_{\ell j}(t)U_{(k+1)j}^{-1}(t)\textrm{ for } 1\leq\ell\leq k\textrm{ and } 1\leq j\leq k+1
\end{align*}
in block form, parametrises a Brownian motion on $F_{m,2m,\dots ,km}(\mathbb{C}^n)$.
We define the $j^{\textrm{th}}$-column of $w(t)$ by
\begin{align*}
    w_j(t):=(w_{1j}(t),\dots ,w_{kj}(t))^T\textrm{ for } 1\leq j\leq k+1 .
\end{align*}
and the components of the process $(J(t))_{t\geq 0}$ by
\begin{align}\label{eq:radial-process-squared}
    J_j(t):=w_j^*(t)w_j(t)\textrm{ for } 1\leq j\leq k+1.
\end{align}

\begin{theorem}\label{thm:Laplace-Beltrami-operator-flags}
    The Laplace-Beltrami operator on $F_{m,2m,\dots ,km}(\mathbb{C}^n)$ is given by
    \begin{align*}
        \Delta_{F_{m,2m,\dots ,km}(\mathbb{C}^n)} 
        =&4\sum_{j=1}^{k+1}\sum_{p,r =1}^{n-m}\sum_{s ,q=1}^m\left( I_{n-m}+w_jw_j^*\right)_{pr}\left( I_{m}+w_j^*w_j\right)_{s q}\frac{\partial^2}{\partial w_{j,pq}\partial\overline{w}_{j,rs }} \\
        &-2\sum_{1\leq j\neq\ell\leq k+1}\sum_{p,r =1}^{n-m}\sum_{s,q =1}^{m}\left( w_{\ell}-w_j\right)_{ps }
        \left( w_j-w_{\ell}\right)_{rq}\frac{\partial^2}{\partial w_{j,pq}\partial w_{\ell ,rs}}\\
        &-2\sum_{1\leq j\neq\ell\leq k+1}\sum_{p,r=1}^{n-m}\sum_{s,q =1}^{m}\left( \overline{w}_{\ell}-\overline{w}_j\right)_{ps}
        \left( \overline{w}_j -\overline{w}_{\ell}\right)_{rq}\frac{\partial^2}{\partial\overline{w}_{j,pq}\partial\overline{w}_{\ell ,rs}} .
    \end{align*}
\end{theorem}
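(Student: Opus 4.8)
The plan is to exploit that $w(t)=p(U(t))$ parametrises a Brownian motion on $F_{m,2m,\dots,km}(\mathbb{C}^n)$ --- which holds because $p:\mathcal{D}_m\to\mathcal{O}_m$ is a Riemannian submersion with totally geodesic (hence minimal) fibres and $U(t)$ a.s.\ never leaves $\mathcal{D}_m$ --- so that its generator is $\tfrac12\Delta_{F_{m,2m,\dots,km}(\mathbb{C}^n)}$; equivalently, the operator in the statement is twice the generator of $w(t)$. Since $w(t)$ a.s.\ stays in $\mathcal{O}_m$, this generator depends only on the restriction of a test function to $\mathcal{O}_m$, hence is a well-defined operator there, and it suffices to compute, by It\^o calculus, the generator of the $\mathbb{C}^{m\times m}$-valued semimartingale $w_{\ell j}(t)=U_{\ell j}(t)U_{(k+1)j}(t)^{-1}$. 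I use the convention $w_{(k+1)r}:=I_m$, so that $U_{\ell r}=w_{\ell r}U_{(k+1)r}$ for all $\ell,r$.

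First I would derive the Stratonovich dynamics of $w_{\ell j}$. Applying the Stratonovich product rule and the identity $\circ\,d(U_{(k+1)j}^{-1})=-U_{(k+1)j}^{-1}(\circ\,dU_{(k+1)j})U_{(k+1)j}^{-1}$ to $w_{\ell j}=U_{\ell j}U_{(k+1)j}^{-1}$, and using $dU=U\circ dA$ blockwise together with $U_{\ell r}=w_{\ell r}U_{(k+1)r}$, the factors $U_{(k+1)j}U_{(k+1)j}^{-1}$ cancel and one obtains
\begin{align*}
    dw_{\ell j}=\sum_{r\neq j}(w_{\ell r}-w_{\ell j})\,U_{(k+1)r}\circ dA_{rj}\,U_{(k+1)j}^{-1}.
\end{align*}
I would then pass to It\^o form, computing the It\^o drift and the brackets $dw_{j,pq}\,d\overline{w}_{j',p'q'}$, $dw_{j,pq}\,dw_{j',p'q'}$ from the covariance rule $dA_{rj}\,dA_{r'j'}=-2m\,\delta_{rj'}\delta_{jr'}I_m\,dt$ and its conjugate. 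The occurrences of $U$ are eliminated using unitarity: from $U^*U=I_n$ one has $U_{(k+1)r}U_{(k+1)r}^*=(I_m+w_r^*w_r)^{-1}$ and $(U_{(k+1)j}^{-1})^*U_{(k+1)j}^{-1}=I_m+w_j^*w_j$, while from $UU^*=I_n$ one has, along the process, $\sum_rw_{\ell r}(I_m+w_r^*w_r)^{-1}w_{\ell'r}^*=\delta_{\ell\ell'}I_m$, $\sum_rw_{\ell r}(I_m+w_r^*w_r)^{-1}=0$ and $\sum_r(I_m+w_r^*w_r)^{-1}=I_m$; these identities telescope $\sum_{r\neq j}(w_{\ell r}-w_{\ell j})(I_m+w_r^*w_r)^{-1}(w_{\ell'r}-w_{\ell'j})^*$ into $\delta_{\ell\ell'}I_m+w_{\ell j}w_{\ell'j}^*$, i.e.\ the $(\ell,\ell')$ block of $I_{n-m}+w_jw_j^*$.

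The outcome one should obtain is: the It\^o drift of $w_{\ell j}$ vanishes; the mixed bracket vanishes for $j\neq j'$ and equals $dw_{j,pq}\,d\overline{w}_{j,p'q'}=2(I_{n-m}+w_jw_j^*)_{pp'}(I_m+w_j^*w_j)_{q'q}\,dt$; and the non-conjugated bracket vanishes for $j=j'$ and equals $dw_{j,pq}\,dw_{j',p'q'}=-2(w_{j'}-w_j)_{pq'}(w_j-w_{j'})_{p'q}\,dt$ for $j\neq j'$. The last two facts are structural: $dA_{rj}\,dA_{r'j'}$ is nonzero only for $r=j'$, $r'=j$, which with $r\neq j$, $r'\neq j'$ forces $j\neq j'$, whereas $dA_{rj}\,d\overline{A}_{r'j'}$ forces $j=j'$; this is exactly what confines the holomorphic--holomorphic and antiholomorphic--antiholomorphic parts of $\Delta$ to off-diagonal column pairs. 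Writing the generator as $\tfrac12\sum a^{IJ}\partial_I\partial_J$ with $a^{IJ}$ these brackets and multiplying by $2$ reproduces the three sums in the statement, including the constant $4$.

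The only real obstacle I anticipate is the bookkeeping in the It\^o computation: keeping track of four index types at once (the two block indices and the two entry indices of each $m\times m$ block), carrying out the Stratonovich--It\^o conversion of the matrix inverse and product correctly, and performing the telescoping that removes every occurrence of $U$ in favour of $w$ --- in particular the cancellation that makes the drift vanish and the sign/combinatorial reorganisation that separates the diagonal from the off-diagonal contributions.
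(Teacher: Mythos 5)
Your plan is correct and essentially the same as the paper's: project a unitary Brownian motion through the affine chart, derive the SDE for $w$, exploit unitarity of $U$ to eliminate the $Z_j$-factors, and read off the Laplace--Beltrami operator from the quadratic covariations via the complex It\^o formula; the paper does this directly in It\^o form, while you pass through Stratonovich first, which is only a cosmetic reorganisation of the same computation. Incidentally, your sign $dw_{j,pq}\,dw_{\ell,rs}=-2(w_\ell-w_j)_{ps}(w_j-w_\ell)_{rq}\,dt$ for $j\neq\ell$ is the one consistent with the theorem statement; the paper's displayed bracket computation reads $+2$ at that step, an apparent dropped minus sign from $(dA_{uj})_{\beta\gamma}(dA_{v\ell})_{\varepsilon\zeta}=-2\,\delta_{u\ell}\delta_{jv}\delta_{\beta\zeta}\delta_{\gamma\varepsilon}\,dt$.
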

\begin{proof}
    Note that the generator of $(w(t))_{t\geq 0}$ is $\frac{1}{2}\Delta_{F_{m,2m,\dots ,km}(\mathbb{C}^n)}$.
    Define
    \begin{align}\label{eq:blocks-BM}
        W_j(t):=(U_{1j}(t),\dots ,U_{kj}(t))^T,\quad Z_j(t):=U_{(k+1)j}(t)
    \end{align}
    and note that $w_j(t)=W_j(t)Z_j^{-1}(t)$.
    Itô's formula gives
    \begin{align*}
        dw_j =d(W_jZ_j^{-1}) =& -W_jZ_j^{-1}dZ_jZ_j^{-1} 
        +W_jZ_j^{-1}dZ_jZ_j^{-1}dZ_jZ_j^{-1}
        +dW_jZ_j^{-1}
        - dW_jZ_j^{-1}dZ_jZ_j^{-1} \\
        =&\begin{pmatrix}
            I_{n-n_j} & -w_j
        \end{pmatrix}\begin{pmatrix}
            dW_j \\ dZ_j
        \end{pmatrix} Z_j^{-1} -\begin{pmatrix}
             I_{n-n_j} & -w_j
        \end{pmatrix}\begin{pmatrix}
            dW_j \\ dZ_j
        \end{pmatrix} \begin{pmatrix}
            0 & Z_j^{-1}
        \end{pmatrix}\begin{pmatrix}
            dW_j \\ dZ_j
        \end{pmatrix} Z_j^{-1} \\
        =&\begin{pmatrix}
            I_{n-n_j} & -w_j
        \end{pmatrix}\sum_{u=1}^{k+1}\begin{pmatrix}
            W_u \\ Z_u
        \end{pmatrix} dA_{uj}Z_j^{-1} +(2n+1)(W_jZ_j^{-1} -w_j) dt\\
        &-\begin{pmatrix}
            I_{n-n_j} & -w_j
        \end{pmatrix}\sum_{u,v =1}^{k+1}\begin{pmatrix}
            W_u \\ Z_u
        \end{pmatrix} dA_{uj} Z_j^{-1}Z_{v} dA_{vj} Z_j^{-1} .
    \end{align*}
    Note that the bounded variation part is zero, since
    \begin{align*}
        \sum_{u,v =1}^{k+1}\left(\begin{pmatrix}
            W_u \\ Z_u
        \end{pmatrix}dA_{uj} Z_j^{-1}Z_{v} dA_{vj}\right)_{pq}
        =&\sum_{\alpha ,\beta ,\gamma ,\delta =1}^m\sum_{u,v =1}^{k+1}\begin{pmatrix}
            W_u \\ Z_u
        \end{pmatrix}_{p\alpha }(dA_{uj})_{\alpha\beta}
        (Z_{j}^{-1})_{\beta \gamma}( Z_{v}
        )_{\gamma\delta } (dA_{v j})_{\delta q} \\
        =&-\sum_{\alpha ,\beta ,\gamma ,\delta =1}^m\sum_{u,v =1}^{k+1}\begin{pmatrix}
            W_u \\ Z_u
        \end{pmatrix}_{p\alpha }(dA_{uj})_{\alpha\beta}( Z_{j}^{-1})_{\beta \gamma}(Z_{v} )_{\gamma \delta } (d\overline{A}_{jv })_{q\delta} \\
        =&-\sum_{\alpha ,\beta ,\gamma ,\delta =1}^m\sum_{u,v =1}^{k+1}\begin{pmatrix}
            W_u \\ Z_u
        \end{pmatrix}_{p\alpha }( Z_{j}^{-1})_{\beta \gamma}( Z_{v}
        )_{\gamma\delta } \delta_{uj}\delta_{jv }\delta_{\alpha q}\delta_{\delta\beta} \\
        =&-\sum_{\beta ,\gamma =1}^m\begin{pmatrix}
            W_j \\ Z_j
        \end{pmatrix}_{pq }(Z_{j}^{-1})_{\beta\gamma}(Z_j )_{\gamma\beta } \\
        =&-\begin{pmatrix}
            W_j \\ Z_j
        \end{pmatrix}_{pq}\mathrm{Tr}\left( I_m\right) 
        =-m\begin{pmatrix}
            W_j \\ Z_j
        \end{pmatrix}_{pq }.
    \end{align*}
    Summarising
    \begin{align}\label{eq:fundamental-eq-w}
        dw_j =\begin{pmatrix}
            I_{n-n_j} & -w_j
        \end{pmatrix}\sum_{u=1; u\neq j}^{k+1}\begin{pmatrix}
            W_u \\ Z_u
        \end{pmatrix} dA_{uj}Z_j^{-1}
        =\sum_{u=1;u\neq j}^{k+1}(w_u -w_j) Z_udA_{uj}Z_j^{-1}.
    \end{align}
    
    Note that this means that $(w(t))_{t\geq 0}$ is a local martingale.
    The quadratic covariances are given by
    \begin{align*}
        (dw_j)_{pq}(d\overline{w}_{\ell})_{rs} 
        =& \sum_{u=1;u\neq j}^{k+1}\sum_{v =1;v\neq\ell}^{k+1}\sum_{\alpha ,\beta ,\gamma ,\delta ,\varepsilon ,\zeta =1}^m(w_u-w_j)_{p\alpha } (Z_u)_{\alpha\beta}d(A_{uj})_{\beta\gamma} (Z_j^{-1})_{\gamma q}\times \\
        &\qquad\qquad\qquad\qquad\qquad\qquad (\overline{w}_v -\overline{w}_{\ell})_{r\delta }(\overline{Z}_{v})_{\delta\varepsilon} d(\overline{A}_{v\ell })_{\varepsilon\zeta} (\overline{Z}_{\ell})^{-1}_{\zeta s} \\
        =&2\delta_{j\ell}\sum_{u=1;u\neq j}^{k+1}\sum_{\alpha ,\beta ,\gamma ,\delta =1}^m(w_u-w_j)_{p\alpha }(Z_u)_{\alpha\beta} (Z_j^{-1})_{\gamma q}
        (\overline{w}_u -\overline{w}_j)_{r\delta }(\overline{Z}_u)_{\delta\beta} (\overline{Z}_j^{-1})_{\gamma s} dt \\
        =&2\delta_{j\ell}\sum_{u=1;u\neq j}^{k+1}((w_u-w_j)Z_uZ_u^*(w_u^*-w_j^*))_{pr}((Z_j^*)^{-1}Z_j^{-1})_{sq}\\
        =&2\delta_{j\ell }\sum_{u=1}^{k+1}\left(\begin{pmatrix} I_{n-m} & -w_j\end{pmatrix}\begin{pmatrix}
            W_uW_u^* & W_uZ_u^* \\ Z_uW_u^* & Z_u Z_u^*
        \end{pmatrix}\begin{pmatrix} I_{n-m} \\ -w_j^*\end{pmatrix}\right)_{pr} (Z_j Z_j^*)^{-1}_{s q} dt \\
        =&2\delta_{j\ell }\left(\begin{pmatrix} I_{n-m} & -w_j\end{pmatrix}UU^*\begin{pmatrix} I_{n-m} \\ -w_j^*\end{pmatrix}\right)_{pr} (Z_j Z_j^*)^{-1}_{s q} dt \\
        =&2\delta_{j\ell }\left(\begin{pmatrix} I_{n-m} & -w_j\end{pmatrix}\begin{pmatrix} I_{n-m} \\ -w_j^*\end{pmatrix}\right)_{pr} (Z_j Z_j^*)^{-1}_{s q} dt \\
        =&2\delta_{j\ell }\left( I_{n-m} +w_jw_j^*\right)_{pr} (I_{m} +w_j^*w_j)_{s q} dt ,
    \end{align*}
    where we used the orthogonality of $(U(t))_{t\geq 0}$.
    
    It turns out that the Brownian motion on a partial flag manifold with blocks of equal size is not always conformal.
    Therefore we are also interested in
    \begin{align*}
        (dw_j)_{pq}(dw_{\ell})_{rs} 
        =&\sum_{u=1;u\neq j}^{k+1}\sum_{v=1;v\neq\ell}^{k+1}\sum_{\alpha ,\beta ,\gamma ,\delta ,\varepsilon ,\zeta =1}^m(w_u -w_j)_{p\alpha }(Z_u)_{\alpha\beta} d(A_{uj})_{\beta\gamma} (Z_j^{-1})_{\gamma q}\times\\
        &\qquad\qquad\qquad\qquad\qquad\qquad (w_v-w_{\ell})_{r\delta }(Z_{v})_{\delta\varepsilon} d(A_{v \ell})_{\varepsilon\zeta} (Z_{\ell}^{-1})_{\zeta s} \\
        =&2\sum_{\alpha ,\beta ,\gamma ,\delta =1}^{m}(w_{\ell}-w_j)_{p\alpha}(Z_{\ell})_{\alpha\beta}(Z_j^{-1})_{\gamma q}(w_j-w_{\ell})_{r\delta}(Z_j)_{\delta\gamma}(Z_{\ell}^{-1})_{\beta s}\\
        =&2(w_{\ell}-w_j)_{ps}(w_j-w_{\ell})_{rq}\\
    \end{align*}
    and similarly
    \begin{align*}
        (d\overline{w}_j)_{pq}(d\overline{w}_{\ell})_{rs} =&2(\overline{w}_{\ell}-\overline{w}_j)_{ps}(\overline{w}_j-\overline{w}_{\ell})_{rq}.
    \end{align*}
    We conclude by applying the complex form of Itô's formula.
\end{proof}

\subsection{Radial motions}\label{subsec:radial-motions}
In this section we will determine the generator of a process related to the process $(J(t))_{t\geq 0}$ of Brownian motion $(w(t))_{t\geq 0}$ on $F_{m,2m,\dots ,km}(\mathbb{C}^n)$ defined in \eqref{eq:radial-process-squared}.

Consider the \emph{squared radial processes}
\begin{align}\label{eq:the-process-of-interest}
    \Lambda (t):=\left( (I_m+J_1(t))^{-1},\dots ,(I_m +J_{k+1}(t))^{-1}\right)\textrm{ for } t\geq 0.
\end{align}
Note that $U^*(t)U(t)=I_n$ implies $W_j^*(t)W_j(t)+Z_j^*(t)Z_j(t)=I_m$ and thus $\Lambda_j(t)=Z_j(t)Z_j^*(t)$, with $(W_j(t))_{t\geq 0}$ and $(Z_j(t))_{t\geq 0}$ defined as in \eqref{eq:blocks-BM}.
From this we can easily see that $(\Lambda (t))_{t\geq 0}$ is stochastic process in the \emph{simplex of Hermitian matrices}
\begin{align*}
    \mathcal{T}^m_n :=\left\{ (\Lambda_1,\dots ,\Lambda_{k+1})\in M_{m\times m}(\mathbb{C})^{k+1}\Bigm| \sum_{j=1}^{k+1}\Lambda_j =I_m, \Lambda_j\geq 0 ,\Lambda_j^*=\Lambda_j\textrm{ for } 1\leq j\leq k+1\right\} .
\end{align*}
Furthermore, the process \eqref{eq:the-process-of-interest} turns out to be a diffusion as shown in theorem \ref{thm:generator-process-of-interest} below.

\begin{theorem}\label{thm:generator-process-of-interest}
    Let $(w_1 (t),\dots ,w_{k}(t))_{t\geq 0}$ be a Brownian motion on the partial flag manifold $F_{m,2m,\dots ,km}(\mathbb{C}^n)$.
    The process $(\Lambda (t))_{t\geq 0}$ given by \eqref{eq:the-process-of-interest} satisfies the stochastic differential equation
    \begin{align*}
        d\Lambda_j (t) = \Lambda_j^{\frac{1}{2}}(t)\sum_{\ell =1;\ell\neq j}^{k+1}d\gamma^*_{\ell j}(t) \Lambda_{\ell}^{\frac{1}{2}}(t) 
        +\sum_{\ell =1;\ell\neq j}^{k+1}\Lambda_{\ell}^{\frac{1}{2}}(t) d\gamma_{\ell j}(t)\Lambda_j^{\frac{1}{2}}(t) +2(mI_m-n\Lambda_j(t) )dt,
    \end{align*}
    where $(\gamma(t))_{t\geq 0}$ is a Brownian motion on $\mathfrak{u}(n)$.
    Consequently, $(\Lambda (t))_{t\geq 0}$ is a multi-variate matrix-valued diffusion on the simplex $\mathcal{T}_n^m$ with generator $2\hat{\mathcal{G}}_{m/2,\dots ,m/2}^m$, where
    \begin{align*}
        \hat{\mathcal{G}}_{m/2,\dots ,m/2}^m=&\sum_{j=1}^{k+1}\sum_{\alpha ,\beta ,\gamma ,\delta =1}^m (I_m-\Lambda_j)_{\alpha\delta}\Lambda_{j,\gamma\beta}\frac{\partial^2}{\partial\Lambda_{j,\alpha\beta}\partial\Lambda_{j,\gamma\delta}}
        +\sum_{j=1}^{k+1}\sum_{\alpha ,\beta =1}^{m} (mI_m-n\Lambda_{j})_{\alpha\beta}\frac{\partial}{\partial\Lambda_{j,\alpha\beta}}\\
        &-\frac{1}{2}\sum_{1\leq j\neq \ell\leq k+1}\sum_{\alpha ,\beta ,\gamma ,\delta =1}^m(\Lambda_{j,\gamma\beta}\Lambda_{\ell ,\alpha\delta} +\Lambda_{j,\alpha\delta}\Lambda_{\ell ,\gamma\beta})\frac{\partial^2}{\partial\Lambda_{j,\alpha\beta}\partial\Lambda_{\ell ,\gamma\delta}} .
    \end{align*}
\end{theorem}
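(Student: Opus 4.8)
The plan is to derive the stochastic differential equation for $\Lambda_j(t) = Z_j(t) Z_j^*(t)$ by applying Itô's formula, using the representation of the unitary Brownian motion in block form and the identity $W_j^* W_j + Z_j^* Z_j = I_m$ established just before the statement. First I would compute $dZ_j$ from the defining equation $dU = U \circ dA$; passing to Itô form, $dZ_j = \sum_{u=1}^{k+1} Z_u \, dA_{uj} + (\text{drift})$, where the drift comes from the Stratonovich correction and equals a multiple of $Z_j \, dt$ by the contraction rule $dA_{ij} dA_{jr} = -2m\delta_{ir} I_m \, dt$ recorded in the excerpt. Then $d\Lambda_j = dZ_j \, Z_j^* + Z_j \, dZ_j^* + dZ_j \, dZ_j^*$, and I would expand each term, again using the contraction rule to evaluate the quadratic covariation and the drift. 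The off-diagonal sums $\sum_{\ell \neq j}$ will appear naturally once one uses $\sum_u Z_u Z_u^* = I_m - W_j(\dots)$ type identities together with orthogonality $U^* U = I_n$; the terms with $u = j$ should recombine with the Stratonovich drift to produce the stated drift $2(m I_m - n\Lambda_j)\,dt$. To get the precise martingale form with $\Lambda_j^{1/2}$ and $\Lambda_\ell^{1/2}$, I would observe that the martingale part of $d\Lambda_j$ has a quadratic variation structure matching that of $\Lambda_j^{1/2} \sum_{\ell \neq j} d\gamma_{\ell j}^* \Lambda_\ell^{1/2} + \sum_{\ell \neq j} \Lambda_\ell^{1/2} d\gamma_{\ell j} \Lambda_j^{1/2}$ for an auxiliary $\mathfrak{u}(n)$-Brownian motion $\gamma$, and invoke a martingale representation / matching-of-brackets argument (as is standard for matrix Jacobi-type processes) to realise it in exactly that form. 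The key computation here is that $Z_j Z_j^* = \Lambda_j$ while $\sum_{u \neq j} Z_u Z_u^* = \Lambda_j^{-1} - \Lambda_j$... — more precisely one tracks that the bracket of the martingale part of $\Lambda_j$ involves $\Lambda_j$ paired with $(I_m - \Lambda_j)$, which is what makes the half-power ansatz work.

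Once the SDE is in hand, reading off the generator is routine: for a test function $f(\Lambda)$, Itô's formula gives the second-order part from the quadratic covariations $d\Lambda_{j,\alpha\beta} d\Lambda_{j,\gamma\delta}$ and $d\Lambda_{j,\alpha\beta} d\Lambda_{\ell,\gamma\delta}$, and the first-order part from the drift $2(m I_m - n\Lambda_j)$. The diagonal-in-$j$ covariation contributes the term $\sum_j \sum_{\alpha\beta\gamma\delta} (I_m - \Lambda_j)_{\alpha\delta} \Lambda_{j,\gamma\beta} \partial^2_{\Lambda_{j,\alpha\beta} \Lambda_{j,\gamma\delta}}$, and the cross term in $j \neq \ell$ contributes $-\tfrac12 \sum_{j\neq\ell}(\Lambda_{j,\gamma\beta}\Lambda_{\ell,\alpha\delta} + \Lambda_{j,\alpha\delta}\Lambda_{\ell,\gamma\beta}) \partial^2_{\Lambda_{j,\alpha\beta}\Lambda_{\ell,\gamma\delta}}$. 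Collecting these and dividing out the factor $2$ (since the generator of $(w(t))_{t\geq 0}$ is $\tfrac12 \Delta$ and $\Lambda$ is a functional of $w$, the SDE above already has the factor $2$ baked into the drift and bracket, so the generator is $2\hat{\mathcal{G}}$ with $\hat{\mathcal{G}}$ as displayed) yields the claimed operator $2\hat{\mathcal{G}}^m_{m/2,\dots,m/2}$. One should double-check the symmetrisation conventions in the partial derivatives $\partial/\partial\Lambda_{j,\alpha\beta}$, treating the entries of the Hermitian matrix $\Lambda_j$ as formally independent coordinates in the chart, so that the coefficients match after symmetrising.

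The main obstacle I expect is the bookkeeping needed to pin down the exact form of the martingale part, specifically verifying that the brackets produced by the block computation agree, index by index, with those of $\Lambda_j^{1/2} \sum_\ell d\gamma_{\ell j}^* \Lambda_\ell^{1/2} + \sum_\ell \Lambda_\ell^{1/2} d\gamma_{\ell j} \Lambda_j^{1/2}$: this requires care because the "natural" martingale coming from $dZ_j = \sum_u Z_u \, dA_{uj}$ is expressed in terms of $Z_u$, not $\Lambda_u^{1/2}$, and one needs the polar-decomposition-type identity relating $Z_u Z_u^*$ to $\Lambda_u$ together with the fact that only the symmetrised combination enters $d\Lambda_j$. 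A secondary subtlety is confirming that $(\Lambda(t))_{t\geq 0}$ never leaves the simplex $\mathcal{T}_n^m$ — but this is immediate from $\Lambda_j = Z_j Z_j^*$ and $\sum_j Z_j^* Z_j = \sum_j(I_m - W_j^* W_j) \cdot$... ; more directly $\sum_j \Lambda_j = \sum_j Z_j Z_j^*$ equals the bottom-right $m\times m$ block of $U U^* = I_n$, hence $I_m$, and each $\Lambda_j = Z_j Z_j^* \geq 0$ is Hermitian, so no boundary argument is needed beyond this observation. The rest is a (lengthy but mechanical) application of the complex Itô formula and the contraction rule $dA_{ij} dA_{jr} = -2m\delta_{ir} I_m \, dt$.
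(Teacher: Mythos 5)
Your approach is correct in its main lines and in fact takes a genuinely different --- and arguably shorter --- route than the paper. The paper's proof goes through $dw_j$: it starts from $dw_j = \sum_{u\neq j}(w_u - w_j)Z_u\,dA_{uj}Z_j^{-1}$, uses the polar decomposition $Z_\ell = \Lambda_\ell^{1/2}Q_\ell$ to rewrite the driving noise as a $\mathfrak{u}(n)$-Brownian motion $\gamma$, and then applies Itô's formula to $\Lambda_j = (I_m + w_j^*w_j)^{-1}$, heavily invoking the algebraic relations $w_j^*w_\ell = -I_m$ for $j\neq\ell$. You instead work directly with $\Lambda_j = Z_jZ_j^*$ and compute $d(Z_jZ_j^*)$ from $dZ_j = \sum_u Z_u\,dA_{uj} - nZ_j\,dt$. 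This avoids the entire detour through $dw_j$ and the $w_j^*w_\ell = -I_m$ algebra, and needs only $\sum_u Z_uZ_u^* = I_m$; the drift $2(mI_m - n\Lambda_j)\,dt$ then falls out from the two Stratonovich correction terms $-n\Lambda_j\,dt$ and the bracket $dZ_j\,dZ_j^* = 2mI_m\,dt$. Where your route is sound, it buys you a cleaner calculation.

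A few points should be tightened. First, your intermediate identity ``$\sum_{u\neq j} Z_uZ_u^* = \Lambda_j^{-1} - \Lambda_j$'' is wrong: since $\sum_u Z_uZ_u^* = I_m$ one has $\sum_{u\neq j} Z_uZ_u^* = I_m - \Lambda_j$ (you partly self-correct a line later, but the displayed identity is an error). Second, the $u=j$ martingale term does not ``recombine with the Stratonovich drift''; it simply vanishes on the nose because $A_{jj}$ is skew-Hermitian, so $Z_j(dA_{jj} + dA_{jj}^*)Z_j^* = 0$, while the drift is produced entirely by the Stratonovich correction plus the quadratic bracket. Third, the ``matching-of-brackets / martingale representation'' argument is heavier machinery than is needed and slightly misdirects the reader: the paper constructs the $\mathfrak{u}(n)$-Brownian motion $\gamma$ explicitly by writing $Z_\ell = \Lambda_\ell^{1/2}Q_\ell$ with $Q_\ell \in \mathbf{U}(m)$ and setting $d\gamma_{\ell j} := Q_\ell\,dA_{\ell j}\,Q_j^*$, then verifies via Lévy's characterisation that this is a Brownian motion on $\mathfrak{u}(n)$. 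Once you plug $Z_u = \Lambda_u^{1/2}Q_u$ into $\sum_{u\neq j} Z_u\,dA_{uj}\,Z_j^* + \text{h.c.}$, the stated form $\Lambda_j^{1/2}\sum_{\ell\neq j} d\gamma_{\ell j}^*\Lambda_\ell^{1/2} + \sum_{\ell\neq j}\Lambda_\ell^{1/2} d\gamma_{\ell j}\Lambda_j^{1/2}$ drops out identically, with no representation theorem required. With those repairs, your derivation of the SDE --- and the subsequent routine read-off of the generator --- is correct.
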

\begin{proof}
    Since $Z_{j}(t)Z_{j}^*(t)=\Lambda_j(t)$, there exists a $Q_j\in U(m)$ such that $Z_j(t)= \Lambda^{\frac{1}{2}}_j(t)Q_j(t)$.
    Equation \eqref{eq:fundamental-eq-w} gives
    \begin{align*}
        dw_j =\sum_{\ell =1;\ell\neq j}^{k+1}(w_i-w_j)\Lambda^{\frac{1}{2}}_{\ell}Q_{\ell}dA_{\ell j}Q^*_j\Lambda^{-\frac{1}{2}}_j.
    \end{align*}
    Define
    \begin{align*}
        d\gamma_{\ell j} := Q_{\ell}dA_{\ell j}Q_j^*\textrm{ for }q\leq\ell\neq j\leq k+1 .
    \end{align*}
    Note that $\gamma_{\ell j}^* =-\gamma_{j\ell}$ and that for $q\leq j\neq\ell\leq k+1$ and $1\leq r\neq s\leq k+1$ we have
    \begin{align*}
        (d\gamma_{\ell j})_{pq}(d\overline{\gamma}_{rs})_{p'q'} =&\sum_{\alpha ,\beta ,\alpha' ,\beta' =1}^m Q_{\ell, p\alpha}(dA_{\ell j})_{\alpha\beta}Q_{j,\beta q}^*\overline{Q}_{s,p'\alpha'}(d\overline{A}_{r s})_{\alpha'\beta'}\overline{Q}_{r,\beta' q'}^*\\
        =&2\delta_{r\ell}\delta_{js}\sum_{\alpha ,\beta ,\gamma =1}^m Q_{\ell, p\alpha}Q_{j,\beta q}^*\overline{Q}_{j,p'\alpha}\overline{Q}_{\ell,\beta q'}^* dt
        =2\delta_{r\ell}\delta_{js}\delta_{pp'}\delta_{q'q} dt,
    \end{align*}
    which shows that $(\gamma (t))_{t\geq 0}$ is a Brownian motion on $\mathfrak{u}(n)$ by the Lévy characterisation theorem, if we define $\gamma_{jj}$ correctly for $1\leq j\leq k+1$.
    Note that these will not appear in the equation below.
    In particular,
    \begin{align}\label{eq:derivative-Brownian-motion}
        dw_j =\sum_{\ell =1;\ell\neq j}^{k+1}(w_{\ell}-w_j)\Lambda^{\frac{1}{2}}_{\ell}d\gamma_{\ell j}\Lambda^{-\frac{1}{2}}_j.
    \end{align}

    Furthermore for matrices $M\in\mathbb{C}^{m\times m}$ we have
    \begin{align*}
        (dw_j^*Mdw_{\ell})_{pq}=&\sum_{\alpha ,\beta =1}^m(dw_j^*)_{p\alpha} M_{\alpha\beta}(dw_j)_{\beta q}\\
        =&\sum_{\substack{u,v =1\\ u\neq j,v\neq\ell}}^{k+1}\sum_{\alpha ,\beta ,\gamma ,\delta =1}^m\Lambda^{-\frac{1}{2}}_{j,p\alpha}(d\overline{\gamma}_{uj })_{\beta\alpha}(\Lambda^{\frac{1}{2}}_{u}(w_u-w_j)^*M(w_v-w_{\ell})\Lambda^{\frac{1}{2}}_{v})_{\beta\gamma}(d\gamma_{v\ell })_{\gamma\delta}\Lambda^{-\frac{1}{2}}_{\ell ,\delta q} \\
        =&2\delta_{j\ell}\sum_{u =1;u\neq j}^{k+1}\sum_{\alpha ,\beta =1}^m\Lambda^{-\frac{1}{2}}_{j,p\alpha}(\Lambda^{ \frac{1}{2}}_{u}(w_u-w_j)^*M(w_u-w_{j})\Lambda^{\frac{1}{2}}_u)_{\beta\beta}\Lambda^{-\frac{1}{2}}_{j,\alpha q} dt\\
        =&2\delta_{j\ell}\sum_{u =1;u\neq j}^{k+1} \mathrm{Tr}((w_u-w_j)^*M(w_u-w_{j})\Lambda_u)\Lambda^{-1}_{j,pq} dt
    \end{align*}
    and similarly
    \begin{align*}
        dw_jMdw_{\ell}^* =2\delta_{j\ell}\sum_{u=1;u\neq j}^{k+1}(w_u-w_j)\Lambda_u^{\frac{1}{2}}(w_u^*-w_j^*)\mathrm{Tr}(M\Lambda_j^{-1}) dt.
    \end{align*}
    The other covariances are zero.

    Itô's formula gives
    \begin{align*}
        d\Lambda_j =&d(I_m+w^*_jw_j)^{-1}
        =-(I_m+w_j^*w_j)^{-1}(dw^*_jw_j +w_j^*dw_j+dw_j^*dw_j)(I_m+w_j^*w_j)^{-1} \\
        &+(I_m+w_j^*w_j)^{-1}(dw^*_jw_j +w_j^*dw_j)(I_m+w_j^*w_j)^{-1}(dw^*_jw_j +w_j^*dw_j)(I_m+w_j^*w_j)^{-1} \\
        =&-\Lambda_j(dw^*_jw_j +w_j^*dw_j+dw_j^*dw_j)\Lambda_j +\Lambda_j(dw^*_jw_j +w_j^*dw_j)\Lambda_j(dw^*_jw_j +w_j^*dw_j)\Lambda_j .
    \end{align*}
    Together with the relation $w_j^*w_{\ell}=-I_m$ for $1\leq j\neq\ell\leq k+1$ this gives
    \begin{align*}
        d\Lambda_j=&-\sum_{\ell =1;\ell\neq j}^{k+1}\left(\Lambda_{j}^{\frac{1}{2}}d\gamma_{\ell j}^*\Lambda_{\ell}^{\frac{1}{2}} +\Lambda_{\ell}^{\frac{1}{2}}d\gamma_{\ell j}\Lambda_{j}^{\frac{1}{2}}\right) 
        -2\sum_{u=1;u\neq j}^{k+1}\Lambda_j\mathrm{Tr}((w_u^* -w_j^*)(w_u -w_j)\Lambda_u) dt\\
        &+2\Lambda_j\sum_{u=1;u\neq j}^{k+1}\mathrm{Tr}((w_u^*-w_j^*)w_j\Lambda_jw_j^*(w_u-w_j)\Lambda_u ) dt\\
        &+2\sum_{u=1;u\neq j}^{k+1}\Lambda_jw_j^*(w_u-w_j)\Lambda_u(w_u^* -w_j^*)w_j\Lambda_j\mathrm{Tr}(\Lambda_j^{-1}\Lambda_j) dt\\
        =&-\sum_{\ell =1;\ell\neq j}^{k+1}\left(\Lambda_{j}^{\frac{1}{2}}d\gamma_{\ell j}^*\Lambda_{\ell}^{\frac{1}{2}} +\Lambda_{\ell}^{\frac{1}{2}}d\gamma_{\ell j}\Lambda_{j}^{\frac{1}{2}}\right) 
        -2\mathrm{Tr}(kI_m+\Lambda_j^{-1}(I_m-\Lambda_j) )\Lambda_jdt \\
        &+2\mathrm{Tr}(\Lambda^{-1}_j(I_m-\Lambda_j))\Lambda_j dt
        + 2m(I_m-\Lambda_j)dt\\
        =&\Lambda_j^{\frac{1}{2}}\sum_{\ell =1;\ell\neq j}^{k+1}d\gamma^*_{\ell j} \Lambda_{\ell}^{\frac{1}{2}} +\sum_{\ell =1;\ell\neq j}^{k+1}\Lambda_{\ell}^{\frac{1}{2}} d\gamma_{\ell j}\Lambda_j^{\frac{1}{2}} +2(mI_m-n\Lambda_j )dt .
    \end{align*}

    The expression for twice the generator $2\hat{\mathcal{G}}_{m/2,\dots ,m/2}^{m}$ follows by a simple calculation.
    More precisely $(d\Lambda_j)_{\alpha\beta}(d\Lambda_{\ell})_{\gamma\delta}$ gives the coefficient of the $\frac{\partial^2}{\partial\Lambda_{j,\alpha\beta}\partial\Lambda_{\ell ,\gamma\delta}}$ term times $dt$ and the $(\alpha ,\beta)$-component of the bounded variation part gives the $\frac{\partial}{\partial\Lambda_{j,\alpha\beta}}$ term. 
    Alternatively, one can determine how the Laplace-Beltrami operator $\Delta_{F_{m,2m,\dots ,km}(\mathbb{C})}$ acts on functions depending only on
    \begin{align}\label{eq:coordinates-of-interest}
        \Lambda :=\begin{pmatrix} (I_m+w^*_1w_1)^{-1} &\dots & (I_m+w_{k+1}^*w_{k+1})^{-1}\end{pmatrix}
    \end{align}
    using lemma \ref{lemma:derivatives-matrices}.
\end{proof}

In analogy with \cite[Theorem 3.5.]{baudoin2025fullflag} and \cite[Proposition 8.1.1.]{Baudoin2024-is} we will introduce the following family of diffusions.
For other parameters these will turn out to be relevant in theorem \ref{thm:Laplace-transform-area-functional-equal-size}.

\begin{definition}\label{def:matrix-Jacobi-process-simplex}
    Let $\kappa =(\kappa_1,\dots ,\kappa_{k+1})$ be a multi-index such that $\kappa_j >m/2 -1$ for $1\leq j\leq k+1$.
    The \emph{Jacobi operator} on $\mathcal{T}^m_n$ of index $\kappa$ is defined by
    \begin{align*}
            \mathcal{G}_{\kappa}^m :=&\frac{1}{2}\sum_{j=1}^{k+1}\sum_{\alpha ,\beta ,\gamma ,\delta =1}^m ((I_m-\Lambda_j)_{\alpha\delta}\Lambda_{j,\gamma\beta} +(I_m-\Lambda_j)_{\gamma\beta}\Lambda_{j,\alpha\delta})\frac{\partial^2}{\partial\Lambda_{j,\alpha\beta}\partial\Lambda_{j,\gamma\delta}} \\ 
            &-\frac{1}{2}\sum_{1\leq j\neq\ell\leq k+1}\sum_{\alpha ,\beta ,\gamma ,\delta =1}^m (\Lambda_{j,\alpha\delta}\Lambda_{\ell ,\gamma\beta} +\Lambda_{j,\gamma\beta}\Lambda_{\ell ,\alpha\delta})\frac{\partial^2}{\partial\Lambda_{j,\alpha\beta}\partial\Lambda_{\ell ,\gamma\delta}}\\
            & +\sum_{j=1}^{k+1}\sum_{\alpha ,\beta =1}^m\left(\left(\kappa_j +\frac{m}{2}\right) \delta_{\alpha\beta}-\left(|\kappa |+\frac{n}{2}\right)\Lambda_{j,\alpha\beta}\right)\frac{\partial}{\partial\Lambda_{j,\alpha\beta}} ,
    \end{align*}
    where we used the notation $|\kappa |:=\kappa_1 +\dots +\kappa_{k+1}$ and $n=(k+1)m$.
    A matrix diffusion $\Lambda =(\Lambda_1 ,\dots ,\Lambda_{k})$ with generator $\mathcal{G}_{\kappa}^m$ is called a \emph{Jacobi process on the simplex of Hermitian matrices} of index $\kappa$ or more simply a \emph{Hermitian Jacobi process on the simplex}.
\end{definition}

\begin{remark}\label{rmk:marginal-ditribution}
    Let $(\Lambda (t))_{t\geq 0}=(\Lambda_1 (t),\dots ,\Lambda_{k+1}(t))_{t\geq 0}$ be a Jacobi process on the simplex of Hermitian matrices.
    It is straightforward to show that, for $1\leq j\leq k+1$, $\mathcal{G}^m_{\kappa}$ acts on functions depending only on $\Lambda_j$ as
    \begin{align*}
        \sum_{\alpha ,\beta ,\gamma ,\delta =1}^m (I_m-\Lambda_j)_{\alpha\delta}\Lambda_{j,\gamma\beta} \frac{\partial^2}{\partial\Lambda_{j,\alpha\beta}\partial\Lambda_{j,\gamma\delta}} 
        +\sum_{\alpha ,\beta =1}^m\left(\left(\kappa_j +\frac{m}{2}\right) \delta_{\alpha\beta}-\left(|\kappa |+\frac{n}{2}\right)\Lambda_{j,\alpha\beta}\right)\frac{\partial}{\partial\Lambda_{j,\alpha\beta}} .
    \end{align*}
    This is precisely the generator of a Hermitian Jacobi process of index $(\kappa_j -m/2 ,|\kappa|-\kappa_j-m/2)$.
    The marginal processes $(\Lambda_j(t))_{t\geq 0}$ are therefore Hermitian Jacobi processes.
\end{remark}

\begin{remark}
    The calculation in theorem \ref{thm:Laplace-Beltrami-operator-flags} can also be carried out for general partial flag manifolds without problem.
    However, when calculating how the corresponding Laplace-Beltrami operator acts on functions depending only on $\Lambda =(\Lambda_1,\dots ,\Lambda_{k+1})$, defined analogues to \eqref{eq:coordinates-of-interest}, one sees that the corresponding expression reduces to an expression in terms of $\Lambda$ itself precisely when it has blocks of equal size.
\end{remark}

\section{Stochastic area functionals and horizontal Brownian motions}\label{chap:stochastic-area}
Recall the squared radial process $(\Lambda (t))_{t\geq 0}$ defined in \eqref{eq:the-process-of-interest}.
Because of the embedding
\begin{align*}
    F_{m,2m,\dots ,km}(\mathbb{C}^n)\hookrightarrow (Gr_{n,m}(\mathbb{C}))^{k+1}
\end{align*}
it makes sense to define the \emph{stochastic area processes} by $\mathfrak{a}(t) =(\mathfrak{a}_1(t),\dots ,\mathfrak{a}_{k+1}(t))$ with
\begin{align}\label{eq:stochastic-area-process}
    d\mathfrak{a}_j(t) =\frac{i}{2}\mathrm{Tr}\left(\Lambda_j (t)(dw_j^*(t) w_j(t) -w_j^*(t)dw_j(t))\right)
\end{align}
the area process on the Grassmannian $Gr_{n,m}(\mathbb{C})$, where this stochastic differential equation is understood in the Stratonovich sense, or equivalently in the Itô sense, see \cite[\textsection 8.3.2]{Baudoin2024-is}.

\begin{remark}
    The space $F_{m,2m,\dots ,km}(\mathbb{C}^n)$ is a Kähler manifold, moreover it is even a projective variety, since it is embedded in $Gr_{n,m}(\mathbb{C})\times\dots\times Gr_{n,m}(\mathbb{C})$. Its Kähler form is given on $\mathcal{O}_m$ by
    \begin{align*}
        \omega =\sum_{j=1}^{k+1}d\mathfrak{a}_j
    \end{align*}
    with $d\mathfrak{a}_j :=\frac{i}{2}\mathrm{Tr}((1+w_j^*w_j)^{-1}(dw_j^*w_j-w_j^*dw_j))$ the area form on $Gr_{n,m}(\mathbb{C})$.
    The area forms corresponding to \eqref{eq:stochastic-area-process} can thus be seen as a canonical decomposition of the Kähler form.
\end{remark}

From theorem \ref{thm:generator-process-of-interest} we can show that the stochastic area process $(\mathfrak{a}(t))_{t\geq 0}$ is part of a diffusion.

\begin{corollary}\label{cor:main-diffusion-m}
    The stochastic process $(\Lambda (t),\mathfrak{a}(t))_{t\geq 0}$ is a diffusion with generator
    \begin{align*}
        L:=&2\sum_{j=1}^{k+1}\sum_{\alpha ,\beta ,\gamma ,\delta =1}^m (I_m-\Lambda_j)_{\alpha\delta}\Lambda_{j,\gamma\beta}\frac{\partial^2}{\partial\Lambda_{j,\gamma\delta}\partial\Lambda_{j,\alpha\beta}} 
        +2\sum_{j=1}^{k+1}\sum_{\alpha ,\beta =1}^m (mI_m-n\Lambda_{j})_{\alpha\beta}\frac{\partial}{\partial\Lambda_{j,\alpha\beta}}\\
        &-\sum_{1\leq j\neq \ell\leq k+1}^{k+1}\sum_{\alpha ,\beta ,\gamma ,\delta =1}^m(\Lambda_{j,\gamma\beta}\Lambda_{\ell ,\alpha\delta} +\Lambda_{j,\alpha\delta}\Lambda_{\ell ,\gamma\beta})\frac{\partial^2}{\partial\Lambda_{j,\alpha\beta}\partial\Lambda_{\ell ,\gamma\delta}}\\
        &+\sum_{j=1}^{k+1}(\mathrm{Tr}(\Lambda_j^{-1}) -m)\frac{\partial^2}{\partial\mathfrak{a}_j^2}
        +m\sum_{1\leq j\neq\ell\leq k+1}\frac{\partial^2}{\partial\mathfrak{a}_j\partial\mathfrak{a}_{\ell}}.
    \end{align*}
\end{corollary}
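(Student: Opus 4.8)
The plan is to compute the generator of the joint process $(\Lambda(t),\mathfrak{a}(t))_{t\geq 0}$ directly from the Itô dynamics already established. By Theorem \ref{thm:generator-process-of-interest} the $\Lambda$-part of the generator is $2\hat{\mathcal{G}}^m_{m/2,\dots,m/2}$, which accounts for the first three lines of $L$ verbatim; so the work is entirely in the $\mathfrak{a}$-direction: I need the quadratic covariations $d\mathfrak{a}_j\,d\mathfrak{a}_\ell$ and the cross-covariations $d\mathfrak{a}_j\,d\Lambda_{\ell,\gamma\delta}$, and I need to check the bounded-variation (drift) part of $d\mathfrak{a}_j$ vanishes, so that no first-order $\partial/\partial\mathfrak{a}_j$ term appears.

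First I would substitute the expression \eqref{eq:derivative-Brownian-motion} for $dw_j$ into the defining equation \eqref{eq:stochastic-area-process}, writing $w_j = \Lambda_j^{-1/2}$-type factors aside and using $Z_j = \Lambda_j^{1/2}Q_j$ together with the noise $\gamma_{\ell j}=Q_\ell\,A_{\ell j}\,Q_j^*$ as in the proof of Theorem \ref{thm:generator-process-of-interest}. The key inputs are the covariation formulas already derived there, namely $dw_j^* M\, dw_\ell = 2\delta_{j\ell}\sum_{u\neq j}\mathrm{Tr}((w_u-w_j)^*M(w_u-w_j)\Lambda_u)\,\Lambda_j^{-1}\,dt$ and its conjugate $dw_j M\, dw_\ell^* = 2\delta_{j\ell}\sum_{u\neq j}(w_u-w_j)\Lambda_u^{1/2}(w_u^*-w_j^*)\,\mathrm{Tr}(M\Lambda_j^{-1})\,dt$, with all other covariations zero. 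Plugging these into $d\mathfrak{a}_j = \tfrac{i}{2}\mathrm{Tr}(\Lambda_j(dw_j^* w_j - w_j^* dw_j))$ I would: (i) expand the Stratonovich-to-Itô correction and the Itô term; (ii) use repeatedly the relations $w_j^* w_\ell = -I_m$ for $j\neq\ell$ and $I_m + w_j^* w_j = \Lambda_j^{-1}$; and (iii) collect. I expect the drift of $\mathfrak{a}_j$ to cancel (as it does on the Grassmannian, cf. \cite[\textsection 8.3.2]{Baudoin2024-is}), and $(d\mathfrak{a}_j)^2$ to reduce to $(\mathrm{Tr}(\Lambda_j^{-1})-m)\,dt$ after simplifying $\sum_{u\neq j}\mathrm{Tr}((w_u-w_j)^*(w_u-w_j)\Lambda_u)$ via $w_u^* w_j = -I_m$, $w_j^* w_j = \Lambda_j^{-1}-I_m$, giving $\sum_{u\neq j}\mathrm{Tr}(\Lambda_j^{-1}\Lambda_u) = \mathrm{Tr}(\Lambda_j^{-1}(I_m-\Lambda_j)) = \mathrm{Tr}(\Lambda_j^{-1})-m$; and similarly $d\mathfrak{a}_j\,d\mathfrak{a}_\ell$ for $j\neq\ell$ to reduce to $m\,dt$, using that only the $u=\ell$ term in one sum pairs with the $u=j$ term in the other through the single shared noise $\gamma_{\ell j}$ (equivalently $\gamma_{j\ell}$).

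Finally I would check that the cross-covariations $d\mathfrak{a}_j\,d\Lambda_{\ell,\gamma\delta}$ all vanish, so that $L$ splits as a sum of the $\Lambda$-generator and the stated second-order operator in $\mathfrak{a}$; this follows because $d\mathfrak{a}_j$ is built from $\mathrm{Tr}$ against $\Lambda_j$ of the antisymmetric combination $dw_j^* w_j - w_j^* dw_j$, whereas $d\Lambda_\ell$ from the proof of Theorem \ref{thm:generator-process-of-interest} is built from the symmetric combination $dw_j^* w_j + w_j^* dw_j$ (together with a vanishing-drift argument), and the relevant $\gamma$-covariation contractions cancel by symmetry/antisymmetry of the $Q$-rotated noise. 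Assembling the pieces and invoking the standard identification of a diffusion's generator from its Itô characteristics then yields $L$. The main obstacle is purely computational bookkeeping: carefully tracking the $\Lambda_j^{1/2}$, $\Lambda_j^{-1/2}$, $Q_j$ factors and the index contractions so that the traces telescope correctly; I anticipate no conceptual difficulty beyond what already appears in the proof of Theorem \ref{thm:generator-process-of-interest}.
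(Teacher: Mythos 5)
The overall strategy—compute the Itô covariations of $(\Lambda,\mathfrak a)$ and read off the generator—is indeed the same as the paper's. However, the specific route you describe for $(d\mathfrak a_j)^2$ does not work as written. You invoke the formulas for $dw_j^*M\,dw_\ell$ and $dw_jM\,dw_\ell^*$, but $(d\mathfrak a_j)^2$ expands (after dropping the vanishing $dw_j\cdot dw_j$ and $d\overline w_j\cdot d\overline w_j$ parts) to $\tfrac12\,\mathrm{Tr}(\Lambda_j dw_j^*w_j)\,\mathrm{Tr}(\Lambda_j w_j^*dw_j)$, a product of two scalar traces; this is not a trace of the matrix $dw_j^*Mdw_j$ and must be handled via the pointwise covariations $(dw_j)_{pq}(d\overline w_j)_{rs}=2(I_{n-m}+w_jw_j^*)_{pr}(I_m+w_j^*w_j)_{sq}\,dt$. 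Moreover, your claimed intermediate simplification is algebraically incorrect: using $w_u^*w_j=-I_m$ and $w_j^*w_j=\Lambda_j^{-1}-I_m$ one gets $(w_u-w_j)^*(w_u-w_j)=\Lambda_u^{-1}+\Lambda_j^{-1}$, so
\begin{align*}
\sum_{u\neq j}\mathrm{Tr}\bigl((w_u-w_j)^*(w_u-w_j)\Lambda_u\bigr)
=\sum_{u\neq j}\bigl(m+\mathrm{Tr}(\Lambda_j^{-1}\Lambda_u)\bigr)
=(k-1)m+\mathrm{Tr}(\Lambda_j^{-1}),
\end{align*}
which is \emph{not} $\mathrm{Tr}(\Lambda_j^{-1})-m$; the quantity $\sum_{u\neq j}\mathrm{Tr}(\Lambda_j^{-1}\Lambda_u)=\mathrm{Tr}(\Lambda_j^{-1})-m$ you write in the same sentence is correct and is indeed what arises, but it arrives by a different contraction (through the rank-one structure of the $w$-covariation, which, after the $\Lambda_j^{\pm 1}$ cancellations, leaves $\mathrm{Tr}(w_j^*w_j)=\mathrm{Tr}(\Lambda_j^{-1})-m$). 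So you have the right endpoint but the route you describe would not produce it, and the intermediate algebra as stated is wrong.

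The second gap is the claim that $d\mathfrak a_j\,d\Lambda_{\ell,\gamma\delta}=0$ ``by symmetry/antisymmetry of the $Q$-rotated noise.'' This is a placeholder rather than an argument: the martingale part of $d\Lambda_\ell$ is indeed the symmetric combination and $d\mathfrak a_j$ the antisymmetric one, but the covariation $(dw_j)_{pq}(d\overline w_j)_{rs}$ does not factor into symmetric and antisymmetric parts, and the vanishing is not term-by-term. In the paper's computation the cancellation only appears after carrying out the full index contraction, where two groups of terms of the form $\pm i(\Lambda_\ell\Lambda_j^{-1}\Lambda_\ell)_{pq}$ cancel against each other; for $j=\ell$ the mechanism is ultimately that $\Lambda_j$ commutes with $\Lambda_j^{-1}$, and for $j\neq\ell$ it additionally uses $w_j^*w_\ell=-I_m$. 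If you want to make your symmetry heuristic precise you would need to carry out the same trace bookkeeping the paper does; as it stands, this step is missing.
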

\begin{proof}
    All terms except those of the form $\frac{\partial^2}{\partial\mathfrak{a}_j\partial\Lambda_m}$ and $\frac{\partial^2}{\partial\mathfrak{a}_j\partial\mathfrak{a}_m}$ follow straightforwardly from theorem \ref{thm:generator-process-of-interest}.
    For these cross-terms we calculate, using $w_j^*w_{\ell} =-I_{m}$ for $1\leq j\neq\ell\leq k$, that
    \begin{align*}
        d\mathfrak{a}_{j}d\Lambda_{\ell ,pq} =&\frac{-i}{2}\sum_{\alpha ,\beta ,\delta ,\zeta=1}^m\sum_{\gamma ,\varepsilon =1}^{n-m} \Lambda_{j,\alpha\beta}
        (d\overline{w}_{j ,\gamma\beta }w_{j,\gamma\alpha} -\overline{w}_{j,\gamma\beta }dw_{j,\gamma\alpha })
        \Lambda_{\ell,p\delta }(d\overline{w}_{\ell ,\varepsilon\delta}w_{\ell ,\varepsilon\zeta} +\overline{w}_{\ell ,\varepsilon\delta}dw_{\ell ,\varepsilon\zeta})\Lambda_{\ell,\zeta q} \\
        =&\frac{i}{2}\sum_{\alpha ,\beta ,\delta ,\zeta =1}^m \sum_{\gamma ,\varepsilon =1}^{n-m}\Lambda_{j,\alpha\beta }\Lambda_{\ell ,p\delta } \Lambda_{\ell ,\zeta q}  (w_{j,\gamma\alpha}w_{\ell ,\varepsilon\zeta}d\overline{w}_{j,\gamma\beta }d\overline{w}_{\ell ,\varepsilon\delta} +w_{j,\gamma\alpha }\overline{w}_{\ell ,\varepsilon\delta} d\overline{w}_{j,\gamma\beta }dw_{\ell ,\varepsilon\zeta} \\
        &\qquad\qquad\qquad\qquad\qquad\qquad-\overline{w}_{j,\gamma\beta }w_{\ell ,\varepsilon\zeta } dw_{j,\gamma\alpha }d\overline{w}_{\ell ,\varepsilon\delta }
        -\overline{w}_{j ,\gamma\beta }\overline{w}_{\ell ,\varepsilon\delta}dw_{j,\gamma\alpha }dw_{\ell ,\varepsilon\zeta} )\\
        =&i\sum_{\alpha ,\beta ,\delta ,\zeta =1}^m \sum_{\gamma ,\varepsilon =1}^{n-m} \Lambda_{j,\alpha\beta }\Lambda_{\ell ,p\delta } \Lambda_{\ell ,\zeta q} ( -w_{j,\gamma\alpha}w_{\ell ,\varepsilon\zeta}
        (\overline{w}_{\ell} -\overline{w}_j)_{\gamma\delta}(\overline{w}_j -\overline{w}_{\ell})_{\varepsilon\beta} \\
        &\qquad\qquad +\delta_{j\ell }w_{j,\gamma\alpha}\overline{w}_{j,\varepsilon\delta} (I_{n-m} +w_jw_j^*)_{\varepsilon\gamma}\Lambda^{-1}_{j,\beta\zeta}
        -\delta_{j\ell }\overline{w}_{j,\gamma\beta }w_{j,\varepsilon\zeta}(I_{n-m} +w_jw_j^*)_{\gamma\varepsilon }\Lambda^{-1}_{j,\delta\alpha}\\
        &\qquad\qquad -\overline{w}_{j,\gamma\beta }\overline{w}_{\ell ,\varepsilon\delta }(w_{\ell} -w_j)_{\gamma\zeta}(w_{j} -w_{\ell})_{\varepsilon\alpha } )\\
        =&-i(\Lambda_{\ell}(w_{\ell}^* -w_j^*)w_j\Lambda_j (w_j^* -w_{\ell}^*)w_{\ell}\Lambda_{\ell })_{pq}
        +i\delta_{j\ell}(\Lambda_{\ell}w_j^*(I_{n-m} +w_jw_j^*)w_j\Lambda_{\ell})_{pq} \\
        &-i\delta_{j\ell }(\Lambda_{\ell}w_j^*(I_{n-m}+w_jw_j^*)w_j\Lambda_{\ell})_{pq}
        +i(\Lambda_{\ell} w_{\ell}^* (w_j-w_{\ell})\Lambda_jw_j^*(w_{\ell} -w_j)\Lambda_{\ell})_{pq} \\
        =&-i(\Lambda_{\ell}\Lambda_{j}^{-1}\Lambda_{\ell})_{pq} +i(\Lambda_{\ell}\Lambda_{j}^{-1}\Lambda_{\ell})_{pq} =0 .
    \end{align*}

    For the other terms we calculate
    \begin{align*}
        d\mathfrak{a}_jd\mathfrak{a}_{\ell} =&-\frac{1}{4}\sum_{p,q,\alpha ,\gamma =1}^m\sum_{\beta ,\delta =1}^{n-m} \Lambda_{j,p\alpha}(d\overline{w}_{j,\beta\alpha }w_{j,\beta p} -\overline{w}_{j,\beta\alpha }dw_{j,\beta p})
        \Lambda_{\ell ,q\gamma}(d\overline{w}_{\ell,\delta\gamma }w_{\ell ,\delta q} -\overline{w}_{\ell,\delta\gamma }dw_{\ell ,\delta q})\\
        =&-\frac{1}{4}\sum_{p,q,\alpha ,\gamma =1}^m\sum_{\beta ,\delta =1}^{n-m} \Lambda_{j,p\alpha}\Lambda_{\ell ,q\gamma} (d\overline{w}_{j,\beta\alpha }w_{j,\beta p} d\overline{w}_{\ell ,\delta\gamma }w_{\ell ,\delta q} 
        -d\overline{w}_{j,\beta\alpha }w_{j,\beta p} 
        \overline{w}_{\ell ,\delta\gamma }dw_{\ell ,\delta q}\\
        &\qquad\qquad\qquad\qquad\qquad\qquad -\overline{w}_{j,\beta\alpha }dw_{j,\beta p}d\overline{w}_{\ell ,\delta\gamma }w_{\ell ,\delta q} 
        +\overline{w}_{j,\beta\alpha }dw_{j,\beta p}\overline{w}_{\ell ,\delta\gamma }dw_{\ell ,\delta q}) \\
        =&\frac{1}{2}\sum_{p,q,\alpha ,\gamma =1}^m\sum_{\beta ,\delta =1}^{n-m} \Lambda_{j,p\alpha}\Lambda_{\ell ,q\gamma} \big( w_{j,\beta p}w_{\ell ,\delta q}\left(\overline{w}_{\ell} -\overline{w}_j\right)_{\beta\gamma}
        \left( \overline{w}_j-\overline{w}_{\ell }\right)_{\delta\alpha}\\
        &\qquad\quad +\delta_{j\ell }w_{j,\beta p}\overline{w}_{j,\delta\gamma}\left( I_{n-m} +w_jw_j^*\right)_{\delta\beta} \Lambda_{j,\alpha q}^{-1}
        +\delta_{j\ell }\overline{w}_{j,\beta\alpha}w_{j,\delta q}\left( I_{n-m} +w_jw_j^*\right)_{\beta\delta} \Lambda_{j,\gamma p}^{-1}\\
        &\qquad\quad +\overline{w}_{j,\beta\alpha}\overline{w}_{\ell ,\delta\gamma }\left( w_{\ell }-w_j\right)_{\beta q}
        \left( w_j-w_{\ell}\right)_{\delta p}\big) dt.
    \end{align*}
    Simplification gives the desired generator.
\end{proof}

\subsection{Horizontal Brownian motion}
Recall the Riemannian submersion $p:\mathcal{D}_m\rightarrow\mathcal{O}_m$ defined in \eqref{eq:Riemannian-submersion-coordinates} and the Laplace-Beltrami operator $\Delta_{F_{m,2m,\dots ,km}(\mathbb{C}^n)}$ from theorem \ref{thm:Laplace-Beltrami-operator-flags}.
The operator $\Delta_{\mathcal{H}_F}$ satisfying
\begin{align*}
    \Delta_{F_{m,2m,\dots ,km}(\mathbb{C}^n)} (f\circ p)=(\Delta_{\mathcal{H}_F}f)\circ p\textrm{ for } f\in C^{\infty}(\mathcal{D})
\end{align*}
is called the horizontal Laplacian of the fibration $\mathbf{U}(m)\rightarrow\mathbf{U}(n)\rightarrow F_{m,2m,\dots ,km}(\mathbb{C}^n)$.

\begin{definition}
    A \emph{horizontal Brownian motion} with respect to the fibration $\mathbf{U}(m)\rightarrow\mathbf{U}(n)\rightarrow F_{m,2m,\dots ,km}(\mathbb{C}^n)$ is a diffusion on $\mathcal{D}$ with generator $\frac{1}{2}\Delta_{\mathcal{H}_F}$.
\end{definition}

\begin{theorem}\label{thm:horizontal-MB-m}
    Let $(w(t))_{t\geq 0}$ be a Brownian motion on $F_{m,2m,\dots ,km}(\mathbb{C}^n)$ and let $(\mathfrak{a}(t))_{t\geq 0}$ be its stochastic area process.
    The process
    \begin{align*}
        X(t) :=\scalemath{0.85}{\begin{pmatrix}
            e^{i\mathfrak{a}_1(t)}\Theta_1 (t)w_{11}(t)(I_m +w_1^*(t)w_1(t))^{-\frac{1}{2}} & \dots & e^{i\mathfrak{a}_{k+1}(t)}\Theta_{k+1} (t)w_{1(k+1)}(t)(I_m +w_{k+1}^*(t)w_{k+1}(t))^{-\frac{1}{2}} \\
            \vdots & \ddots & \vdots \\
            e^{i\mathfrak{a}_1(t)}\Theta_1 (t)w_{k1}(t)(I_m +w_1^*(t)w_1(t))^{-\frac{1}{2}} & \dots & e^{i\mathfrak{a}_{k+1}(t)}\Theta_{k+1} (t)w_{k(k+1)}(t)(I_m +w_{k+1}^*(t)w_{k+1}(t))^{-\frac{1}{2}} \\
            e^{i\mathfrak{a}_1(t)}\Theta_1 (t)(I_m +w_1^*(t)w_1(t))^{-\frac{1}{2}} & \dots & e^{i\mathfrak{a}_{k+1}(t)}\Theta_{k+1} (t)(I_m +w_{k+1}^*(t)w_{k+1}(t))^{-\frac{1}{2}}
        \end{pmatrix}}
    \end{align*}
    is a horizontal Brownian motion on $\mathbf{U}(n)$ with respect to the fibration $\mathbf{U}(m)^{k+1}\rightarrow\mathbf{U}(n)\rightarrow F_{m,2m,\dots ,km}(\mathbb{C}^n)$, where $(\Theta (t))_{t\geq 0}$ is the $\mathbf{SU}(m)^{k+1}$-valued process satisfying the system of Strato-novich stochastic differential equations
    \begin{align*}
        d\Theta_j(t) =\circ d\eta_j(t)\Theta_j(t) +\Theta_j(t)\circ d\mathfrak{a}_j(t)
    \end{align*}
    with
    \begin{align*}
        \scalemath{0.98}{d\eta_j (t):=\frac{1}{2}\left(\Lambda^{\frac{1}{2}}_j(t)(\circ dw^*_j(t)w_j(t)-w^*_k(t)\circ dw_j(t))\Lambda_{j}^{-\frac{1}{2}}(t) -\Lambda^{\frac{1}{2}}_j(t)\circ d\Lambda_j^{-\frac{1}{2}}(t) +\circ d\Lambda_j^{-\frac{1}{2}}(t)\Lambda_j^{\frac{1}{2}}(t)\right) }.
    \end{align*}
\end{theorem}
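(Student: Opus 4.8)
The plan is to recognise $X(t)$ as the stochastic horizontal lift of the Brownian motion $w(t)$ through the Riemannian submersion $\pi_F\colon\mathbf U(n)\to F_{m,2m,\dots,km}(\mathbb C^n)$ of \eqref{eq:fibration-Um}, in the spirit of \cite{baudoin2025fullflag}, and then to invoke the general principle — valid for every Riemannian submersion — that the stochastic horizontal lift of a Brownian motion on the base is a diffusion whose generator is one half of the horizontal Laplacian (see \cite[\textsection 2]{Baudoin2024-is}). Since the fibres of $\pi_F$ are totally geodesic this lift is exactly a horizontal Brownian motion in the sense defined above, and its projection is genuinely a Brownian motion on $F_{m,2m,\dots,km}(\mathbb C^n)$. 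Three things then need to be checked: (i) $X(t)$ is well defined and takes values in $\mathcal D_m\subseteq\mathbf U(n)$; (ii) $\pi_F(X(t))=w(t)$; and (iii) the Stratonovich differential $\circ\,dX(t)$ is horizontal for the fibration $\mathbf U(m)^{k+1}\to\mathbf U(n)\to F_{m,2m,\dots,km}(\mathbb C^n)$, that is, $\sum_{\ell=1}^{k+1}(\circ\,dX_{\ell j}(t))^{*}X_{\ell j}(t)=0$ for $1\le j\le k+1$, which is the description of $\mathcal H_F$ from \textsection\ref{sec:canonical-torus-bundle}.

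For (i) I would first observe that the Stratonovich system for $\Theta$ is well posed over the (almost surely infinite) lifetime of $w$, its coefficients being smooth functions of $w$, $\Lambda$ and of the increments of $\mathfrak a$ and $\eta$; moreover, as one checks directly from the formulas for $d\eta_j$ and $d\mathfrak a_j$, the increment $\circ\,d\eta_j\,\Theta_j+\Theta_j\circ d\mathfrak a_j$ is tangent to $\mathbf{SU}(m)$ at $\Theta_j$, so $\Theta(t)\in\mathbf{SU}(m)^{k+1}$ for all $t$. Writing $X(t)$ as the image of $(\mathfrak a(t),\Theta(t),w(t))$ under the evident $\mathbf{SU}(m)^{k+1}$-extension of the trivialisation \eqref{eq:trivialisation-canonical-torus-bundle}, unitarity of $X(t)$ reduces to a block computation: the diagonal blocks of $X(t)^{*}X(t)$ telescope through $\Lambda_j^{1/2}(I_m+w_j^{*}w_j)\Lambda_j^{1/2}=I_m$ (using $\Theta_j^{*}\Theta_j=I_m$), while the off-diagonal blocks vanish by the relations $w_i^{*}w_j=-I_m$ defining $\mathcal O_m$; invertibility of the bottom blocks then places $X(t)$ in $\mathcal D_m$. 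Claim (ii) is then immediate from the same description, since projecting \eqref{eq:trivialisation-canonical-torus-bundle} to $F_{m,2m,\dots,km}(\mathbb C^n)$ forgets the fibre coordinates $(\mathfrak a,\Theta)$.

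The substantive step is (iii). I would differentiate $X(t)$ block by block using the Stratonovich product rule, substitute \eqref{eq:derivative-Brownian-motion} for $\circ\,dw_j$ and the defining equation $d\Theta_j=\circ\,d\eta_j\,\Theta_j+\Theta_j\circ d\mathfrak a_j$, and then evaluate the $\mathfrak u(m)^{k+1}$-component of $X(t)^{-1}\circ dX(t)$, equivalently the quantities $\sum_{\ell}(\circ\,dX_{\ell j})^{*}X_{\ell j}$. The point is that the canonical connection one-form $\omega_j$ of the Stiefel fibration $\mathbf U(m)\to V_{n,m}(\mathbb C)\to Gr_{n,m}(\mathbb C)$ used in the proof of Lemma \ref{lemma:connection-form-canonical-torus-bundle} (compare \eqref{eq:fundamental-decomposition-m}) splits into its $\mathfrak u(1)$-trace part, whose contribution along $X(t)$ is the increment $\circ\,d\mathfrak a_j$, and its $\mathfrak{su}(m)$-traceless part, whose contribution is $\circ\,d\eta_j$; because $\omega_j$ annihilates precisely the horizontal space of the Stiefel submersion, these two corrections together cancel the vertical component of $\circ\,dw_j$ read in the frame \eqref{eq:trivialisation-canonical-torus-bundle}, leaving $\circ\,dX(t)\in\mathcal H_F$. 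The conjugations by $\Lambda_j^{\pm1/2}$ in $d\eta_j$ are there precisely so that the term $\tfrac12(\circ\,dw_j^{*}w_j-w_j^{*}\circ dw_j)$ becomes an $\mathfrak{su}(m)$-valued quantity in that frame, while the $-\Lambda_j^{1/2}\circ d\Lambda_j^{-1/2}+\circ d\Lambda_j^{-1/2}\Lambda_j^{1/2}$ terms absorb the contribution of the normalising factor $(I_m+w_j^{*}w_j)^{-1/2}=\Lambda_j^{1/2}$. Once horizontality is established, $X(t)$ is the horizontal lift of $w(t)$ and hence a horizontal Brownian motion. The obstacle I anticipate is essentially computational: cleanly isolating the vertical part of the Stratonovich expansion of $\circ\,dX$ and matching it term for term against the definitions of $\eta_j$ and $\mathfrak a_j$ — the case $m=1$ is \cite{baudoin2025fullflag}, where no $\mathbf{SU}(m)$-factor occurs, so the new difficulty is handling the non-abelian $\mathfrak{su}(m)$-part of the fibre motion.
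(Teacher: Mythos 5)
Your plan is sound and the three verification points you list are exactly what one needs, but your route differs from the paper's in a structurally significant way. The paper first produces a $\mathbf U(m)^{k+1}$-valued fibre process $\tilde\Theta_j$ solving $d\tilde\Theta_j=\circ d\eta_j\,\tilde\Theta_j$ and cites the analogue of \cite[Theorem 8.2.2]{Baudoin2024-is} to get a horizontal Brownian motion $(\tilde\Theta_j w_{\ell j}(I_m+w_j^*w_j)^{-1/2})_{\ell,j}$; it then observes via $\circ d\det(\tilde\Theta_j)=\det(\tilde\Theta_j)\,\mathrm{Tr}(\circ d\eta_j)$ that $\det(\tilde\Theta_j)=e^{i\mathfrak a_j}$, factors out this phase to define the $\mathbf{SU}(m)^{k+1}$-valued $\Theta_j$, and reads off its Stratonovich equation by the quotient rule. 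Your proposal instead verifies horizontality of the final expression $X(t)$ directly by computing $\sum_\ell(\circ dX_{\ell j})^*X_{\ell j}$, which amounts to re-proving the cited lift theorem for this fibration rather than invoking it, so it would be self-contained but considerably more computational. One imprecision worth flagging: you describe $\circ d\eta_j$ as the \emph{traceless} ($\mathfrak{su}(m)$-) part of the connection one-form with $\circ d\mathfrak a_j$ as its scalar trace part, but the $\eta_j$ in the statement is not traceless --- the paper's argument hinges precisely on $\mathrm{Tr}(\circ d\eta_j)=\circ d\mathfrak a_j$ being nonzero, which is what makes the determinant of $\tilde\Theta_j$ evolve like $e^{i\mathfrak a_j}$ and lets one split off an $\mathbf{SU}(m)$-valued piece. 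If you adopt the direct route, the cleanest version of your step (iii) is to set $\tilde\Theta_j:=e^{i\mathfrak a_j}\Theta_j$, check from the two given SDEs that $d\tilde\Theta_j=\circ d\eta_j\,\tilde\Theta_j$, and only then verify that this single equation makes $\circ dX$ lie in $\mathcal H_F$; otherwise the two fibre terms $\circ d\eta_j\,\Theta_j$ and $\Theta_j\circ d\mathfrak a_j$ get tangled in the block computation.
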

\begin{proof}
    From \cite[Lemma 8.2.1.]{Baudoin2024-is} it can easily be checked that the connection form of the fibration $\mathbf{U}(m)^{k+1}\rightarrow\mathbf{U}(m)\rightarrow F_{m,2m,\dots ,km}(\mathbb{C})$ is given by $\omega =(\omega_1,\dots ,\omega_{k+1})$, where
    \begin{align*}
        \omega_j :=\frac{1}{2}(w_j^*dw_j-dw^*_jw_j),
    \end{align*} 
    see also \cite[Lemma 4.1.]{baudoin2025fullflag}.
    Similarly to the proof of \cite[Theorem 8.2.2.]{Baudoin2024-is} one sees that a horizontal Brownian motion on $\mathbf{U}(n)$ is given by
    \begin{align*}
        \begin{pmatrix}
            \tilde{\Theta}_{1} (t)w_{11}(t)(I_m +w_1^*(t)w_1(t))^{-\frac{1}{2}} & \dots & \tilde{\Theta}_n (t) w_{1n}(t)(I_m +w_n^*(t)w_n(t))^{-\frac{1}{2}} \\
            \vdots & \ddots & \vdots \\
            \tilde{\Theta }_1 (t)w_{k1}(t)(I_m +w_1^*(t)w_1(t))^{-\frac{1}{2}} & \dots & \tilde{\Theta}_{n}(t) w_{kn}(t)(I_m +w_n^*(t)w_n(t))^{-\frac{1}{2}} \\
            \tilde{\Theta}_1 (t)(I_m +w_1^*(t)w_1(t))^{-\frac{1}{2}} & \dots & \tilde{\Theta}_{n}(t)(I_m +w_n^*(t)w_n(t))^{-\frac{1}{2}}
        \end{pmatrix} ,
    \end{align*}
    where $(\tilde{\Theta}(t))_{t\geq 0}$ solves the Stratonovich stochastic differential equation
    \begin{align*}
        d\tilde{\Theta}_j(t) =&\circ d\eta_j (t)\tilde{\Theta}_j (t) .
    \end{align*}
    
    We compute using the Itô's formula in Stratonovich form
    \begin{align*}
        \circ d\det (\tilde{\Theta}_j)&=\det (\tilde{\Theta}_j)\mathrm{Tr}(\tilde{\Theta}_j^{-1}\circ d\tilde{\Theta}_j)
        =\det (\tilde{\Theta}_j)\mathrm{Tr}(\circ d\eta_j)
        =\det (\tilde{\Theta}_j)\circ d\mathfrak{a}_j .
    \end{align*}
    This means in particular that $\det (\tilde{\Theta}_j (t)) =e^{i\mathfrak{a}_j(t)}$.
    Now we define the $\mathbf{SU}(m)$-valued processes $\Theta_j(t) :=e^{-i\mathfrak{a}_j(t)}\tilde{\Theta}_j(t)$ for $1\leq j\leq k+1$.
    By the above calculations
    \begin{align*}
        \circ d\Theta_j =&\frac{\circ d\tilde{\Theta}_j}{\det (\tilde{\Theta}_j )} +\frac{\tilde{\Theta}_j\circ d\det (\tilde{\Theta}_j)}{\det (\tilde{\Theta}_j)^2}
        =\circ d\eta_j\Theta_j +\Theta_j\circ d\mathfrak{a}_j.
    \end{align*}
\end{proof}

From this we can express a Brownian motion on the canonical torus bundle in terms of a Brownian motion on $F_{m,2m,\dots ,km}(\mathbb{C}^n)$ and their area processes.

\begin{corollary}\label{cor:Brownian-motion-torus-bundle}
    Let $(w(t))_{t\geq 0}$ be a Brownian motion on $F_{m,2m,\dots ,km}(\mathbb{C}^n)$ and let $(\mathfrak{a}(t))_{t\geq 0}$ be its stochastic area process.
    The projection of the process
    \begin{align*}
        \tilde{X}(t) :=\scalemath{0.84}{\begin{pmatrix}
            e^{i\mathfrak{a}_1(t) +i\beta_1(t)}w_{11}(t)(I_m +w_1^*(t)w_1(t))^{-\frac{1}{2}} & \dots & e^{i\mathfrak{a}_{k+1}(t) +i\beta_{k+1}(t)}w_{1(k+1)}(t)(I_m +w_{k+1}^*(t)w_{k+1}(t))^{-\frac{1}{2}} \\
            \vdots & \ddots & \vdots \\
            e^{i\mathfrak{a}_1(t)+i\beta_1(t)}w_{k1}(t)(I_m +w_1^*(t)w_1(t))^{-\frac{1}{2}} & \dots & e^{i\mathfrak{a}_{k+1}(t) +i\beta_{k+1}(t)}w_{k(k+1)}(t)(I_m +w_{k+1}^*(t)w_{k+1}(t))^{-\frac{1}{2}} \\
            e^{i\mathfrak{a}_1(t) +i\beta_1(t)}(I_m +w_1^*(t)w_1(t))^{-\frac{1}{2}} & \dots & e^{i\mathfrak{a}_{k+1}(t) +i\beta_{k+1}(t)}(I_m +w_{k+1}^*(t)w_{k+1}(t))^{-\frac{1}{2}}
        \end{pmatrix}}
    \end{align*}
    down to $P_{m,2m,\dots ,km}(\mathbb{C}^n)$ is a Brownian motion on $P_{m,2m,\dots ,km}(\mathbb{C}^n)$, where $(\beta (t))_{t\geq 0}$ is a Brownian motion on $\mathbb{R}^{k+1}$ independent of $(w(t))_{t\geq 0}$.
\end{corollary}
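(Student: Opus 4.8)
The plan is to exploit that, by the commutative diagram \eqref{eq:cd-canonical-torus-bundle}, the map $\pi\colon P_{m,2m,\dots,km}(\mathbb{C}^n)\to F_{m,2m,\dots,km}(\mathbb{C}^n)$ is a \emph{totally geodesic} Riemannian submersion: it is induced by the totally geodesic Riemannian submersions $\pi_P$ and $\pi_F$, its fibre is $\mathbf{U}(m)^{k+1}/\mathbf{SU}(m)^{k+1}\cong\mathbf{U}(1)^{k+1}$ with its flat metric, and the commuting vertical fields $\partial/\partial\theta_1,\dots,\partial/\partial\theta_{k+1}$ form a global parallel orthogonal frame of $\mathcal V$. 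First I would record what this forces on the Laplacian: since the fibres are totally geodesic, hence minimal, the horizontal Laplacian $\Delta_{\mathcal H}$ of $\pi$ is $\pi$-related to $\Delta_{F_{m,2m,\dots,km}(\mathbb{C}^n)}$ with no first-order defect, while the vertical part is the flat torus Laplacian; altogether $\Delta_{P_{m,2m,\dots,km}(\mathbb{C}^n)}=\Delta_{\mathcal H}+c\sum_{j=1}^{k+1}\partial^2/\partial\theta_j^2$ for a fixed constant $c$. This places us exactly in the framework of \cite[\textsection 3.5]{Baudoin2024-is}, where a Brownian motion on the total space of such a bundle is built from a Brownian motion on the base, its horizontal lift, and an independent Brownian motion along the flat fibre.

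The second step is to carry this out in the trivialisation \eqref{eq:trivialisation-canonical-torus-bundle} over $\pi_P(\mathcal D_m)$. Take $(w(t))_{t\ge0}$ to be the Brownian motion on $F_{m,2m,\dots,km}(\mathbb{C}^n)$ from \textsection\ref{subsec:generator-BM-flag} and let the fibre coordinate solve the Stratonovich equation $\eta_j(\circ d\,\cdot)=\circ d\beta_j$ with $(\beta(t))_{t\ge0}$ a Brownian motion on $\mathbb{R}^{k+1}$ independent of $(w(t))_{t\ge0}$. By the decomposition \eqref{eq:fundamental-decomposition-m} of the connection form and the definition \eqref{eq:stochastic-area-process} of the stochastic areas, this reads $\circ d\theta_j=\circ d\mathfrak a_j+\circ d\beta_j$, so $\theta_j(t)=\mathfrak a_j(t)+\beta_j(t)$ for the natural initial conditions. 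Substituting $\theta=\mathfrak a+\beta$ into \eqref{eq:trivialisation-canonical-torus-bundle} gives precisely the matrix $\tilde X(t)$ of the statement (the phase $e^{i\theta_j/m}$ becoming $e^{i(\mathfrak a_j+\beta_j)}$ under the fixed normalisation of the fibre coordinate); by the first step its projection to $P_{m,2m,\dots,km}(\mathbb{C}^n)$ has generator $\tfrac12\Delta_{P_{m,2m,\dots,km}(\mathbb{C}^n)}$, i.e.\ is a Brownian motion on $P_{m,2m,\dots,km}(\mathbb{C}^n)$. The requisite independence of $\beta$ from $w$ is free: motion along the fibre of $\pi$ is annihilated by $d\pi$, so it does not disturb the $F_{m,2m,\dots,km}(\mathbb{C}^n)$-valued part.

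I expect the main obstacle to be the identification of the horizontally lifted part, namely that in the trivialisation the horizontal lift of $(w(t))_{t\ge0}$ is the process with $\theta=\mathfrak a$. In Stratonovich form this is immediate from \eqref{eq:fundamental-decomposition-m}, but justifying \eqref{eq:fundamental-decomposition-m} itself (equivalently, that the form $\eta$ of Lemma \ref{lemma:connection-form-canonical-torus-bundle} is a metric connection for $\pi$, which is where the bi-invariance of the metric on $\mathbf U(n)$ enters) and then checking that the generator of the horizontal lift plus $\tfrac c2\sum_j\partial^2/\partial\theta_j^2$ really equals $\tfrac12\Delta_{P_{m,2m,\dots,km}(\mathbb{C}^n)}$ is the technical heart. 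An alternative that avoids re-deriving this is to feed in Theorem \ref{thm:horizontal-MB-m}: its process $X(t)$ is a horizontal Brownian motion on $\mathbf U(n)$ for the finer fibration $\mathbf U(m)^{k+1}\to\mathbf U(n)\to F_{m,2m,\dots,km}(\mathbb{C}^n)$, hence $\pi_P(X(t))$ is a horizontal Brownian motion on $P_{m,2m,\dots,km}(\mathbb{C}^n)$ for $\pi$; since each $\Theta_j\in\mathbf{SU}(m)$ is killed by $\pi_P$, a short computation rewrites $\pi_P(X(t))$ in the trivialisation \eqref{eq:trivialisation-canonical-torus-bundle} as the horizontal lift of a Brownian motion on the flag manifold with $\theta=\mathfrak a$, after which one only has to superpose the independent fibre Brownian motion $\beta$ to pass from the horizontal Brownian motion to a genuine Brownian motion on $P_{m,2m,\dots,km}(\mathbb{C}^n)$. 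Either way, the remaining work — Stratonovich corrections, the $\Theta_j$-cancellation, and matching normalisations — is routine.
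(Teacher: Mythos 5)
Your proposal offers two routes, and the second (``alternative'') one is precisely the paper's own proof: project the horizontal Brownian motion $X(t)$ of Theorem \ref{thm:horizontal-MB-m} by $\pi_P$, observe that the $\mathbf{SU}(m)^{k+1}$-valued factor $\Theta$ is annihilated by $\pi_P$ so that the projection coincides with that of the process obtained by deleting $\Theta$, then superpose an independent flat-torus Brownian motion via the decomposition \eqref{eq:decomposition-Laplace-canonical-torus} of $\Delta_{P_{m,2m,\dots,km}(\mathbb{C}^n)}$ and the totally geodesic property. Your primary route is genuinely different and in some ways cleaner: it never invokes Theorem \ref{thm:horizontal-MB-m} (so avoids the SDE for $\Theta$ entirely), instead working intrinsically on $P_{m,2m,\dots,km}(\mathbb{C}^n)$ in the trivialisation \eqref{eq:trivialisation-canonical-torus-bundle} and identifying the horizontal lift of $w$ as the process with $\circ d\theta_j = \circ d\mathfrak{a}_j$ through the decomposition \eqref{eq:fundamental-decomposition-m} of the connection form $\eta$. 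The trade-off is that \eqref{eq:fundamental-decomposition-m} is recorded in the paper without proof and is otherwise used only in the subsequent Corollary \ref{cor:connection-process-BM}; your primary route essentially reverses the order in which these two results are deduced, proving Corollary \ref{cor:connection-process-BM} first in spirit and reading the present statement off from it. What the paper's (and your alternative) route buys is that one only uses \eqref{eq:cd-canonical-torus-bundle} and a black-boxed Theorem \ref{thm:horizontal-MB-m}; what your primary route buys is that it generalises more readily to bundles where no analogue of Theorem \ref{thm:horizontal-MB-m} is available.

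Two small points to tighten. First, you write $\Delta_P = \Delta_{\mathcal{H}} + c\sum_j \partial^2/\partial\theta_j^2$ with $c$ unspecified; the statement needs $c=1$ so that the superposed fibre process is a \emph{standard} Brownian motion on $\mathbb{R}^{k+1}$, and this is exactly what \eqref{eq:decomposition-Laplace-canonical-torus} asserts (it comes down to $\eta_j(\partial/\partial\theta_i)=\delta_{ij}$, established in the proof of Lemma \ref{lemma:connection-form-canonical-torus-bundle}), so you should say so rather than carry an undetermined constant into the final identification. Second, the matching of $e^{i\theta_j/m}$ from \eqref{eq:trivialisation-canonical-torus-bundle} with the phases $e^{i(\mathfrak{a}_j+\beta_j)}$ in $\tilde X(t)$ is a genuine normalisation to check, which you flag but do not carry out; note this mismatch already appears between Theorem \ref{thm:horizontal-MB-m} and \eqref{eq:trivialisation-canonical-torus-bundle} in the paper's own formulas, so it is a bookkeeping issue in both approaches rather than a gap specific to yours.
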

\begin{proof}
    By the commutative diagram \eqref{eq:cd-canonical-torus-bundle}, the projection of the process $(X(t))_{t\geq 0}$, given in theorem \eqref{thm:horizontal-MB-m}, down to $P_{m,2m,\dots ,km}(\mathbb{C}^n)$ is a horizontal Brownian motion on the fibration corresponding to the canonical torus bundle.
    Note that the process
    \begin{align*}
        Y(t) :=\scalemath{0.9}{\begin{pmatrix}
            e^{i\mathfrak{a}_1(t)} w_{11}(t)(I_m +w_1^*(t)w_1(t))^{-\frac{1}{2}} & \dots & e^{i\mathfrak{a}_{k+1}(t)} w_{1(k+1)}(t)(I_m +w_{k+1}^*(t)w_{k+1}(t))^{-\frac{1}{2}} \\
            \vdots & \ddots & \vdots \\
            e^{i\mathfrak{a}_1(t)}w_{k1}(t)(I_m +w_1^*(t)w_1(t))^{-\frac{1}{2}} & \dots & e^{i\mathfrak{a}_{k+1}(t)}w_{k(k+1)}(t)(I_m +w_{k+1}^*(t)w_{k+1}(t))^{-\frac{1}{2}} \\
            e^{i\mathfrak{a}_1(t)}(I_m +w_1^*(t)w_1(t))^{-\frac{1}{2}} & \dots & e^{i\mathfrak{a}_{k+1}(t)}(I_m +w_{k+1}^*(t)w_{k+1}(t))^{-\frac{1}{2}}
        \end{pmatrix}}
    \end{align*}
    projects down to the same process as $(X(t))_{t\geq 0}$.
    
    The Laplace-Beltrami operator on $P_{m,2m,\dots ,km}(\mathbb{C}^n)$ satisfies
    \begin{align}\label{eq:decomposition-Laplace-canonical-torus}
        \Delta_{P_{m,2m,\dots ,km}(\mathbb{C}^n)} =\Delta_{\mathcal{H}} +\sum_{j=1}^{k+1}\frac{\partial^2}{\partial\theta_j^2},
    \end{align}
    where $\Delta_{\mathcal{H}}$ is the horizontal Laplacian of the fibration \eqref{eq:fibration-canonical-torus-bundle} which commutes with $\sum_{j=1}^{k+1}\frac{\partial^2}{\partial\theta_j^2}$ since the corresponding fibration is totally geodesic \cite[Theorem 3.1.18.]{Baudoin2024-is}.
    The conclusion follows.
\end{proof}

\begin{corollary}\label{cor:connection-process-BM}
    Let $\eta$ be the connection form of the canonical torus bundle given in lemma \ref{lemma:connection-form-canonical-torus-bundle} and $(\tilde{w}(t))_{t\geq 0}$ a Brownian motion on $P_{m,2m,\dots ,km}(\mathbb{C}^n)$.
    The process
    \begin{align*}
        \eta (t):=\int_{\tilde{w}[0,t]}\eta
    \end{align*}
    is a Brownian motion on $\mathbb{R}^{k+1}$.
\end{corollary}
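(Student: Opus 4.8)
The plan is to reduce the statement to the already-established dynamics of the area processes. First I would write the Brownian motion $(\tilde{w}(t))_{t\geq 0}$ on $P_{m,2m,\dots,km}(\mathbb{C}^n)$ in the explicit form supplied by corollary \ref{cor:Brownian-motion-torus-bundle}, i.e.\ as the projection of the process $(\tilde{X}(t))_{t\geq 0}$ built from a Brownian motion $(w(t))_{t\geq 0}$ on $F_{m,2m,\dots,km}(\mathbb{C}^n)$, its area process $(\mathfrak{a}(t))_{t\geq 0}$, and an independent Brownian motion $(\beta(t))_{t\geq 0}$ on $\mathbb{R}^{k+1}$. This is legitimate because the law of $\int_{\tilde{w}[0,t]}\eta$ depends only on the law of $(\tilde{w}(t))_{t\geq 0}$, not on the particular realisation.

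Next I would compute the pullback of the connection form along this curve. Using the decomposition \eqref{eq:fundamental-decomposition-m} of $\eta$ in the trivialisation \eqref{eq:trivialisation-canonical-torus-bundle}, with the angular coordinate $\theta_j$ replaced by $\mathfrak{a}_j(t)+\beta_j(t)$ and the $w$-coordinate by $w_j(t)$, the $j$-th component becomes
\begin{align*}
    d\big(\eta(t)\big)_j = d\mathfrak{a}_j(t) + d\beta_j(t) - \frac{i}{2}\mathrm{Tr}\left(\left(I_m+\sum_{\ell=1}^{k+1}w_{\ell j}^*w_{\ell j}\right)^{-1}\sum_{\ell=1}^{k+1}(dw^*_{\ell j}w_{\ell j}-w^*_{\ell j}dw_{\ell j})\right),
\end{align*}
understood in the Stratonovich sense, where I should note that $I_m+\sum_{\ell=1}^{k+1}w_{\ell j}^*w_{\ell j} = I_m + w_j^*w_j + w_{k+1,j}^*w_{k+1,j}$ in the block notation, whose inverse on the diagonal block is essentially $\Lambda_j(t)$ up to the bookkeeping of which $m$ rows play the role of the ``last'' block. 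The key cancellation is that the integrand of the correction term is, by the definition \eqref{eq:stochastic-area-process} of $d\mathfrak{a}_j$ as $\frac{i}{2}\mathrm{Tr}(\Lambda_j(dw_j^*w_j - w_j^*dw_j))$, exactly $d\mathfrak{a}_j(t)$ again up to sign; hence the $\mathfrak{a}_j$ terms cancel and $\big(\eta(t)\big)_j = \beta_j(t)$, which is already a Brownian motion on $\mathbb{R}^{k+1}$.

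The cleanest route, if the trivialisation bookkeeping turns out awkward, is instead to argue intrinsically: $(\eta(t))_{t\geq 0}$ is the anti-development / horizontal-lift coordinate of $(\tilde w(t))_{t\geq 0}$ along the canonical torus bundle, so by corollary \ref{cor:Brownian-motion-torus-bundle} it is (a time-reparametrised copy of) the fibre-component of a Brownian motion on the total space $P_{m,2m,\dots,km}(\mathbb{C}^n)$, which by the decomposition \eqref{eq:decomposition-Laplace-canonical-torus} of $\Delta_{P_{m,2m,\dots,km}(\mathbb{C}^n)}$ into $\Delta_{\mathcal{H}}$ plus the flat vertical Laplacian $\sum_j \partial^2/\partial\theta_j^2$ splits off as an independent $\mathbb{R}^{k+1}$-valued Brownian motion with the correct variance; indeed since $\pi_P$ is a totally geodesic Riemannian submersion with flat $(k+1)$-torus fibres, this vertical part is precisely a Euclidean Brownian motion. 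To finish I would verify the normalisation: each $\theta_j$ corresponds to the vector field $\partial/\partial\theta_j$ with $\eta(\partial/\partial\theta_i)=\delta_{ij}$ by lemma \ref{lemma:connection-form-canonical-torus-bundle}, so no rescaling is needed and $(\eta(t))_{t\geq 0}$ has the standard speed.

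The main obstacle I anticipate is the first, computational route: keeping the block indices straight in \eqref{eq:fundamental-decomposition-m} versus \eqref{eq:stochastic-area-process} (the former sums $\ell$ over all $k+1$ blocks including the ``$(k+1)$-th'', while $w_j$ in the area process runs only over the first $k$ blocks, with the relation $w_j^*w_j + I_m = \Lambda_j^{-1}$ encoding the missing block), and checking that the Itô--Stratonovich correction terms do not spoil the cancellation — they should not, since the area SDE \eqref{eq:stochastic-area-process} is stated to hold in both senses. If one prefers to avoid this, the intrinsic argument via corollary \ref{cor:Brownian-motion-torus-bundle} and \eqref{eq:decomposition-Laplace-canonical-torus} sidesteps the issue entirely, and I would present that as the proof, relegating the explicit identification $(\eta(t))_j = \beta_j(t)$ to a remark.
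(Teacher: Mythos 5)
Your first route is exactly the paper's proof: substitute the representation from corollary \ref{cor:Brownian-motion-torus-bundle}, note $\theta_j(t)=\mathfrak{a}_j(t)+\beta_j(t)$, and observe that the correction term in \eqref{eq:fundamental-decomposition-m} is precisely $d\mathfrak{a}_j$, so the area terms cancel and $\eta(t)=\beta(t)$. The ``intrinsic'' rephrasing you offer as an alternative is the same argument in disguise — the identification of $\int\eta$ with the vertical Brownian increment $\beta$ still ultimately rests on \eqref{eq:fundamental-decomposition-m} (or equivalently on the $\mathbf{U}(1)^{k+1}$-invariance of $\eta$ together with the fact that the process in corollary \ref{cor:Brownian-motion-torus-bundle} is a horizontal Brownian motion acted on by $e^{i\beta}$), so it does not genuinely bypass the coordinate computation; the paper presents the short coordinate version, which is cleaner here.
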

\begin{proof}
    By corollary \ref{cor:Brownian-motion-torus-bundle} we have
    \begin{align}\label{eq:fundamental-relation-processes}
        \theta (t)=\mathfrak{a}(t) +\beta (t),
    \end{align}
    where $\theta (t):=\int_{\tilde{w}[0,t]} d\theta$.
    The conclusion now follows from the relation \eqref{eq:fundamental-decomposition-m}.
\end{proof}

\begin{remark}\label{rmk:winding-number-angular-part}
Let $(U(t))_{t\geq 0}$ be a Brownian motion on $\mathbf{U}(n)$.
We have in distribution
\begin{align*}
    \det (U_{j(k+1)}(t)) =e^{i\theta_j (t)}\det (w^*_{j(k+1)}(t)w_{j(k+1)}(t) +I_m)^{-\frac{1}{2}} ,
\end{align*}
where $(w(t))_{t\geq 0}$ is the projection of $(U(t))_{t\geq 0}$ onto $F_{m,2m,\dots ,km}(\mathbb{C}^n)$, which is a Brownian motion on $F_{m,2m,\dots ,km}(\mathbb{C}^n)$, $\theta_j (t):=\int_{U[0,t]}d\theta_j$ is the \emph{$j^{\textrm{th}}$-winding process} and $d\theta$ the winding form from definition \ref{def:winding-form-blocks-of-equal-size}.
This means that the $j^{\textrm{th}}$-winding process around $\mathcal{D}_m^c$ is the angular part of the complex number $\det (U_{j(k+1)}(t))$.

This follows from corollary \ref{cor:Brownian-motion-torus-bundle} after noting that a horizontal Brownian motion on $\mathbf{U}(n)$ with respect to the fibration $\mathbf{SU}(m)^{k+1}\rightarrow\mathbf{U}(n)\rightarrow P_{m,2m,\dots ,km}(\mathbb{C}^n)$ is given by multiplying $(\tilde{X}(t))_{t\geq 0}$ with a $\mathbf{SU}(m)^{k+1}$-valued process from the right by \cite[Theorem 3.1.10]{Baudoin2024-is}.
This then gives a Brownian motion on $\mathbf{U}(n)$ after multiplying by an independent $\mathbf{SU}(m)^{k+1}$-valued Brownian motion as can be seen from a decomposition of $\Delta_{\mathbf{U} (n)}$ similar to the one in \eqref{eq:decomposition-Laplace-canonical-torus}.
\end{remark}

\subsection{Characteristic function of the stochastic area functional}
In this section we determine the stochastic area processes on the partial flag manifold with blocks of equal size $F_{m,2m,\dots ,km}(\mathbb{C}^n)$.
We begin with a computational lemma.

\begin{lemma}\label{lemma:derivatives-matrices}
    On the domain $\{A\in\mathbb{C}^{n\times n}\mid\det (A)\neq 0\}$ we have
    \begin{align*}
        \frac{\partial (A^{-1})_{\alpha\beta}}{\partial A_{pq}} =-A_{\alpha p}^{-1}A_{q\beta}^{-1}
    \end{align*}
    for $1\leq\alpha ,\beta ,p,q\leq n$.
    Furthermore
    \begin{align*}
        \frac{\partial\det (A)}{\partial A_{pq}} =\det (A) A^{-1}_{qp}
    \end{align*}
    for $1\leq p,q\leq n$.
\end{lemma}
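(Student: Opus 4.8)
The plan is to prove both identities by the standard device of differentiating defining relations entrywise. For the first identity, I would start from the matrix equation $A^{-1}A = I_n$, which in components reads $\sum_{\ell=1}^n (A^{-1})_{\alpha\ell}A_{\ell\gamma} = \delta_{\alpha\gamma}$ for all $1\le\alpha,\gamma\le n$. Differentiating both sides with respect to the independent variable $A_{pq}$ and using $\partial A_{\ell\gamma}/\partial A_{pq} = \delta_{\ell p}\delta_{\gamma q}$ gives
\begin{align*}
    \sum_{\ell=1}^n \frac{\partial (A^{-1})_{\alpha\ell}}{\partial A_{pq}}A_{\ell\gamma} + (A^{-1})_{\alpha p}\delta_{\gamma q} = 0 .
\end{align*}
Reading this as a matrix identity in the free index $\gamma$, multiply on the right by $A^{-1}$ (i.e. contract $\gamma$ against $(A^{-1})_{\gamma\beta}$ and sum). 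The first term collapses by $\sum_{\ell,\gamma}A_{\ell\gamma}(A^{-1})_{\gamma\beta} = \delta_{\ell\beta}$, leaving $\partial (A^{-1})_{\alpha\beta}/\partial A_{pq} = -(A^{-1})_{\alpha p}(A^{-1})_{q\beta}$, which is the claim.

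For the second identity I would use the cofactor (Laplace) expansion of the determinant along, say, the $p$-th row: $\det(A) = \sum_{q=1}^n A_{pq}C_{pq}$, where $C_{pq}$ is the $(p,q)$ cofactor, and crucially $C_{pq}$ does not depend on any entry of row $p$. Hence $\partial\det(A)/\partial A_{pq} = C_{pq}$. Then invoke the classical adjugate formula $A^{-1} = \operatorname{adj}(A)/\det(A)$ together with $(\operatorname{adj}(A))_{qp} = C_{pq}$ to conclude $\partial\det(A)/\partial A_{pq} = \det(A)(A^{-1})_{qp}$. Alternatively, one can derive this from the first identity via Jacobi's formula $d\det(A) = \det(A)\operatorname{Tr}(A^{-1}\,dA)$, reading off the coefficient of $dA_{pq}$; I would likely present the cofactor argument as the primary one since it is self-contained.

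There is essentially no obstacle here — this is a routine computational lemma, and the only mild subtlety is making sure the index bookkeeping in the contraction step of the first part is done cleanly (keeping track of which index is free and which is summed). I would keep the write-up short, emphasising the differentiation of $A^{-1}A=I$ and one line of index manipulation for the first formula, and a one-sentence appeal to the cofactor expansion plus the adjugate identity for the second.
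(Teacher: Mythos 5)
Your proposal is correct and follows essentially the same route as the paper's proof: differentiating $A^{-1}A = I_n$ entrywise and contracting with $A^{-1}$ for the first identity, and using the cofactor expansion along the $p$-th row together with the adjugate/Cramer relation for the second. The only cosmetic difference is that the paper phrases the adjugate step via $C = \det(A)(A^{-1})^T$ rather than $\operatorname{adj}(A) = \det(A)A^{-1}$, but these are the same identity.
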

\begin{proof}
    For the first equation we take the partial derivative of the relation
    \begin{align*}
        \sum_{u=1}^n A^{-1}_{\alpha u}A_{uv}=\delta_{\alpha v}
    \end{align*}
    with respect to $A_{pq}$ to obtain
    \begin{align*}
        \sum_{u=1}^n\frac{\partial A^{-1}_{\alpha u}}{\partial A_{pq}} A_{uv } =-A^{-1}_{\alpha p}\delta_{qv} .
    \end{align*}
    Multiplying this equation by $A^{-1}_{v\beta }$ and summing over $1\leq v\leq n$ gives the first equation in the lemma.

    Note that by Cramer's rule the cofactor matrix $C :=\det (A)(A^{-1})^T$ of $A$ is equal to the minor matrix of $A$ up to sign.
    In particular, $C_{pu}$ is independent of $A_{pq}$.
    For the second equation we expand the determinant along the $p^{\textrm{th}}$-row, i.e.
    \begin{align*}
        \det (A)=\sum_{u=1}^n A_{p u} C_{pu}.
    \end{align*}
    Taking the derivative of the above expression with respect to $A_{pq}$ immediately gives the second equation in the lemma.
\end{proof}

\begin{theorem}\label{thm:Laplace-transform-area-functional-equal-size}
    Let $(\Lambda (t),\mathfrak{a} (t))_{t\geq 0}$ be the diffusion process from before.
    For any
    \begin{align*}
        u=(u_1,\dots ,u_{k+1})\in\mathbb{R}^{k+1},
    \end{align*}
    any matrices in the matrix simplex $\Lambda (0),\Lambda\in\mathcal{T}^m_{n}$ and any $t>0$, we have
    \begin{align*}
        \mathbb{E}\left( e^{i\sum_{j=1}^{k+1} u_j\mathfrak{a}_j(t)}\bigm| \Lambda (t)=\Lambda\right) =&e^{-m(n-m)\sum_{j=1}^{k+1}|u_j|t-\frac{m}{2}\sum_{1\leq j\neq\ell\leq k+1}(u_ju_{\ell} +|u_ju_{\ell} |)t}\\
        &\prod_{j=1}^{k+1}\left(\frac{\det (\Lambda_j(0))}{\det (\Lambda_j)}\right)^{\frac{|u_j|}{2}}\frac{\hat{q}_{2t}^{(m/2 +|u_1|,\dots ,m/2+|u_n|)} (\Lambda (0) ,d\Lambda )}{\hat{q}_{2t}^{(m/2,\dots ,m/2)} (\Lambda (0) ,d\Lambda )} ,
    \end{align*}
    where $\hat{q}_t^{(\kappa_1,\dots ,\kappa_{k+1})}$ is the density of a Jacobi operator on the simplex of Hermitian matrices of index $(\kappa_1 ,\dots ,\kappa_{k+1} )$, i.e. a diffusion process with generator
    \begin{align*}
        \mathcal{G}_{\kappa_1,\dots ,\kappa_{k+1}}^m :=& 2\sum_{j=1}^{k+1}\sum_{\alpha ,\beta ,\gamma ,\delta =1}^m (I_m-\Lambda_j)_{\alpha\delta}\Lambda_{j,\gamma\beta}\frac{\partial^2}{\partial\Lambda_{j,\gamma\delta}\partial\Lambda_{j,\alpha\beta}} \\ 
        &-2\sum_{1\leq j\neq\ell\leq k+1}\sum_{\alpha ,\beta ,\gamma ,\delta =1}^{m}(\Lambda_{j,\gamma\beta}\Lambda_{\ell ,\alpha\delta} +\Lambda_{j,\alpha\delta}\Lambda_{\ell ,\gamma\beta})\frac{\partial^2}{\partial\Lambda_{j,\alpha\beta}\partial\Lambda_{\ell ,\gamma\delta}} \\
        &+2\sum_{j=1}^{k+1}\sum_{\alpha ,\beta =1}^{m}\left(\left(\kappa_j +\frac{m}{2}\right) \delta_{\alpha\beta}-\left(|\kappa |+\frac{n}{2}\right)\Lambda_{j,\alpha\beta}\right)\frac{\partial}{\partial\Lambda_{j,\alpha\beta}} ,
    \end{align*}
    with respect to the Lebesgue measure on the entries of $\Lambda$.
\end{theorem}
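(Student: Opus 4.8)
The approach is to reduce the computation to a Feynman--Kac problem on the matrix simplex $\mathcal T^m_n$ and then resolve it by an $h$-transform; this is the block-matrix analogue of the argument for the full flag manifold in \cite{baudoin2025fullflag}. The starting point is Corollary \ref{cor:main-diffusion-m}, which gives that $(\Lambda(t),\mathfrak a(t))$ is a diffusion whose generator involves the area coordinates only through the constant-in-$\mathfrak a$ operators $\sum_j(\mathrm{Tr}(\Lambda_j^{-1})-m)\partial_{\mathfrak a_j}^2$ and $m\sum_{j\neq\ell}\partial_{\mathfrak a_j}\partial_{\mathfrak a_\ell}$. Fixing $u\in\mathbb R^{k+1}$ and a test function $\phi$, I would set $G(t,\Lambda_0):=\mathbb E_{\Lambda_0}[e^{i\langle u,\mathfrak a(t)\rangle}\phi(\Lambda(t))]$; because the initial value $\mathfrak a(0)$ only translates $\mathfrak a(t)$, the backward Kolmogorov equation for the pair, with the $\mathfrak a_0$-derivatives falling on $e^{i\langle u,\mathfrak a_0\rangle}$, reduces to
\begin{align*}
\partial_t G=\mathcal L G-V_u\,G,\qquad G(0,\cdot)=\phi ,
\end{align*}
where $\mathcal L$ is the generator of the $\Lambda$-marginal (the index-$(m/2,\dots,m/2)$ Jacobi operator on the simplex of Theorem \ref{thm:generator-process-of-interest}) and $V_u(\Lambda)=\sum_j(\mathrm{Tr}(\Lambda_j^{-1})-m)u_j^2+m\sum_{j\neq\ell}u_ju_\ell$. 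Equivalently $G(t,\Lambda_0)=\mathbb E_{\Lambda_0}[e^{-\int_0^t V_u(\Lambda(s))\,ds}\phi(\Lambda(t))]$, and letting $\phi\to\delta_\Lambda$ identifies the conditional characteristic function with $\psi^u_t(\Lambda_0,\Lambda)/\psi^0_t(\Lambda_0,\Lambda)$, where $\psi^u_t$ is the sub-density with respect to Lebesgue on the entries of $\Lambda$ of $\mathbb E_{\Lambda_0}[e^{i\langle u,\mathfrak a(t)\rangle};\Lambda(t)\in d\Lambda]$ and $\psi^0_t=\hat q^{(m/2,\dots,m/2)}_{2t}(\Lambda_0,\cdot)$ is the transition density of $(\Lambda(t))$.

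Next I would prove that $\psi^u_t(\Lambda_0,\Lambda)=e^{-A(u)t}\,h(\Lambda_0)\,h(\Lambda)^{-1}\,\hat q^{\kappa'}_{2t}(\Lambda_0,\Lambda)$ with $h(\Lambda):=\prod_{j}\det(\Lambda_j)^{|u_j|/2}$, $\kappa':=(m/2+|u_1|,\dots,m/2+|u_{k+1}|)$, and $A(u)=m(n-m)\sum_j|u_j|+\tfrac m2\sum_{j\neq\ell}(u_ju_\ell+|u_ju_\ell|)$; dividing by $\psi^0_t$ then yields the stated formula, and $h(\Lambda_0)h(\Lambda)^{-1}=1$ at $\Lambda=\Lambda_0$ makes the initial data agree. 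By uniqueness for the parabolic Cauchy problem it suffices to verify the operator identity
\begin{align*}
\tfrac1h\,\mathcal L(h\,g)=c\,\mathcal G^m_{\kappa'}\,g+\big(V_u-A(u)\big)\,g
\end{align*}
for smooth $g$, where $c$ is the constant relating $\mathcal L$ to the generator $\mathcal G^m_{\kappa'}$ of $\hat q^{\kappa'}$ and absorbing the time change $t\mapsto 2t$; applying it with $g=\hat q^{\kappa'}_{2t}(\Lambda_0,\cdot)$ produces the equation. Since the Jacobi generators of indices $(m/2,\dots,m/2)$ and $\kappa'$ differ only in their first-order part, this splits into a first-order identity --- conjugation by $h$ must turn the drift of index $(m/2,\dots,m/2)$ into that of index $\kappa'$ --- and the zeroth-order identity $\mathcal L h/h-V_u=-A(u)$.

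The zeroth-order identity is the heart of the matter and the main obstacle, being the block-matrix version of the scalar computation in \cite{baudoin2025fullflag}. Using Lemma \ref{lemma:derivatives-matrices}, namely $\partial_{\Lambda_{j,pq}}\det(\Lambda_j)^a=a\det(\Lambda_j)^a(\Lambda_j^{-1})_{qp}$ and the induced formula $\partial_{\Lambda_{j,rs}}\partial_{\Lambda_{j,pq}}\det(\Lambda_j)^a=a\det(\Lambda_j)^a\big(a(\Lambda_j^{-1})_{qp}(\Lambda_j^{-1})_{sr}-(\Lambda_j^{-1})_{qr}(\Lambda_j^{-1})_{sp}\big)$, one computes that the diagonal block $\sum(I_m-\Lambda_j)_{\alpha\delta}\Lambda_{j,\gamma\beta}\partial^2_{\Lambda_{j,\alpha\beta},\Lambda_{j,\gamma\delta}}$ together with the index-$(m/2)$ drift, applied to $\det(\Lambda_j)^a$, produces a $\mathrm{Tr}(\Lambda_j^{-1})$-term with coefficient proportional to $a^2$ plus a constant linear in $a$, whereas the off-diagonal blocks $\Lambda_{j,\gamma\beta}\Lambda_{\ell,\alpha\delta}\partial^2_{\Lambda_j,\Lambda_\ell}$ applied to $\det(\Lambda_j)^{a_j}\det(\Lambda_\ell)^{a_\ell}$ contract against $\Lambda_j^{-1}$ and $\Lambda_\ell^{-1}$ to a pure constant proportional to $m\,a_ja_\ell$. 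Cancelling the singular part $\mathrm{Tr}(\Lambda_j^{-1})u_j^2$ of $V_u$ therefore forces $a=\pm|u_j|/2$, and admissibility $m/2+|u_j|>m/2-1$ of the shifted index selects $a=|u_j|/2$: this is precisely why $|u_j|$, and from the cross blocks $|u_ju_\ell|$, occur in the answer rather than $u_j$, $u_ju_\ell$. The remainder is a direct but lengthy bookkeeping: the surviving first-order terms must reassemble the drift of index $\kappa'$, and the residual constants must collect into $A(u)$, the diagonal blocks (together with the $-mu_j^2$ in $V_u$) contributing $m(n-m)\sum_j|u_j|$ and the cross blocks (together with $m\sum_{j\neq\ell}u_ju_\ell$) contributing $\tfrac m2\sum_{j\neq\ell}(u_ju_\ell+|u_ju_\ell|)$. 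Finally one should check that $h$ and $\hat q^{\kappa'}_{2t}$ are sufficiently regular and integrable near $\partial\mathcal T^m_n$ --- note $V_u$ is bounded below up to a bounded perturbation, so $e^{-\int V_u}$ is integrable --- so as to justify the Feynman--Kac representation and the uniqueness step, after which the two sides of the identity coincide.
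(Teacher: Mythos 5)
Your proposal is, at bottom, the same argument as the paper's: condition on the radial filtration to get a Gaussian for $\mathfrak a$, reduce to a Feynman--Kac-type expectation of $e^{-\int V_u\,ds}$ under the $\Lambda$-dynamics, and then eliminate the singular part of $V_u$ by a Doob $h$-transform with $h(\Lambda)=\prod_j\det(\Lambda_j)^{|u_j|/2}$, whose effect is to shift the index of the Jacobi operator from $(m/2,\dots,m/2)$ to $(m/2+|u_1|,\dots,m/2+|u_{k+1}|)$. The difference is purely one of implementation of the $h$-transform: you verify an operator conjugation identity and invoke uniqueness for a parabolic Cauchy problem, whereas the paper constructs the exponential local martingale $D^u_t$, bounds it to get a true martingale, changes measure via Girsanov, and reads off the new SDE satisfied by $\Lambda$ under $\mathbb P^u$ to identify the shifted generator. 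These are two presentations of the same idea; your zeroth-order identity $\mathcal L h/h - V_u = -A(u)$ is exactly the paper's eigenfunction computation, and your first-order conjugation identity is exactly the paper's Girsanov drift computation. The probabilistic route has the small advantage that the martingale bound $D^u_t\le e^{A(u)t}\prod_j\det(\Lambda_j(0))^{-|u_j|/2}$ replaces what would otherwise be a delicate uniqueness argument near $\partial\mathcal T^m_n$, where the transition density $\hat q^{\kappa'}_{2t}$ and the factor $h^{-1}$ are not obviously well-behaved; you flag this issue but do not resolve it, so if you pursue the PDE route you would still owe an argument that the candidate $e^{-A(u)t}h(\Lambda_0)h(\Lambda)^{-1}\hat q^{\kappa'}_{2t}$ is the unique solution in the relevant class.
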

\begin{proof}
    From corollary \ref{cor:main-diffusion-m}, we know that conditioned on $(\Lambda (t))_{0\leq s\leq t}$, the area process $(\mathfrak{a} (t))_{t\geq 0}$ is a Gaussian random variable with mean zero and covariance matrix
    \begin{align*}
        \Sigma (t) :=\begin{pmatrix}
            \int_0^t(\mathrm{Tr}(\Lambda_1^{-1}(s)) -m)ds & \dots & mt\\
            \vdots & \ddots & \vdots \\
            mt & \dots & \int_0^t(\mathrm{Tr}((\Lambda_{k+1}^{-1}(s)) -m)ds
        \end{pmatrix} .
    \end{align*}
    Therefore
    \begin{align*}
        \mathbb{E}\bigg( &e^{i\sum_{j=1}^{k+1} u_j\mathfrak{a}_j(t)} \mid \Lambda (t)=\Lambda\bigg)
        =\mathbb{E}\left( e^{-\frac{1}{2}u^T\Sigma (t) u}\mid\Lambda (t) =\Lambda\right) \\
        &\qquad =\mathbb{E}\left(\exp\left( -\sum_{j=1}^{k+1} u_j^2\int_0^t(\mathrm{Tr}(\Lambda_j^{-1}(s)) -m)ds -\frac{m}{2}\sum_{1\leq j\neq\ell\leq k+1}u_ju_{\ell}t\right)\biggm|\Lambda (t) =\Lambda\right) .
    \end{align*}
    Define the function $f:\mathcal{T}_n^m\rightarrow\mathbb{R}$ by
    \begin{align*}
        f(\Lambda_1,\dots ,\Lambda_{k+1}) :=\prod_{j=1}^{k+1}(\det (\Lambda_j ))^{\frac{|u_j|}{2}} 
    \end{align*}
    and let $L$ be the generator of $(\Lambda_t)_{t\geq 0}$.
    Using lemma \ref{lemma:derivatives-matrices} we calculate
    \begin{align*}
        Lf =&2\sum_{j=1}^{k+1}\sum_{\alpha ,\beta ,\gamma ,\delta =1}^m (I_m-\Lambda_j)_{\alpha\delta}\Lambda_{\gamma\beta}\frac{\partial^2 f}{\partial\Lambda_{j,\gamma\delta}\partial\Lambda_{j,\alpha\beta}} +2\sum_{j=1}^{k+1}\sum_{\alpha ,\beta =1}^m(mI_m -n\Lambda_j)_{\alpha\beta}\frac{\partial f}{\partial\Lambda_{j,\alpha\beta}} \\
        &-2\sum_{1\leq j\neq\ell\leq k+1}\sum_{\alpha ,\beta,\gamma ,\delta =1}^m(\Lambda_{j,\gamma\beta}\Lambda_{\ell ,\alpha\delta} +\Lambda_{j,\alpha\delta}\Lambda_{\ell ,\gamma\beta})\frac{\partial^2 f}{\partial\Lambda_{j,\alpha\beta}\partial\Lambda_{\ell ,\gamma\delta}} \\
        =&2\sum_{j=1}^{k+1}\sum_{\alpha ,\beta ,\gamma ,\delta =1}^m(I_m-\Lambda_j)_{\alpha\delta}\Lambda_{j,\gamma\beta}\left(\frac{|u_j|}{2}(\Lambda^{-1}_{j})_{\beta\gamma}(\Lambda_j^{-1})_{\delta\alpha} +\frac{|u_j|^2}{4}(\Lambda_j^{-1})_{\beta\alpha}(\Lambda_j^{-1})_{\delta\gamma}\right) f\\
        &-|u_j|\sum_{j=1}^{k+1}\sum_{\alpha ,\beta =1}^m(mI_m -n\Lambda_j)_{\alpha\beta}(\Lambda_j^{-1})_{\beta\alpha} f\\
        &-\sum_{1\leq j\neq\ell\leq k+1}\sum_{\alpha ,\beta ,\gamma ,\delta =1}^m\frac{|u_j u_{\ell }|}{4}(\Lambda_{j,\gamma\beta}\Lambda_{\ell ,\alpha\delta} +\Lambda_{j,\alpha\delta}\Lambda_{\ell ,\gamma\beta})(\Lambda_j^{-1})_{\beta\alpha}(\Lambda_{\ell}^{-1})_{\delta\gamma} f\\
        =&m\sum_{j=1}^{k+1}|u_j|(\mathrm{Tr} (\Lambda_j^{-1}) -m)f +\sum_{j=1}^{k+1}\frac{|u_j|^2}{2}(\mathrm{Tr}(\Lambda_j^{-1})-m)f
        -\sum_{j=1}^{k+1}|u_j|(m\mathrm{Tr}(\Lambda_j^{-1}) -nm) \\
        &-\frac{m}{2}\sum_{1\leq j\neq\ell\leq k+1}|u_{j}u_{\ell}| f\\
        =&-m(n-m)|u| +\sum_{j=1}^{k+1}\frac{|u_j|^2}{2}(\mathrm{Tr}(\Lambda_j^{-1}) -m)f -\frac{m}{2}\sum_{1\leq j\neq\ell\leq k+1 }|u_{j}u_{\ell }|f .
    \end{align*}
    From which we see that $f$ is an eigenfunction of $L-\sum_{j=1}^{k+1}\frac{|u_j|^2}{2} (\mathrm{Tr}(\Lambda_j^{-1}) -m)f$ with eigenvalue
    \begin{align*}
        -m(n-m)|u|-\frac{m}{2}\sum_{1\leq j\neq\ell\leq k+1}|u_{\ell} u_j| .
    \end{align*}
    Itô's formula now shows that the process
    \begin{align*}
        D_t^u :=e^{m(n-m)\sum_{j=1}^{k+1}|u_j|t +\frac{m}{2}\sum_{1\leq j\neq\ell\leq k+1}|u_{\ell} u_j|t}\left(\prod_{j=1}^{k+1}\left(\frac{\det (\Lambda_j)}{\det (\Lambda_j(0))}\right)^{\frac{|u_j|}{2}} e^{-\frac{u_j^2}{2}\int_0^t (\mathrm{Tr} (\Lambda_j^{-1})-m )ds}\right)
    \end{align*}
    is a local martingale.
    The bound $D_t^u\leq e^{m(n-m)\sum_{j=1}^{k+1}|u_j|t +\frac{m}{2}\sum_{j\neq\ell}|u_{\ell} u_j|t}\prod_{j=1}^{k+1}\det (\Lambda_j(0))^{-|u_j|/2}$ shows that it is in fact a martingale.
    Define a new probability measure $\mathbb{P}^u$ by $d\mathbb{P}^{u} :=D_t^ud\mathbb{P}$.
    We have for every bounded Borel function $F$ that
    \begin{align}
    \begin{split}\label{eq:intermidiate-formula-characteristic-function}
        \mathbb{E} &\left(F(\Lambda_1(t),\dots ,\Lambda_{k+1}(t))e^{-\frac{1}{2}\sum_{j=1}^{k+1} u_j^2\int_0^t(\mathrm{Tr}(\Lambda_j(s))^{-1})-m) ds}\right) \\
        &=e^{-m(n-m)\sum_{j=1}^{k+1}|u_j|t -\frac{m}{2}\sum_{1\leq j\neq\ell\leq k+1} |u_ju_{\ell}|t}\prod_{j=1}^{k+1}(\det (\Lambda_j(0)))^{\frac{|u_j|}{2}}\mathbb{E}^u\left(\frac{F(\Lambda_1(t) ,\dots ,\Lambda_{k+1}(t))}{\prod_{j=1}^{k+1}(\det (\Lambda_j(t)))^{\frac{|u_j|}{2}}}\right) .
    \end{split}
    \end{align}
    Let $s_t^{(u)}(\Lambda (0),d\Lambda )$ denote the probability distribution of $\Lambda_t$ under $\mathbb{P}^u$ (where we use the convention $\mathbb{P}^0 =\mathbb{P}$).
    The above equality then implies
    \begin{align*}
        \mathbb{E}\bigg( &e^{-\frac{1}{2}\sum_{j=1}^{k+1}u_j^2\int_0^t(\mathrm{Tr}(\Lambda_j^{-1}(s)) -m)ds}\Bigm| \Lambda (t) =\Lambda\bigg) s_t^{(0)}(\Lambda (0) ,d\Lambda )\\
        &\qquad =e^{-m(n-m)\sum_{j=1}^{k+1}|u_j|t -\frac{m}{2}\sum_{1\leq j\neq\ell\leq k+1} |u_ju_{\ell} |t}\prod_{j=1}^{k+1}\left(\frac{\det (\Lambda_j(0))}{\det (\Lambda_j)}\right)^{\frac{|u_j|}{2}} s_t^{(u)}(\Lambda (0),d\Lambda ).
    \end{align*}
    We will now calculate the stochastic differential equation satisfied by $(\Lambda (t))_{t\geq 0}$ under $\mathbb{P}^u$.
    By Itô's formula, theorem \ref{thm:generator-process-of-interest} and lemma \ref{lemma:derivatives-matrices} we have
    \begin{align*}
        d\ln (\det (\Lambda_j)) =&\frac{d\det (\Lambda_j)}{\det (\Lambda_j)} -\frac{1}{2}\frac{d\det (\Lambda_j) d\det (\Lambda_j)}{\det (\Lambda_j)^2} 
        =\mathrm{Tr} (\Lambda_j^{-1}d\Lambda_j) -\frac{1}{2}\mathrm{Tr}(\Lambda_j^{-1}d\Lambda_j)\mathrm{Tr}(\Lambda_j^{-1}d\Lambda_j)\\
        &+\frac{1}{2}\sum_{p,q,p',q'=1}^m\left( \Lambda_{j,qp}^{-1}\Lambda_{j,q'p'}^{-1} -\Lambda_{j,q'p}^{-1}\Lambda_{j,qp'}^{-1}\right) d\Lambda_{j,pq}d\Lambda_{j,p'q'}\\
        =&\sum_{\ell =1;\ell\neq j}^{k+1}\mathrm{Tr}(\Lambda_j^{-\frac{1}{2}}d\gamma^*_{\ell j}\Lambda_{\ell}^{\frac{1}{2}})
        +\sum_{\ell =1;\ell\neq j}^{k+1}\mathrm{Tr}(\Lambda_{\ell}^{\frac{1}{2}}d\gamma_{\ell j}\Lambda_{j}^{-\frac{1}{2}})
        +2(m\mathrm{Tr}(\Lambda_j^{-1} )-mn) dt\\
        &-\sum_{\substack{\ell ,u=1\\ \ell ,u\neq j}}^{k+1}\left( -2\delta_{\ell j}\delta_{ju}\mathrm{Tr}\left(\Lambda_j^{-\frac{1}{2}}\Lambda_{u}^{\frac{1}{2}}\right)\mathrm{Tr}\left(\Lambda_{\ell}^{\frac{1}{2}}\Lambda_j^{-\frac{1}{2}}\right) +2\delta_{u\ell }\mathrm{Tr}\left( \Lambda_j^{-1}\right)\mathrm{Tr}\left(\Lambda_{\ell}^{\frac{1}{2}}\Lambda_u^{\frac{1}{2}}\right)\right) dt\\
        =&\sum_{\ell =1;\ell\neq j}^{k+1}\mathrm{Tr}(\Lambda_j^{\frac{1}{2}}d\gamma^*_{\ell j}\Lambda_{\ell}^{-\frac{1}{2}}) +
        \sum_{\ell =1;\ell\neq j}^{k+1}\mathrm{Tr}(\Lambda_{\ell}^{-\frac{1}{2}}d\gamma_{\ell j}\Lambda_{j}^{\frac{1}{2}})
        -2m(n-m) dt
    \end{align*}
    for some Brownian motion $(\gamma (t))_{t\geq 0}$ on $\mathfrak{u}(n)$.
    This gives
    \begin{align*}
        \prod_{j=1}^{k+1}(\det (\Lambda_j))^{\frac{|u_j|}{2}} 
        =&\exp\left(\sum_{1\leq\ell\neq j\leq k+1}\frac{|u_j|}{2}\int_0^t\mathrm{Tr}(\Lambda_j^{-\frac{1}{2}}d\gamma^*_{\ell j}\Lambda_{\ell}^{\frac{1}{2}})\right) \\
        &\exp\left(\sum_{1\leq\ell\neq j\leq k+1}\frac{|u_j|}{2}\int_0^t\mathrm{Tr}(\Lambda_{\ell}^{\frac{1}{2}}d\gamma_{\ell j}\Lambda_{j}^{-\frac{1}{2}})\right)
        \exp\left( -m(n-m)\sum_{j=1}^{k+1}|u_j|\right) .
    \end{align*}
    We can write the above using the inner product $\langle A,B\rangle :=\frac{1}{2}\mathrm{Tr} (AB^*)$ on the space of matrices.
    Define
    \begin{align*}
        \Theta_{\ell j}(t) :=|u_j|\Lambda_{\ell}^{\frac{1}{2}}(t)\Lambda_j^{-\frac{1}{2}}(t) -|u_{\ell}|\Lambda_{\ell}^{-\frac{1}{2}}(t)\Lambda_j^{\frac{1}{2}}(t)
    \end{align*}
    for $1\leq\ell <j\leq k+1$.
    By Girsanov's theorem the stochastic processes
    \begin{align*}
        d\tilde{\gamma}_{\ell j}(t) :=d\gamma_{\ell j}(t) -\Theta_{\ell j}(t) dt
    \end{align*}
    for $1\leq\ell <j\leq k+1$, are independent Brownian motions on $\mathbb{C}^{m\times m}$.
    Define furthermore $d\gamma_{j\ell}(t) :=-d\gamma_{\ell j}^*(t)$ for $1\leq\ell <j\leq k+1$.
    Under $\mathbb{P}^u$, the process $(\Lambda (t))_{t\geq 0}$ satisfies the stochastic differential equation
    \begin{align*}
        d\Lambda_j =& \Lambda_j^{\frac{1}{2}}\sum_{1\leq\ell\neq j\leq k+1}d\tilde{\gamma}^*_{\ell j}\Lambda_{\ell}^{\frac{1}{2}}
        +\sum_{1\leq\ell\neq j\leq k+1}\Lambda_{\ell}^{\frac{1}{2}}d\tilde{\gamma}_{\ell j}\Lambda_j^{\frac{1}{2}} -2n\Lambda_jdt +2mI_mdt\\
        &+2\sum_{1\leq\ell <j}(|u_j|\Lambda_{\ell} -|u_{\ell}|\Lambda_j) dt
        -2\sum_{j<\ell\leq k+1} (|u_{\ell}|\Lambda_j -|u_j|\Lambda_{\ell}) dt\\
        =&\Lambda_j^{\frac{1}{2}}\sum_{1\leq\ell\neq j\leq k+1}d\tilde{\gamma}^*_{\ell j}\Lambda_{\ell}^{\frac{1}{2}}
        +\sum_{1\leq\ell\neq j\leq k+1}\Lambda_{\ell}^{\frac{1}{2}}d\tilde{\gamma}_{\ell j}\Lambda_j^{\frac{1}{2}} +2(mI_m-n\Lambda_j -|u|\Lambda_j +|u_j|)dt ,
    \end{align*}
    where we used that $\sum_{\ell =1}^{k+1}\Lambda_j(t) =I_m$ and the notation $|u|:=\sum_{i=1}^{k+1}|u_{i}|$.
    In particular, the generator of $(\Lambda (t))_{t\geq 0}$ under $\mathbb{P}^u$ is given by twice a Jacobi operator on the simplex of Hermitian matrices:
    \begin{align*}
        2\sum_{j=1}^{k+1}\sum_{\alpha ,\beta ,\gamma ,\delta =1}^m (I_m-\Lambda_j)_{\alpha\delta}\Lambda_{j,\gamma\beta}&\frac{\partial^2}{\partial\Lambda_{j,\gamma\delta}\partial\Lambda_{j,\alpha\beta}} 
        +2\sum_{j=1}^{k+1}\sum_{\alpha ,\beta =1}^m ((m+|u_j|)I_m-(n+|u|)\Lambda_j)_{\alpha\beta}\frac{\partial}{\partial\Lambda_{j,\alpha\beta}}\\
        -&\sum_{1\leq j\neq\ell\leq k+1}\sum_{\alpha ,\beta ,\gamma ,\delta =1}^m(\Lambda_{j,\gamma\beta}\Lambda_{\ell ,\alpha\delta} +\Lambda_{j,\alpha\delta}\Lambda_{\ell ,\gamma\beta})\frac{\partial^2}{\partial\Lambda_{j,\alpha\beta}\partial\Lambda_{\ell ,\gamma\delta}}.
    \end{align*}
\end{proof}

\subsection{Limit theorem}
The previous theorem allows us to determine the large time asymptotics of the stochastic area processes.

\begin{corollary}\label{cor:limit-area-process-on-Fm}
    Let $(\mathfrak{a}(t))_{t\geq 0}$ be the stochastic area process as given in equation \eqref{eq:stochastic-area-process}.
    The following convergence holds in distribution
    \begin{align*}
        \frac{\mathfrak{a}(t)}{t}\rightarrow\left( C_{m(n-m)}^1,\dots ,C_{m(n-m)}^{k+1}\right)\textrm{ as } t\rightarrow\infty ,
    \end{align*}
    where $C^1_{m(n-m)},\dots ,C^{k+1}_{m(n-m)}$ are independent Cauchy random variables with parameter $m(n-m)$.
\end{corollary}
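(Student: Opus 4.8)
The plan is to compute, for every fixed $u=(u_1,\dots ,u_{k+1})\in\mathbb{R}^{k+1}$, the limit as $t\to\infty$ of the characteristic function $\mathbb{E}\big[e^{i\langle u,\mathfrak{a}(t)/t\rangle}\big]$, to identify it with the characteristic function of a vector of independent Cauchy variables, and then to invoke L\'evy's continuity theorem. Two facts established inside the proof of theorem \ref{thm:Laplace-transform-area-functional-equal-size} carry all the weight. First, conditionally on $(\Lambda (s))_{0\le s\le t}$ the vector $\mathfrak{a}(t)$ is centred Gaussian with the covariance matrix $\Sigma (t)$ displayed there, so that for every $v\in\mathbb{R}^{k+1}$
\begin{align*}
    \mathbb{E}\big[e^{i\langle v,\mathfrak{a}(t)\rangle}\big]=e^{-\frac{mt}{2}\sum_{1\le j\neq\ell\le k+1}v_jv_{\ell}}\,\mathbb{E}\Big[\prod_{j=1}^{k+1}e^{-\frac{v_j^2}{2}\int_0^t(\mathrm{Tr}(\Lambda_j^{-1}(s))-m)\,ds}\Big] .
\end{align*}
Second, the exponential local martingale $D^v$ built there is in fact a true martingale with $\mathbb{E}[D_t^v]=1$ for every fixed $v$ and every fixed horizon $t$, which upon rearranging and evaluating at the terminal time reads
\begin{align*}
    \mathbb{E}\Big[\prod_{j=1}^{k+1}\Big(\frac{\det (\Lambda_j(t))}{\det (\Lambda_j(0))}\Big)^{\!\frac{|v_j|}{2}}e^{-\frac{v_j^2}{2}\int_0^t(\mathrm{Tr}(\Lambda_j^{-1}(s))-m)\,ds}\Big]=e^{-m(n-m)t\sum_{j=1}^{k+1}|v_j|-\frac{mt}{2}\sum_{1\le j\neq\ell\le k+1}|v_jv_{\ell}|} .
\end{align*}

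Next I would set $v=u/t$ in both identities. In the second one, writing $c_t:=\prod_{j=1}^{k+1}\det (\Lambda_j(0))^{-|u_j|/(2t)}$ (a deterministic quantity tending to $1$, since each $\det (\Lambda_j(0))>0$ as the Brownian motion starts in $\mathcal{D}_m$, which it never leaves, so $\Lambda_j=Z_jZ_j^*$ is invertible), the right-hand side equals $c_t^{-1}e^{-m(n-m)\sum_j|u_j|}e^{-\frac{m}{2t}\sum_{j\neq\ell}|u_ju_{\ell}|}\to e^{-m(n-m)\sum_j|u_j|}$. Put $Q_t:=\prod_{j=1}^{k+1}e^{-\frac{u_j^2}{2t^2}\int_0^t(\mathrm{Tr}(\Lambda_j^{-1})-m)\,ds}$ and $R_t:=\prod_{j=1}^{k+1}\det (\Lambda_j(t))^{|u_j|/(2t)}$; since $0\le\Lambda_j(t)\le I_m$ we have $\det(\Lambda_j(t))\le 1$ and $\mathrm{Tr}(\Lambda_j^{-1}(t))\ge m$, so $Q_t,R_t\in (0,1]$. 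The two identities then become
\begin{align*}
    \mathbb{E}\big[e^{i\langle u,\mathfrak{a}(t)/t\rangle}\big]=e^{-\frac{m}{2t}\sum_{j\neq\ell}u_ju_{\ell}}\,\mathbb{E}[Q_t]\qquad\text{and}\qquad \mathbb{E}[R_tQ_t]\xrightarrow[t\to\infty]{}\prod_{j=1}^{k+1}e^{-m(n-m)|u_j|} .
\end{align*}
Because $0\le Q_t\le 1$ and $0\le 1-R_t\le 1$, one has $0\le\mathbb{E}[Q_t]-\mathbb{E}[R_tQ_t]\le 1-\mathbb{E}[R_t]$, so the whole argument is reduced to proving $\mathbb{E}[R_t]\to 1$, equivalently $\tfrac1t\log\det (\Lambda_j(t))\to 0$ in probability for each $j$.

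To establish this I would show that the family $\{\log\det (\Lambda_j(t))\}_{t\ge 0}$ is tight. By remark \ref{rmk:marginal-ditribution} each marginal $(\Lambda_j(t))_{t\ge 0}$ is a Hermitian Jacobi process, a non-degenerate diffusion on the compact set $\{0\le\Lambda_j\le I_m\}$ whose drift $m I_m-n\Lambda_j$ points strictly inward along the face $\{\det (\Lambda_j)=0\}$. Applying the generator of theorem \ref{thm:generator-process-of-interest} to $\Lambda\mapsto (\det (\Lambda_j))^{-\delta}$ with the help of lemma \ref{lemma:derivatives-matrices}, one checks for all sufficiently small $\delta >0$ a Foster--Lyapunov inequality (the first-order drift term dominates the second-order term near $\{\det(\Lambda_j)=0\}$ with the right sign), which combined with $\det (\Lambda_j(0))>0$ yields $\sup_{t\ge 0}\mathbb{E}\big[(\det (\Lambda_j(t)))^{-\delta}\big]<\infty$; by Markov's inequality $\sup_{t\ge 0}\mathbb{P}\big(\det (\Lambda_j(t))<\varepsilon\big)\to 0$ as $\varepsilon\to 0$, which is the asserted tightness. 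It follows that $\tfrac1t\log\det (\Lambda_j(t))\to 0$ in probability, hence $\mathbb{E}[R_t]\to 1$ by bounded convergence, and therefore $\mathbb{E}\big[e^{i\langle u,\mathfrak{a}(t)/t\rangle}\big]\to\prod_{j=1}^{k+1}e^{-m(n-m)|u_j|}$. The right-hand side is exactly the characteristic function of $\big(C_{m(n-m)}^1,\dots ,C_{m(n-m)}^{k+1}\big)$ with independent entries, so L\'evy's continuity theorem gives the claimed convergence in distribution.

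The step I expect to be the real obstacle is the uniform-in-time negative-moment bound $\sup_{t\ge 0}\mathbb{E}\big[(\det (\Lambda_j(t)))^{-\delta}\big]<\infty$, i.e.\ the uniform control of the rare events on which some $\Lambda_j(t)$ is nearly singular: convergence to equilibrium of the Jacobi process on $\mathcal{T}_n^m$ handles large $t$ and the starting point handles small $t$, but merging these into a bound uniform in $t$ requires either the Foster--Lyapunov estimate above or quantitative heat-kernel bounds near the boundary of $\mathcal{T}_n^m$, in the spirit of the full-flag analysis in \cite{baudoin2025fullflag}. As a consistency check on the mechanism, the two displayed identities show that $\tfrac1{t^2}\int_0^t(\mathrm{Tr}(\Lambda_j^{-1}(s))-m)\,ds$ converges in law to a one-sided $\tfrac12$-stable variable, and a centred Gaussian whose variance is mixed by such a variable is Cauchy-distributed, which is the probabilistic reason the parameter-$m(n-m)$ Cauchy law appears.
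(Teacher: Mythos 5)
Your reduction is clean and, up to reformulation, coincides with the paper's: you feed the conditional Gaussian characteristic function (from corollary \ref{cor:main-diffusion-m}) and the exponential martingale $D^u$ (from theorem \ref{thm:Laplace-transform-area-functional-equal-size}) into one another with $v=u/t$, peel off the deterministic factors, and observe that everything hinges on $\mathbb{E}[R_t]\to 1$, i.e.\ on $\tfrac1t\log\det(\Lambda_j(t))\to 0$ in probability. The inequality $0\le\mathbb{E}[Q_t]-\mathbb{E}[R_tQ_t]\le 1-\mathbb{E}[R_t]$ is a nice way to isolate that step, and is arguably more transparent than the paper's manipulation of the density $\hat q_t^{(\kappa)}$.

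Where you diverge is the final step, and there your proposed route is both heavier than needed and, as stated, unlikely to go through. You do not need a \emph{uniform-in-$t$} negative-moment bound: the conclusion only requires $\limsup_{t\to\infty}\mathbb{P}(\det\Lambda_j(t)<\varepsilon)$ to be small, and this is exactly what the paper gets for free from the cited convergence of the marginal Hermitian Jacobi process to a complex matrix-variate Beta distribution $B_j$ (\cite{demni2020hermitian}): $\limsup_t\mathbb{P}(\det\Lambda_j(t)\le\varepsilon)\le\mathbb{P}(\det B_j\le\varepsilon)\to 0$ as $\varepsilon\to 0$, while on $\{\det\Lambda_j(t)\ge\varepsilon\}$ one has $R_t\ge\prod_j\varepsilon^{|u_j|/(2t)}\to 1$ uniformly. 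That already yields $\mathbb{E}[R_t]\to 1$ with no Foster--Lyapunov estimate. Moreover, for the specific index at hand the Foster--Lyapunov computation you sketch does not obviously close: by remark \ref{rmk:marginal-ditribution} the marginal generator has drift $2(mI_m-n\Lambda_j)$, which is exactly the borderline left parameter $m$. Already for $m=1$ one finds $\mathcal{L}\big(X^{-\delta}\big)=2\delta^2X^{-\delta-1}+2\delta(n-\delta-1)X^{-\delta}$, which is \emph{nonnegative} near $X=0$ — the opposite sign from what a Foster--Lyapunov inequality $\mathcal{L}f\le -cf+C$ requires. So the claim ``the first-order drift term dominates the second-order term near $\{\det(\Lambda_j)=0\}$ with the right sign'' is false at this index, and the bound $\sup_t\mathbb{E}[\det(\Lambda_j(t))^{-\delta}]<\infty$ cannot be obtained this way. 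Replace that part of the argument with the $\limsup$-plus-$\varepsilon\to 0$ argument above (which is what the paper does) and your proof is complete.
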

\begin{proof}
    Let $u_1,\dots ,u_{k+1}\in\mathbb{R}$.
    From theorem \ref{thm:Laplace-transform-area-functional-equal-size} we see that
    \begin{align*}
        \mathbb{E}\bigg( e^{i\sum_{j=1}^{k+1} u_j\frac{\mathfrak{a}_j (t)}{t}}\bigg)
        =&e^{-m(n-m)\sum_{j=1}^{k+1}|u_j| -\frac{m}{2t}\sum_{1\leq j\neq\ell\leq k+1}(u_ju_{\ell} +|u_ju_{\ell }|)}\\
        &\qquad\int_{\mathcal{T}_{k+1}^m}\prod_{j=1}^{k+1}\left(\frac{\det (\Lambda_j (0))}{\det (\Lambda_j)}\right)^{\frac{|u_j|}{2t}}\hat{q}_{2t}^{(m/2+|u_1|/t,\dots ,m/2+|u_{k+1}|/t)}(\Lambda (0),\Lambda) d\Lambda
    \end{align*}
    for $t>0$.
    
    We will now show that
    \begin{align*}
        \int_{\mathcal{T}_{k+1}^m} \frac{\hat{q}_{2t}^{(m/2+|u_1|/t,\dots ,m/2+|u_{k+1}|/t)}(\Lambda (0),\Lambda)}{\prod_{j=1}^{k+1}\det (\Lambda_j)^{|u_j|/(2t)}} d\Lambda\rightarrow 1\textrm{ as } t\rightarrow\infty ,
    \end{align*}
    which will directly show that the above characteristic function converges to the characteristic function of indpedendent Cauchy distributions of parameter $m(n-m)$.
    Let $\varepsilon >0$ and define
    \begin{align*}
        \mathcal{T}_{k+1}^{m,\varepsilon} =\scalemath{0.98}{\left\{ (\Lambda_1,\dots ,\Lambda_{k+1})\in M_{m\times m}(\mathbb{C})^{k+1}\Bigm| \sum_{j=1}^{k+1}\Lambda_j =I_m, \Lambda_j\geq\varepsilon I_m ,\Lambda_j^*=\Lambda_j\textrm{ for } 1\leq j\leq k+1\right\}} ,
    \end{align*}
    then
    \begin{align*}
        \Bigg|\int_{\mathcal{T}_{k+1}^m} &\frac{\hat{q}_{2t}^{(m/2+|u_1|/t,\dots ,m/2+|u_{k+1}|/t)}(\Lambda (0),\Lambda)}{\prod_{j=1}^{k+1}\det (\Lambda_j)^{|u_j|/(2t)}} d\Lambda -1\Bigg| \\
        &\leq\int_{\mathcal{T}_{k+1}^m}\left|\prod_{j=1}^{k+1}\det (\Lambda_j)^{-\frac{|u_j|}{t}} -1\right|\hat{q}_{2t}^{(m/2+|u_1|/t,\dots ,m/2+|u_{k+1}|/t)} (\Lambda (0),\Lambda)d\Lambda \\
        &\leq\int_{\mathcal{T}_{k+1}^{m,\varepsilon}} \left|\prod_{j=1}^{k+1}\det (\Lambda_j)^{-\frac{|u_j|}{t}} -1\right| \hat{q}_{2t}^{(m/2+|u_1|/t,\dots ,m/2+|u_{k+1}|/t)} (\Lambda (0),\Lambda)d\Lambda \\
        &\quad+2\int_{(\mathcal{T}_{k+1}^{m,\varepsilon})^c}  \frac{\hat{q}_{2t}^{(m/2+|u_1|/t,\dots ,m/2+|u_{k+1}|/t)}(\Lambda (0),\Lambda)}{\prod_{j=1}^{k+1}\det (\Lambda_j)^{|u_j|/(2t)}} d\Lambda .
    \end{align*}
    Note that
    \begin{align*}
        \int_{\mathcal{T}_{k+1}^{m,\varepsilon}} \left|\prod_{j=1}^{k+1}\det (\Lambda_j)^{-\frac{|u_j|}{t}} -1\right| \hat{q}_{2t}^{(m/2+|u_1|/t,\dots ,m/2+|u_{k+1}|/t)} (\Lambda (0),\Lambda)d\Lambda\rightarrow 0\textrm{ as }t\rightarrow\infty ,
    \end{align*}
    since $\prod_{j=1}^{k+1}\det (\Lambda_j)^{-\frac{|u_j|}{t}}\rightarrow 1$ uniformly on $\mathcal{T}_{k+1}^{m,\varepsilon}$ and because
    \begin{align*}
        \int_{\mathcal{T}_{k+1}^{m,\varepsilon}} \hat{q}_{2t}^{(m/2+|u_1|/t,\dots ,m/2+|u_{k+1}|/t)} (\Lambda (0),\Lambda)d\Lambda\leq 1.
    \end{align*}

    Furthermore, from equation \eqref{eq:intermidiate-formula-characteristic-function} with the function $F:=\mathds{1}_{(\mathcal{T}_{k+1}^{m,\varepsilon})^c}$ we have that
    \begin{align*}
        \int_{(\mathcal{T}_{k+1}^{m,\varepsilon})^c}  &\frac{\hat{q}_{2t}^{(m/2+|u_1|/t,\dots ,m/2+|u_{k+1}|/t)}(\Lambda (0),\Lambda)}{\prod_{j=1}^{k+1}\det (\Lambda_j)^{|u_j|/(2t)}} d\Lambda
        =\mathbb{E}^u\left( \frac{\mathds{1}_{(\mathcal{T}_{k+1}^{m,\varepsilon})^c} (\Lambda_1(t) ,\dots ,\Lambda_{k+1}(t))}{\prod_{j=1}^{k+1}(\det (\Lambda_j(t)))^{\frac{|u_j|}{2t}}}\right) \\
        &=\frac{e^{m(n-m)\sum_{j=1}^{k+1}|u_j| +\frac{m}{2t}\sum_{1\leq j\neq\ell\leq k+1} |u_ju_{\ell}|}}{\prod_{j=1}^{k+1}(\det (\Lambda_j(0)))^{\frac{|u_j|}{2t}}}\mathbb{E} \left(\frac{\mathds{1}_{(\mathcal{T}_{k+1}^{m,\varepsilon})^c}(\Lambda_1(t),\dots ,\Lambda_{k+1}(t))}{e^{\frac{1}{2}\sum_{j=1}^{k+1} u_j^2\int_0^t(\mathrm{Tr}(\Lambda_j(s))^{-1})-m) ds}}\right) \\
        &\leq \frac{e^{m(n-m)\sum_{j=1}^{k+1}|u_j| +\frac{m}{2t}\sum_{1\leq j\neq\ell\leq k+1} |u_ju_{\ell}|}}{\prod_{j=1}^{k+1}(\det (\Lambda_j(0)))^{\frac{|u_j|}{2t}}}\mathbb{P} ((\Lambda_1(t),\dots ,\Lambda_{k+1}(t))\in (\mathcal{T}_{k+1}^{m,\varepsilon})^c) \\
        &\leq\frac{e^{m(n-m)\sum_{j=1}^{k+1}|u_j| +\frac{m}{2t}\sum_{1\leq j\neq\ell\leq k+1} |u_ju_{\ell}|}}{\prod_{j=1}^{k+1}(\det (\Lambda_j(0)))^{\frac{|u_j|}{2t}}}\sum_{v=1}^{k+1}\mathbb{P} (\Lambda_v(t)<\varepsilon I_m)
    \end{align*}
    for all $t>0$.
    By remark \ref{rmk:marginal-ditribution} $(\Lambda_j (t))_{t\geq 0}$ is a Hermitian Jacobi process of index $(\kappa_j -m/2 ,|\kappa|-\kappa_j-m/2)$.
    It is well known that these processes converge weakly to a complex matrix-variate distribution of index $(\kappa_j +m/2,|\kappa| -\kappa_j +m/2)$ as $t\rightarrow\infty$ \cite{demni2020hermitian}.
    The \emph{(type I) complex matrix-variate Beta distribution} of index $(\alpha ,\beta )$ is given by
    \begin{align*}
        C_{\alpha ,\beta}|\det (\Lambda )|^{\alpha -m} |\det (I_m-\Lambda )|^{\beta -m}
    \end{align*}
    for some normalisation constant $C_{\alpha ,\beta}>0$, see for example \cite[\textsection 5.3.1]{Mathai2022-xz}.
    
    Therefore,
    \begin{align*}
        \limsup_{t\rightarrow\infty}\Biggl| \int_{\mathcal{T}_{k+1}^m}  &\frac{\hat{q}_{2t}^{(m/2+|u_1|/t,\dots ,m/2+|u_{k+1}|/t)}(\Lambda (0),\Lambda)}{\prod_{j=1}^{k+1}\det (\Lambda_j)^{|u_j|/(2t)}} d\Lambda -1\Biggr| \\
        &\qquad\qquad\qquad\qquad\qquad\leq e^{m(n-m)\sum_{j=1}^{k+1}|u_j|}\sum_{\ell =1}^{k+1}\mathbb{P} (B_{\ell }<\varepsilon I_m) ,
    \end{align*}
    where $B_{\ell}$ is a complex matrix-variate Beta distribution of index $(\kappa_j +m/2,|\kappa| -\kappa_j +m/2)$ for $1\leq\ell\leq k+1$.
    Taking $\varepsilon\rightarrow 0$ yields the conclusion.
\end{proof}

\begin{remark}\label{rmk:limit-winding-process}
    By the relation \eqref{eq:fundamental-relation-processes} the winding processes $(\theta (t))_{t\geq 0}$, defined in the proof of corollary \ref{cor:connection-process-BM}, have the same asymptotics as the area processes, since almost surely $\beta (t)/t\rightarrow 0$ as $t\rightarrow\infty$.
\end{remark}

\section{Simultaneous windings on the Stiefel manifold}\label{chap:simulteneous-winding-Stiefel}
In this section, we present an application of our previous results to the study of simultaneous Brownian windings over the Grassmannian $Gr_{n,m}(\mathbb{C})$, i.e. the set of $m$-dimensional complex linear subspaces in $\mathbb{C}^n$, with $n=m(k+1)$ for some $k\geq 1$.
This application is a generalisation of both \cite[\textsection 5]{baudoin2025fullflag} and \cite[\textsection 8.3.3]{Baudoin2024-is}, as an added bonus we obtain an interpretation of the winding process defined in the latter.

\subsection{Winding over the Grassmannian}\label{sec:widing-over-Grassmannian}
As remarked in section \ref{sec:canonical-torus-bundle}, the discussion in that section works for general partial flag manifolds and thus in particular for the Grassmannians.
In this section we will discuss a straightforward modification for the Grassmannian $Gr_{n,m}(\mathbb{C})$, without going into all the details.
Note that the restriction $n=(k+1)m$ is not necessary for this section.
Recall that the Stiefel manifold is defined by
\begin{align*}
    V_{n,m}(\mathbb{C}) =\{ M\in\mathbb{C}^{n\times m}\mid M^*M=I_m\} ,
\end{align*}
and that we can make the identification
\begin{align*}
    V_{n,m}(\mathbb{C})\cong\frac{\mathbf{U}(n)}{\mathbf{U}(n-m)} ,
\end{align*}
where we identify $\mathbf{U}(n-m)$ with the subgroup
\begin{align*}
    \left\{\begin{pmatrix} U & 0 \\ 0 & I_m\end{pmatrix}\in\mathbf{U}(n)\Bigm| U\in\mathbf{U}(n-m)\right\} .
\end{align*}
The Stiefel manifold $V_{n,m}(\mathbb{C})$ can be interpreted as the first $m$-columns of a matrix in $\mathbf{U}(n)$.
The Stiefel fibration
\begin{align*}
    \mathbf{U}(m)\rightarrow V_{n,m}(\mathbb{C})\rightarrow Gr_{n,m}(\mathbb{C})
\end{align*}
is obtained from the action of $\mathbf{U}(m)$ on $V_{n,m}(\mathbb{C})$ given by right multiplication.

The \emph{canonical circle bundle} is defined as the quotient space
\begin{align*}
    P_{n,m}(\mathbb{C}):=V_{n,m}(\mathbb{C})/\mathbf{SU}(m),
\end{align*}
where the action is given by right multiplication.
We obtain the fibration
\begin{align}\label{eq:Boothby-Wang-Grassmannian}
    \mathbf{U}(1)\rightarrow P_{n,m}(\mathbb{C})\rightarrow Gr_{n,m}(\mathbb{C}) ,
\end{align}
which is a realisation of the Boothby-Wang fibration over $Gr_{n,m}(\mathbb{C})$.

The fibration \eqref{eq:Boothby-Wang-Grassmannian} can be locally trivialised by projecting the function
\begin{align}
    \begin{split}\label{eq:trivialisation-torus-bundle-Gr}
        \mathbb{R}\times\mathbb{C}^{(n-m)\times m} &\mapsto\qquad\qquad\hat{V}_{n,m}(\mathbb{C})\\
        (\theta\quad ,\quad M)\qquad &\rightarrow e^{i\frac{\theta}{m}}\begin{pmatrix} M\\ I_m\end{pmatrix}(M^*M+I_m)^{-\frac{1}{2}} ,
    \end{split}
\end{align}
where
\begin{align*}
    \hat{V}_{n,m}(\mathbb{C}) :=\left\{\begin{pmatrix} M \\ N \end{pmatrix}\in V_{n,m}(\mathbb{C})\Bigm| N\in\mathbb{C}^{m\times m},\det (N)\neq 0\right\} ,
\end{align*}
down to an open dense subset of $P_{n,m}(\mathbb{C})$.
Since $\mathbb{C}^{(n-m)\times m}$ is simply connected, the fundamental group of $\hat{P}_{n,m}(\mathbb{C})$ is $\mathbb{Z}$.
We can therefore call the form $d\theta$ on $\hat{P}_{n,m}(\mathbb{C})$ the \emph{winding form} around the set $P_{n,m}(\mathbb{C})\backslash\pi_P(\hat{V}_{n,m}(\mathbb{C}))$.

Let $(\tilde{w}(t))_{t\geq 0}$ be a Brownian motion on $P_{n,m}(\mathbb{C})$.
Similarly to remark \eqref{rmk:winding-number-angular-part}, we see that in distribution
\begin{align*}
    \det (Z(t)) =e^{i\theta (t)}\det ((w^*(t)w(t)+I_m)^{-\frac{1}{2}}) ,
\end{align*}
where $(w(t))_{t\geq 0}$ is the projection of $(\tilde{w}(t))_{t\geq 0}$ down to $Gr_{n,m}(\mathbb{C})$, which is a Brownian motion on $Gr_{n,m}(\mathbb{C})$, $\theta (t):=\int_{\tilde{w}[0,t]}d\theta$ is the \emph{winding process}, and $(W(t),Z(t))_{t\geq 0}$ is a Brownian motion on $V_{n,m}(\mathbb{C})$ with $Z(t)\in\mathbb{C}^{m\times m}$.
This gives an interpretation of the winding number defined in \cite[\textsection 8.3.3]{Baudoin2024-is}.

\subsection{Simultaneous Brownian winding on the Stiefel manifold}
In this section we will again assume $n:=(k+1)m$.
More generally, the group $\mathbf{U}(1)^{k+1}$ acts isometrically on $V_{n,m}(\mathbb{C})$ by
\begin{align*}
    (e^{i\theta_1},\dots ,e^{i\theta_{k+1}})(M_1,\dots ,M_{k+1})^T
    =(e^{i\theta_1} M_1,\dots ,e^{i\theta_{k+1}}M_{k+1})^T ,
\end{align*}
where $M_j\in\mathbb{C}^{m\times m}$ for $1\leq j\leq k+1$.
This yields the following totally geodesic fibration
\begin{align}\label{eq:simulteneous-winding-fibration}
    \mathbf{U}(1)^{k+1}\rightarrow V_{n,m}(\mathbb{C})\rightarrow\frac{\mathbf{U}(n)}{\mathbf{U}(n-m)\times\mathbf{U}(1)^{k+1}} .
\end{align}

Let $\begin{pmatrix} Z_1& \dots &Z_{k+1}\end{pmatrix}^T$ be Brownian motion on the Stiefel manifold, with $Z_j\in\mathbb{C}^{m\times m}$ an invertible matrix for $1\leq j\leq k+1$.
Using other trivialisations of the circle bundle $P_{n,m}(\mathbb{C})$, which are given by \eqref{eq:trivialisation-torus-bundle-Gr} with the identity matrix in rows $(j-1)m+1$ through $jm$ for $1\leq j\leq k+1$ instead of the last $m$ rows and the range adjusted accordingly, we see that the angular parts $(\theta_1 (t),\dots ,\theta_{k+1}(t))_{t\geq 0}$ of
\begin{align*}
    (\det (Z_1(t)),\dots ,\det (Z_{k+1}(t)))_{t\geq 0}
\end{align*}
can be seen as a simultaneous winding numbers on $P_{n,m}(\mathbb{C})\backslash\{\det (Z_j)=0\textrm{ for } 1\leq j\leq k\}$.

To calculate the simultaneous winding number we use the commutative diagram
\begin{equation}\label{eq:diagram-simultaneous-winding} 
\begin{tikzcd}
	{\mathbf{U}(n)} && {F_{m,2m,\dots ,km}(\mathbb C^n)} \\
	\\
	  V_{n,m}(\mathbb{C}) && {\frac{\mathbf{U}(n)}{\mathbf{U}(n-m)\times      \mathbf{U}(1)^{k+1}}}
	\arrow[from=1-1, to=1-3]
	\arrow[from=1-1, to=3-1]
	\arrow[from=1-3, to=3-3]
	\arrow[from=3-1, to=3-3]
\end{tikzcd} ,
\end{equation}
where the action of $\mathbf{U}(n-m)\times\mathbf{SU}(m)^{k+1}$ on $F_{m,2m,\dots ,km}(\mathbb{C}^n)$ is given by
\begin{align*}
    (V,U_1,\dots ,U_{k+1})\left[\begin{pmatrix} M_1 &\dots & M_{k+1}\\ Z_1 & \dots & Z_{k+1}\end{pmatrix}\right] =\left[\begin{pmatrix} VM_1 &\dots & VM_{k+1}\\ U_1Z_1 & \dots & U_{k+1}Z_{k+1}\end{pmatrix}\right]
\end{align*}
and the map $\mathbf{U}(n)\rightarrow V_{n,m}(\mathbb{C})$ is the projection onto the last $m$ rows.

\begin{theorem}\label{thm:Brownian-motion-Stiefel-manifold}
    Let $(w (t))_{t\geq 0}$ be a Brownian motion on $F_{m,2m,\dots ,km}(\mathbb{C}^n)$ with stochastic area process $(\mathfrak{a}(t))_{t\geq 0}$.
    Let $(\beta (t))_{t\geq 0}$ a Brownian motion on $\mathbb{R}^{k+1}$ independent from $(w(t))_{t\geq 0}$.
    The $V_{n,m}(\mathbb{C})$-valued process
    \begin{align*}
        X(t):=\begin{pmatrix} e^{i(\beta_1 (t)+\mathfrak{a}_1(t))}\Theta_1 (t) (w^*_1(t)w_1(t) +I_m)^{-\frac{1}{2}} \\ \vdots \\ e^{i(\beta_{k+1}(t) +\mathfrak{a}_{k+1}(t))}\Theta_{k+1}(t)(w^*_{k+1}(t)w_{k+1}(t) +I_m)^{-\frac{1}{2}}\end{pmatrix}
    \end{align*}
    is a horizontal Brownian motion on $V_{n,m}(\mathbb{C})$ with respect to the fibration \eqref{eq:simulteneous-winding-fibration}, where $(\Theta (t))_{t\geq 0}$ is the $\mathbf{SU}(m)^{k+1}$-valued stochastic process from theorem \ref{thm:horizontal-MB-m}.
\end{theorem}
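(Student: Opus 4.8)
The plan is to transport a process on $\mathbf{U}(n)$ whose law we already control through the commutative diagram \eqref{eq:diagram-simultaneous-winding}, the other three arrows of which have been analysed above. First I would lift $X(t)$ to $\mathbf{U}(n)$. Take the horizontal Brownian motion of Theorem~\ref{thm:horizontal-MB-m}, twist it on the right by the diagonal torus $\mathrm{diag}(e^{i\beta_1(t)}I_m,\dots,e^{i\beta_{k+1}(t)}I_m)$, and call the resulting $\mathbf{U}(n)$-valued process $U(t)$. Its last $m$ rows, read blockwise, are exactly the blocks $e^{i(\mathfrak{a}_j(t)+\beta_j(t))}\Theta_j(t)(I_m+w_j^{*}(t)w_j(t))^{-1/2}$ of $X(t)$; using $\sum_{j}\Lambda_j(t)=I_m$ (the defining relation of the simplex $\mathcal{T}^m_n$) one checks $X(t)^{*}X(t)=I_m$, so $X(t)=\pi_V(U(t))$ where $\pi_V\colon\mathbf{U}(n)\to V_{n,m}(\mathbb{C})$ is the projection onto the last $m$ rows, while $\pi_F(U(t))$ is a Brownian motion on $F_{m,2m,\dots ,km}(\mathbb{C}^n)$, the torus twist being vertical for the flag fibration.

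Second I would identify the law of $U(t)$. By Corollary~\ref{cor:Brownian-motion-torus-bundle} and Remark~\ref{rmk:winding-number-angular-part}, inserting the torus twist promotes the horizontal Brownian motion of Theorem~\ref{thm:horizontal-MB-m} to a process that is horizontal for the finer fibration $\mathbf{SU}(m)^{k+1}\to\mathbf{U}(n)\to P_{m,2m,\dots ,km}(\mathbb{C}^n)$: the factor $\Theta$ kills the $\mathfrak{su}(m)^{k+1}$-vertical part of the increments, the $\beta$-twist supplies the $\mathfrak{u}(1)^{k+1}$-directions, and these are orthogonal to $\mathfrak{su}(m)^{k+1}$. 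Hence the increments of $U(t)$ lie in $(M\mathfrak{su}(m)^{k+1})^{\perp}$, the process $\pi_P(U(t))$ is a Brownian motion on $P_{m,2m,\dots ,km}(\mathbb{C}^n)$, and a fortiori $U(t)$ projects to Brownian motions on $F_{m,2m,\dots ,km}(\mathbb{C}^n)$ and on the base $B:=\mathbf{U}(n)/(\mathbf{U}(n-m)\times\mathbf{U}(1)^{k+1})$.

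Third I would transfer horizontality through the square. All four arrows of \eqref{eq:diagram-simultaneous-winding} are totally geodesic Riemannian submersions, and $\pi_V$ is equivariant for the torus actions — right multiplication by $\mathrm{diag}(e^{i\theta_j}I_m)$ on $\mathbf{U}(n)$ and the block-phase action on $V_{n,m}(\mathbb{C})$ — so it carries the vertical distribution of the canonical torus fibration \eqref{eq:fibration-canonical-torus-bundle} onto the vertical distribution of \eqref{eq:simulteneous-winding-fibration} and horizontal onto horizontal. Decomposing $(M\mathfrak{su}(m)^{k+1})^{\perp}$ along the tower $\mathbf{U}(n)\to V_{n,m}(\mathbb{C})\to B$ into the horizontal space of \eqref{eq:simulteneous-winding-fibration} plus the $\mathfrak{u}(1)^{k+1}$-directions, and applying the Laplacian decomposition of the type used in \eqref{eq:decomposition-Laplace-canonical-torus} now to the Stiefel fibration, one reads off that $\pi_V(U(t))=X(t)$ is a horizontal Brownian motion on $V_{n,m}(\mathbb{C})$ with respect to \eqref{eq:simulteneous-winding-fibration}. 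I expect the main obstacle to be exactly this last bookkeeping: the Stiefel fibration quotients $\mathbf{U}(n)$ on the left by $\mathbf{U}(n-m)$ whereas the flag and torus fibrations quotient on the right, so one has to verify with care that the right-sided statement ``$\mathfrak{su}(m)^{k+1}$-vertical part vanishes'' is exactly what forces $d\pi_V$ of the increments into the horizontal space of \eqref{eq:simulteneous-winding-fibration}, and that the orthogonal splitting above holds fibrewise — this is where the equal-block-size hypothesis and the relations $w_j^{*}w_\ell=-I_m$ for $j\neq\ell$ re-enter.
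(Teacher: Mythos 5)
Your proposal takes essentially the same route as the paper: start from the horizontal Brownian motion on $\mathbf{U}(n)$ supplied by Theorem~\ref{thm:horizontal-MB-m}, push it through the commutative square \eqref{eq:diagram-simultaneous-winding}, and then use a Laplacian decomposition of the form \eqref{eq:decomposition-Laplace-canonical-torus} (applied to the totally geodesic Stiefel fibration) to account for the independent Gaussian twist $\beta$. The paper's proof is more compact: it projects the \emph{untwisted} process of Theorem~\ref{thm:horizontal-MB-m} directly down to $V_{n,m}(\mathbb{C})$, observes via the diagram that this already yields a horizontal Brownian motion for the fibration \eqref{eq:fibration-simultaneous-winding}, and only then invokes $\Delta_{V_{n,m}(\mathbb{C})}=\Delta_{\mathcal{H}'}+\sum_j\partial_{\theta_j}^2$ together with commutativity to insert $\beta$. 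Your detour through the canonical torus bundle $P_{m,2m,\dots,km}(\mathbb{C}^n)$ and the $\mathbf{SU}(m)^{k+1}$ fibration is consistent with Corollary~\ref{cor:Brownian-motion-torus-bundle} but is an extra layer that the paper does not need: the only role the $\mathfrak{u}(1)^{k+1}$ directions play is precisely as the vertical distribution of \eqref{eq:simulteneous-winding-fibration}, so working directly at the level of $V_{n,m}(\mathbb{C})$ avoids some of the $(M\mathfrak{su}(m)^{k+1})^{\perp}$ bookkeeping you flag at the end. That said, your closing caveat identifies exactly the one nontrivial point — the flag and torus quotients act on one side while $\mathbf{U}(n-m)$ acts on the other, so one must check that horizontality for $\mathbf{U}(m)^{k+1}$ (which already excludes the $\mathfrak{u}(1)^{k+1}$ directions) does push forward to horizontality for \eqref{eq:fibration-simultaneous-winding}; the paper passes over this silently, but the diagram \eqref{eq:diagram-simultaneous-winding} together with the explicit left/right action description given just above it is what makes it work. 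Your verification that $X(t)^*X(t)=I_m$ via $\sum_j\Lambda_j=I_m$ is a useful sanity check that the paper omits.
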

\begin{proof}
    Consider the commutative diagram \eqref{eq:diagram-simultaneous-winding} and recall that the map $\mathbf{U}(n)\rightarrow V_{n,m}(\mathbb{C})$ is given by projection onto the last $m$ columns.
    Together with theorem \ref{thm:horizontal-MB-m} this shows that a horizontal Brownian motion on the fibration
    \begin{align}\label{eq:fibration-simultaneous-winding}
        \mathbf{U}(1)^{k+1}\rightarrow V_{n,m}(\mathbb{C})\rightarrow\frac{\mathbf{U}(n)}{\mathbf{U}(n-m)\times \mathbf{U}(1)^{k+1}}
    \end{align}
    is given by
    \begin{align*}
        \begin{pmatrix} e^{i\mathfrak{a}_1(t)}\Theta_1(t)(w^*_1(t)w_1(t) +I_m)^{-\frac{1}{2}} & \dots & e^{i\mathfrak{a}_{k+1}(t)}\Theta_{k+1}(t)(w^*_{k+1}(t)w_{k+1}(t) +I_m)^{-\frac{1}{2}}\end{pmatrix} .
    \end{align*}
    The theorem now follows from the fact that
    \begin{align*}
        \Delta_{V_{n,m}(\mathbb{C})} =\Delta_{\mathcal{H}'} +\sum_{j=1}^{k+1}\frac{\partial^2}{\partial\theta_j^2},
    \end{align*}
    where $\Delta_{\mathcal{H}'}$ is the horizontal Laplacian of the fibration \eqref{eq:fibration-simultaneous-winding} and from the fact that both parts commute, since the fibration is totally geodesic.
\end{proof}

\begin{theorem}\label{thm:simulteneous-winding-Stiefel-manifold}
     Let $\mu\in\mathbb{R}^n_{>0}$ and let $(Z(t))_{t\geq 0} =(Z_1 (t),\dots ,Z_{k+1}(t))_{t\geq 0}$ be a Brownian motion on $V_{n,m}(\mathbb{C})$ such that $Z_j(0)\in\mathbb{C}^{m\times m}\backslash\{ 0\}$ for $1\leq j\leq k+1$.
     Let $(\eta_1 (t),\dots ,\eta_{k+1} (t))_{t\geq 0}$ be a continuous stochastic process such that
     \begin{align*}
         \det (Z_j(t)) =|\det (Z_j(t))|e^{i\eta_j (t)}\textrm{ for } 1\leq j\leq k+1.
     \end{align*}
    The following convergence holds in distribution
    \begin{align*}
        \frac{1}{t}(\eta_1 (t),\dots ,\eta_{k+1} (t))\rightarrow (C_{m(n-m)}^1,\dots , C_{m(n-m)}^{k+1})
    \end{align*}
    as $t\rightarrow\infty$, where $C_{m(n-m)}^1,\dots ,C_{m(n-m)}^{k+1}$ are independent Cauchy random variables with parameter $m(n-m)$.
\end{theorem}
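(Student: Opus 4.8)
The plan is to express the given Stiefel Brownian motion in terms of the horizontal Brownian motion constructed in Theorem~\ref{thm:Brownian-motion-Stiefel-manifold}, to read off the continuous arguments $\eta_j$ of the block determinants, and then to transport the asymptotics of the stochastic area process from Corollary~\ref{cor:limit-area-process-on-Fm}.

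Step 1 (reduction to the horizontal model). Since the fibration \eqref{eq:simulteneous-winding-fibration} is totally geodesic, as already used in the proof of Theorem~\ref{thm:Brownian-motion-Stiefel-manifold} one has $\Delta_{V_{n,m}(\mathbb{C})}=\Delta_{\mathcal{H}'}+\sum_{j=1}^{k+1}\partial^2/\partial\theta_j^2$ with the two summands commuting, so the heat semigroup on $V_{n,m}(\mathbb{C})$ factorises accordingly. Hence the given $(Z(t))_{t\geq 0}$ has, in distribution, the form $Z_j(t)=e^{ic\gamma_j(t)}X_j(t)$ for $1\leq j\leq k+1$, where $(X(t))_{t\geq 0}$ is the horizontal Brownian motion of Theorem~\ref{thm:Brownian-motion-Stiefel-manifold}, $(\gamma(t))_{t\geq 0}$ is an $\mathbb{R}^{k+1}$-valued Brownian motion independent of $(X(t))_{t\geq 0}$, and $c>0$ is the (irrelevant) length of the Killing fields $\partial/\partial\theta_j$; here $e^{ic\gamma_j}$ scales the $j$-th $m\times m$ block. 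One also checks that almost surely $\det Z_j(t)\neq 0$ for every $t>0$ (the locus $\{\det Z_j=0\}$ is polar, being of real codimension two), so that the continuous determination $\eta_j$ postulated in the statement exists and is insensitive to a possibly degenerate $Z_j(0)$, and any two such determinations differ by an additive constant.

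Step 2 (the determinants). Writing $X_j(t)=e^{i(\beta_j(t)+\mathfrak{a}_j(t))}\Theta_j(t)(w_j^*(t)w_j(t)+I_m)^{-1/2}$ as in Theorem~\ref{thm:Brownian-motion-Stiefel-manifold} and taking determinants, the factor $\Theta_j(t)\in\mathbf{SU}(m)$ drops out and $(w_j^*w_j+I_m)^{-1/2}$ is Hermitian positive definite; exactly as in Remark~\ref{rmk:winding-number-angular-part} this shows that $\det Z_j(t)$ is a positive real number times a phase whose continuous argument is, up to an additive constant, $\eta_j(t)=\mathfrak{a}_j(t)+R_j(t)$, where $R_j(t)$ is the real Brownian motion assembled from $\gamma_j$ and $\beta_j$ (carrying whatever normalisation the conventions of Theorem~\ref{thm:Brownian-motion-Stiefel-manifold} produce; Remark~\ref{rmk:winding-number-angular-part} together with Remark~\ref{rmk:limit-winding-process} pins this down so that $\mathfrak{a}_j$ enters with coefficient one). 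Equivalently, this is the analogue for the fibration \eqref{eq:simulteneous-winding-fibration} of the relation \eqref{eq:fundamental-relation-processes} coming from Corollary~\ref{cor:connection-process-BM}, the $\eta_j$ being precisely the simultaneous winding numbers described just before the statement. In particular $R_j(t)/t\to 0$ almost surely, by the law of large numbers for Brownian motion.

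Step 3 (conclusion) and the main obstacle. By Corollary~\ref{cor:limit-area-process-on-Fm}, $t^{-1}\mathfrak{a}(t)$ converges in distribution as $t\to\infty$ to $(C^1_{m(n-m)},\dots,C^{k+1}_{m(n-m)})$ with independent components; since $t^{-1}R_j(t)\to 0$ and $t^{-1}(\mathrm{const})\to 0$ almost surely, Slutsky's theorem gives the joint convergence of $t^{-1}(\eta_1(t),\dots,\eta_{k+1}(t))$ to the same limit, which is the claim. I expect the delicate point to be Step 2: one must track, through the $\mathbf{SU}(m)^{k+1}$-versus-$\mathbf{U}(1)^{k+1}$ splitting of the structure group, that the only part of $\arg\det Z_j(t)$ surviving at the ballistic scale $t$ is the stochastic area $\mathfrak{a}_j$ on $F_{m,2m,\dots,km}(\mathbb{C}^n)$ (with the correct normalisation), and verify that the continuous branch $\eta_j$ postulated in the statement indeed coincides with the winding number read off the lift in Theorem~\ref{thm:Brownian-motion-Stiefel-manifold}; once this identification is secured, Steps 1 and 3 are routine.
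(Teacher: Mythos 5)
Your proposal is correct and is essentially the same argument as the paper's: read off $\eta_j(t)=\mathfrak{a}_j(t)+(\text{a real Brownian motion})$ from the explicit lift in Theorem~\ref{thm:Brownian-motion-Stiefel-manifold} (using $\det\Theta_j=1$ and positivity of $(w_j^*w_j+I_m)^{-1/2}$), then kill the Brownian term at scale $t$ by the strong law and invoke Corollary~\ref{cor:limit-area-process-on-Fm}. The only genuine deviation is your extra factor $e^{ic\gamma_j(t)}$ in Step~1: the process $X(t)$ of Theorem~\ref{thm:Brownian-motion-Stiefel-manifold}, because it already carries the vertical Brownian motion $\beta$, is (as its proof makes clear via the splitting $\Delta_{V_{n,m}(\mathbb{C})}=\Delta_{\mathcal{H}'}+\sum_j\partial_{\theta_j}^2$) a full Brownian motion on $V_{n,m}(\mathbb{C})$, not merely a horizontal one, so no additional vertical noise is needed; your reading appears prompted by the word ``horizontal'' in that theorem's statement. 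Since your $\gamma_j$ only enters the negligible remainder $R_j(t)/t\to 0$, this does not affect the conclusion. The delicate identification you flag in Step~2 (that the continuous branch $\eta_j$ agrees, up to an additive constant, with $\beta_j+\mathfrak{a}_j$) is exactly the content of the one-line citation of Theorem~\ref{thm:Brownian-motion-Stiefel-manifold} in the paper's proof, and your sketch of why it holds ($\mathbf{SU}(m)$-determinant is~$1$, Hermitian positive factor contributes no phase, continuity pins down the branch) is the right justification.
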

\begin{proof}
    By theorem \ref{thm:Brownian-motion-Stiefel-manifold}, one has in distribution
    \begin{align*}
        (\eta_1 (t),\dots ,\eta_{k+1}(t)) =(\beta_1(t) +\mathfrak{a}_1(t),\dots ,\beta_{k+1}(t) +\mathfrak{a}_{k+1}(t)) ,
    \end{align*}
    where $(\mathfrak{a} (t))_{t\geq 0}$ is the stochastic area process of a Brownian motion on $F_{m,2m,\dots ,km}(\mathbb{C}^n)$ and $(\beta (t))_{t\geq 0}$ is an independent Brownian motion on $\mathbb{R}^{k+1}$.
    Since $\frac{1}{t}\beta (t)$ almost surely converges to $0$ as $t\rightarrow\infty$, the result follows from corollary \ref{cor:limit-area-process-on-Fm}.
\end{proof}

\begin{remark}
    Just like in \cite[\textsection 5]{baudoin2025fullflag}, the above limit theorem also holds for the canonical variation of the fibration \eqref{eq:fibration-simultaneous-winding}, which we will now briefly describe.
    Let $\frac{\partial}{\partial\theta_1} ,\dots ,\frac{\partial}{\partial\theta_{k+1}}$ be the generators of the corresponding group action.
    We can decompose $g_{V_{n,m}(\mathbb{C})} =g_{\mathcal{H}}\oplus g_{\mathcal{V}}$, where $\mathcal{V}$ is the subbundle spanned by the generators $\frac{\partial}{\partial\theta_j}$ for $1\leq j\leq k+1$ of the group action and $\mathcal{H}$ its orthogonal complement.
    Given $\mu =(\mu_1,\dots ,\mu_{k+1} )$ with $\mu_j >0$ for $1\leq j\leq k+1$, define $g^{\mu}_{\mathcal{V}}\left(\frac{\partial}{\partial\theta_{\ell}} ,\frac{\partial}{\partial\theta_{j}}\right) :=(\mu_{\ell}\mu_{j})^{-1}\delta_{\ell j}$.
    The canonical variation of the fibration \eqref{eq:fibration-simultaneous-winding} is $V_{n,m}(\mathbb{C})$ together with the metric $g_{\mu} :=g_{\mathcal{H}}\oplus g^{\mu}_{\mathcal{V}}$.
\end{remark}

\bibliographystyle{plain}
\bibliography{reference.bib}

@misc{baudoin2025fullflag,
      title={Brownian motion and stochastic areas on complex full flag manifolds}, 
      author={Fabrice Baudoin and Nizar Demni and Teije Kuijper and Jing Wang},
      year={2025},
      eprint={2504.09124},
      archivePrefix={arXiv},
      primaryClass={math.PR},
      url={https://arxiv.org/abs/2504.09124}, 
}

@BOOK{Baudoin2024-is,
  title     = "Stochastic areas, horizontal Brownian motions, and hypoelliptic
               heat kernels",
  author    = "Baudoin, Fabrice and Demni, Nizar and Wang, Jing",
  publisher = "EMS Press",
  month     =  aug,
  year      =  2024,
  language  = "en"
}

@misc{baudoin2024lawindexbrownianloops,
      title={On the law of the index of Brownian loops related to the Hopf and anti-de Sitter fibrations}, 
      author={Fabrice Baudoin and Teije Kuijper},
      year={2024},
      eprint={2410.06824},
      archivePrefix={arXiv},
      primaryClass={math.PR},
      url={https://arxiv.org/abs/2410.06824}, 
}

@article{Baudoin2011TheSH,
  title={The subelliptic heat kernel on the CR sphere},
  author={Fabrice Baudoin and Jing Wang},
  journal={Mathematische Zeitschrift},
  year={2011},
  volume={275},
  pages={135-150},
  url={https://api.semanticscholar.org/CorpusID:119658757}
}

@article{Baudoin2021,
author = {Fabrice Baudoin and Jing Wang},
title = {{Asymptotic windings of the block determinants of a unitary Brownian motion and related diffusions}},
volume = {26},
journal = {Electronic Journal of Probability},
number = {none},
publisher = {Institute of Mathematical Statistics and Bernoulli Society},
pages = {1 -- 21},
keywords = {asymptotic stochastic area, asymptotic windings, block determinants, Brownian motion of complex Grassmannian manifold, Stiefel Brownian motion},
year = {2021},
doi = {10.1214/21-EJP600},
URL = {https://doi.org/10.1214/21-EJP600}
}

@article{Boothby-Wang1958,
 ISSN = {0003486X, 19398980},
 URL = {http://www.jstor.org/stable/1970165},
 author = {W. M. Boothby and H. C. Wang},
 journal = {Annals of Mathematics},
 number = {3},
 pages = {721--734},
 publisher = {[Annals of Mathematics, Trustees of Princeton University on Behalf of the Annals of Mathematics, Mathematics Department, Princeton University]},
 title = {On Contact Manifolds},
 urldate = {2024-12-06},
 volume = {68},
 year = {1958}
}

@article {Borel1954,
    AUTHOR = {Borel, Armand},
     TITLE = {K\"{a}hlerian coset spaces of semisimple {L}ie groups},
   JOURNAL = {Proc. Nat. Acad. Sci. U.S.A.},
  FJOURNAL = {Proceedings of the National Academy of Sciences of the United
              States of America},
    VOLUME = {40},
      YEAR = {1954},
     PAGES = {1147--1151},
      ISSN = {0027-8424},
   MRCLASS = {17.0X},
  MRNUMBER = {77878},
MRREVIEWER = {A. Lichnerowicz},
       DOI = {10.1073/pnas.40.12.1147},
       URL = {https://doi-org.ezproxy.lib.uconn.edu/10.1073/pnas.40.12.1147},
}

@article{demni2020hermitian,
  title={The Hermitian Jacobi Process: A Simplified Formula for the Moments and Application to Optical Fiber MIMO Channels},
  author={Demni, Nizar and Hamdi, Tarek and Souissi, Abdessatar},
  journal={Functional Analysis and Its Applications},
  volume={54},
  number={4},
  pages={257--271},
  year={2020},
  publisher={Springer}
}

@PHDTHESIS{doume2005,
url = "http://www.theses.fr/2005TOU30121",
title = "Matrices aléatoires, processus stochastiques et groupes de réflexions",
author = "Doumerc, Yan",
year = "2005",
pages = "1 vol. ( 239 p.)",
note = "2005TOU30121",
school = "UNIVERSITE TOULOUSE III",
}

@article{Heinosaari2020,
    author = {Heinosaari, Teiko and Jivulescu, Maria Anastasia and Nechita, Ion},
    title = {Random positive operator valued measures},
    journal = {Journal of Mathematical Physics},
    volume = {61},
    number = {4},
    pages = {042202},
    year = {2020},
    month = {04},
    abstract = {We introduce several notions of random positive operator valued measures (POVMs), and we prove that some of them are equivalent. We then study statistical properties of the effect operators for the canonical examples, starting from the limiting eigenvalue distribution. We derive the large system limit for several quantities of interest in quantum information theory, such as the sharpness, the noise content, and the probability range. Finally, we study different compatibility criteria, and we compare them for generic POVMs.},
    issn = {0022-2488},
    doi = {10.1063/1.5131028},
    url = {https://doi.org/10.1063/1.5131028},
    eprint = {https://pubs.aip.org/aip/jmp/article-pdf/doi/10.1063/1.5131028/13959186/042202\_1\_online.pdf},
}

@book{heinosaari2011mathematical,
  title={The mathematical language of quantum theory: from uncertainty to entanglement},
  author={Heinosaari, Teiko and Ziman, M{\'a}rio},
  year={2011},
  publisher={Cambridge University Press}
}

@inproceedings {Levy1951,
    AUTHOR = {L\'{e}vy, Paul},
     TITLE = {Wiener's random function, and other {L}aplacian random
              functions},
 BOOKTITLE = {Proceedings of the {S}econd {B}erkeley {S}ymposium on
              {M}athematical {S}tatistics and {P}robability, 1950},
     PAGES = {171--187},
 PUBLISHER = {University of California Press, Berkeley-Los Angeles, Calif.},
      YEAR = {1951},
   MRCLASS = {60.0X},
  MRNUMBER = {0044774},
MRREVIEWER = {R. Fortet},
}

@BOOK{Mathai2022-xz,
  title     = "Multivariate statistical analysis in the real and complex
               domains",
  author    = "Mathai, Arak M and Provost, Serge B and Haubold, Hans J",
  abstract  = "This book explores topics in multivariate statistical analysis,
               relevant in the real and complex domains. It utilizes simplified
               and unified notations to render the complex subject matter both
               accessible and enjoyable, drawing from clear exposition and
               numerous illustrative examples. The book features an in-depth
               treatment of theory with a fair balance of applied coverage, and
               a classroom lecture style so that the learning process feels
               organic. It also contains original results, with the goal of
               driving research conversations forward. This will be
               particularly useful for researchers working in machine learning,
               biomedical signal processing, and other fields that increasingly
               rely on complex random variables to model complex-valued data.
               It can also be used in advanced courses on multivariate
               analysis. Numerous exercises are included throughout.",
  publisher = "Springer Nature",
  month     =  oct,
  year      =  2022,
  address   = "Cham, Switzerland",
  language  = "en"
}

\end{document}